\theoremstyle{definition}
\newtheorem{thm}{Theorem}[chapter]
\newtheorem{cor}[thm]{Corollary}
\newtheorem{lem}[thm]{Lemma}
\newtheorem{prop}[thm]{Proposition}
\newtheorem{dfn}[thm]{Definition}
\newtheorem{probl}{Problem}[chapter]
\patchcmd{\part}{\thispagestyle{plain}}{\thispagestyle{empty}}
  {}{\errmessage{Cannot patch \string\part}}
\newsavebox\myboxA
\newsavebox\myboxB
\newlength\mylenA
\newcommand*\xoverline[2][0.75]{%
    \sbox{\myboxA}{$\m@th#2$}%
    \setbox\myboxB\null
    \ht\myboxB=\ht\myboxA%
    \dp\myboxB=\dp\myboxA%
    \wd\myboxB=#1\wd\myboxA
    \sbox\myboxB{$\m@th\overline{\copy\myboxB}$}
    \setlength\mylenA{\the\wd\myboxA}
    \addtolength\mylenA{-\the\wd\myboxB}%
    \ifdim\wd\myboxB<\wd\myboxA%
       \rlap{\hskip 0.35\mylenA\usebox\myboxB}{\usebox\myboxA}%
    \else
        \hskip -0.5\mylenA\rlap{\usebox\myboxA}{\hskip 0.5\mylenA\usebox\myboxB}%
    \fi}
\newcommand{\smallfrac}[2]{\scalebox{1.35}{\ensuremath{\frac{#1}{#2}}}}
\newcommand{\medfrac}[2]{\scalebox{1.2}{\ensuremath{\frac{#1}{#2}}}}
\newcommand{\textfrac}[2]{{\textstyle\ensuremath{\frac{#1}{#2}}}}
\newcounter{fig}[chapter]
\newenvironment{fig}[1][]{\refstepcounter{fig}\par\medskip
   \textbf{\small Figure~\thefig. #1}\small}{}
\newcounter{tab}[chapter]
\newenvironment{tab}[1][]{\refstepcounter{tab}\par\medskip
   \textbf{\small Table~\thetab. #1}\small}{}
\newlength{\depthofsumsign}
\renewcommand{\epsilon}{\varepsilon}
\newcommand{\e}{\operatorname{e}}
\newcommand{\B}{\operatorname{B}}
\newcommand{\Bbar}{\xoverline[0.75]{\operatorname{B}}}
\newcommand{\pr}{\operatorname{pr}}
\newcommand{\dd}{\operatorname{d}\hspace{-1pt}}
\renewcommand{\P}{\operatorname{P}}
\renewcommand{\L}{\operatorname{L}}
\newcommand{\E}{\operatorname{E}}
\newcommand{\V}{\operatorname{V}}
\newcommand{\Cov}{\operatorname{Cov}}
\DeclareMathSymbol{\shortminus}{\mathbin}{AMSa}{"39}
\newcommand{\Bigsum}[2]{\ensuremath{\mathop{\textstyle\sum}_{#1}^{#2}}}
\newcommand{\Conv}{\mathop{\scalebox{1.1}{\raisebox{-0.08ex}{$\ast$}}}}%
\pgfplotsset{compat=1.10}
\newcommand{\filledsquare}{\begin{picture}(0,0)(0,0)\put(-4,1.4){$\scriptscriptstyle\text{\ding{110}}$}\end{picture}\hspace{2pt}}
\renewcommand{\thefig}{\arabic{chapter}.\arabic{fig}}
\renewcommand{\thetab}{\arabic{chapter}.\arabic{tab}}
\begin{document}\thispagestyle{empty}
\allowdisplaybreaks
$ $
\vspace{-10pt}

\begin{center}
{\Huge \bf Lecture Notes on\vspace{8pt}\\High-Dimensional Data}

\vspace{20pt}

\large September 12, 2024

\vspace{20pt}

\Large \phantom{1}Sven-Ake Wegner\hspace{0.5pt}\MakeLowercase{$^{\text{1}}$}
\end{center}

\hspace{1000pt}\footnote{Department of Mathematics, University of Hamburg, Bundesstra\ss{}e 55, 20146 Hamburg, Ger-\linebreak{}\phantom{x}\hspace{12pt}many, e-mail: sven.wegner@uni-hamburg.de}

\newpage\thispagestyle{empty}
$ $

\vspace{400pt}

{\Huge\bf Preface}

\vspace{20pt}

The text below arose from a course on `Mathematical Data Science' that I taught twice for final year BSc Mathematics students in the UK between 2019 and 2020. The notes presently cover the first part (roughly a third) of the course focussing on the characteristics and peculiarities of high-dimensional data. The material is based on Blum, Hopcroft, Kannan \cite{BHK20}, Dasgupta, Schulman \cite{DS}, K\"oppen \cite{K00}, Mahoney \cite{M13}, Pinelis \cite{P20} and Vershynin \cite{V20}. An improved version of the notes appeared as part of the textbook \cite{Weg24a}; we refer the reader in particular to \cite[Chapters 8\hspace{1.5pt}--12]{Weg24a}. I would like to thank my former students who attended the course and helped me with their feedback to write these lecture notes.

\medskip
Hamburg, September 12, 2024

\medskip

Sven-Ake Wegner



\chapter{The curse of high dimensions}\label{Ch-INTRO}

Mathematically, data is a subset $A\subseteq\mathbb{R}^d$ where the dimension $d$ is typically a very large number. Concrete examples are as follows.

\smallskip

\begin{compactitem}

\item[1.] A greyscale picture with $320\times240$ pixel can be described by $75\,500$ numbers between $0$\,(=white) and $1$\,(=black) and this gives a vector in $\mathbb{R}^{75\,500}$.

\vspace{3pt}

\item[2.] The current Oxford dictionary counts $273\,000$ English words. An arbitrary English text can be seen as a vector in $\mathbb{R}^{273\,000}$ where each entry is the number of occurrences of each word in the text.

\vspace{3pt}

\item[3.] Netflix has approximately $3\,500$ movies and $200$ million registered users. Each user can give a rating from one to five stars for each movie. We can represent each user by her/his rating vector in $\mathbb{R}^{3\,500}$ containing in each entry the user's rating or a zero if the movie has not been rated. Alternatively, we can represent each movie  by a vector in $\mathbb{R}^{200\,000\,000}$ containing in each entry the ratings of the corresponding user.

\vspace{3pt}

\item[4.] Amazon offers $353$ million items for sale. We can represent any of its customers as an element of $\mathbb{R}^{353\,000\,000}$ by counting the amount of purchases per product.

\vspace{3pt}

\item[5.] The human genome has approximately $10\,000$ genes, so every of the 9 billion humans can be considered as a point in $\mathbb{R}^{10\,000}$.

\vspace{3pt}

\item[6.] When doing medical diagnostic tests, we can represent a subject by the vector containing her/his results. These can include integers like antibody counts, real numbers like temperature, pairs of real numbers like blood pressure, or binary values like if a subject has tested positive or negative for a certain infection.

\vspace{3pt}

\item[7.] Facebook has $2.7$ billion users. If we name the users $1,2,3,\dots$, we can represent user $j$ in $\mathbb{R}^{2\,700\,000\,000}$ by a vector containing a one in the $i$-th entry if $i$ and $j$ are friends and a zero if they are not.

\end{compactitem}

\smallskip

Given such a high-dimensional data set $A$, classical tasks to analyze the data, or make predictions based on it, involve to compute distances between data points. This can be for example
\begin{compactitem}
\item[\filledsquare] the classical euclidean distance (or any other $p$-norm),
\item[\filledsquare] a weighted metric that, e.g., in 6.\ weights every medical information differently,
\item[\filledsquare] a distance that is not even a metric like for instance the cosine similarity.
\end{compactitem}
Related tasks are then to determine the k-nearest neighbors within the data set $A$  of a given point $x\in\mathbb{R}^d$, to cover the set via $A\subseteq\cup_{b\in B}\B_r(b)$ or $A\subseteq\Gamma(B)$ with $B\subset A$ as small as possible, to find a cluster structure, to determine and fit a distribution function, or to detect a more complicated hidden pattern in the set.

\smallskip

From the perspective of classical undergraduate courses like Real Analysis, Linear Algebra and Measure Theory, the space $\mathbb{R}^d$, endowed with euclidean norm, appears well understood: Distances, balls, convex hulls, minimization problems, matrices and volume are well studied in finite dimensions. However, if $d$ is very large, we are faced with the following two obstructions. Firstly, the computation of the aforementioned quantities can be unfeasable in view of the sheer amount of data. Secondly, natural (and very intuitive!) probability distributions such as the uniform distribution $\mathcal{U}(B)$ on the hypercube $B=[-1,1]^d$ or unit ball $B=\B_1(0)\subseteq\mathbb{R}^d$ or the Gaussian distribution $\mathcal{N}(0,1)$ in $d$ dimensions, display several very counterintuitive effects if $d$ is large. This  strongly affects methods that would help to overcome the first obstruction (e.g., drawing a random sample instead of using the whole data set) and has also to be taken into account if we are given empirical data which is subject to errors or noise that we wish to model by a certain probability distribution.

\smallskip

We begin by recalling the following definition.

\begin{dfn}\label{DFN-DISTR} Let $(\Omega,\Sigma,\P)$ be a probability space, let $B\subseteq\mathbb{R}$ be a Borel set and let $X\colon\Omega\rightarrow\mathbb{R}^d$ be a random vector.
\begin{compactitem}

\item[(i)] We say that $X$ is \emph{uniformly distributed on $B\subseteq\mathbb{R}^d$} if
$$
\P\bigl[X\in A\bigr] = {\medfrac{\lambda^d(A\hspace{1pt}\cap\hspace{1pt}B)}{\lambda^d(B)}}
$$
holds for every Borel set $A\subseteq\mathbb{R}^d$. We write $X\sim\mathcal{U}(B)$ in this case and note that below we will focus on the cases of the \emph{hypercube} $B=[-1,1]^d$ and the \emph{(closed) unit ball} $B=\Bbar_1(0)=\{x\in\mathbb{R}^d\:;\:\|x\|_2\leqslant1\}$. In the latter case we can as well drop the closure since $\{x\in\mathbb{R}^d\:;\:\|x\|_2=1\}$ has measure zero.

\vspace{3pt}

\item[(ii)] We say that $X$ is \emph{(spherically) Gaussian distributed with mean zero and variance $\sigma>0$}, if
$$
\P[X\in A] = {\medfrac{1}{(2\pi\sigma^2)^{d/2}}} \int_A\exp\bigl({-\medfrac{\|x\|^2}{2\sigma^2}}\bigr)\dd\lambda^d(x)
$$
holds for every Borel set $A\subseteq\mathbb{R}^d$. We write $X\sim\mathcal{N}(0,\sigma^2,\mathbb{R}^d)$ in this case and note that a straightforward computation (see Proposition \ref{PROP-9-1}) shows that $X=(X_1,\dots,X_d)\sim\mathcal{N}(0,\sigma^2,\mathbb{R}^d)$ holds if and only if $X_i\sim\mathcal{N}(0,\sigma^2,\mathbb{R}^1)$ holds for all $i=1,\dots,d$.
\end{compactitem}
\end{dfn}

We start now with the case of the $d$-dimensional hypercube $H_d:=\{(x_1,\dots,x_d)\in\mathbb{R}^d\:;\:x_i\in[-1,1] \text{ for all } 1\leqslant i\leqslant d\}$. Not surprisingly, $H_d$ is a compact and absolutely convex set. Consider now the `corner' $e=(1,1,\dots,1)\in H_d$. Then
$$
\|e\|_2=\sqrt{1^2+1^2+\cdots+1^2}=\sqrt{d}\xrightarrow{d\rightarrow\infty}\infty
$$
and this will hold for `more and more corners' when $d$ increases. In particular, the mutual distance between `many' corners increases with $d$. On the other hand, if we pick two points $e_i=(0,\dots,0,1,0,\dots,0)$ and $f_j=(0,\dots,0,1,0,\dots,0)$ in the middle of two different `faces' then their distances
$$
\|e_i-e_j\|_2=\sqrt{1^2+1^2}=\sqrt{2}
$$
remains constant. This is in particular true for opposite faces, where $\|e_i-(-e_i)\|_2=2$. The aforementioned computations paint a picture in which we could imagine the hypercube as an object where the corners `spike out' whereas the centers of the faces stay at constant distance to each other. If distances behave non homogeneously, then we should be prepared that volume is distributed inhomogeneous, too. To see this, let $0<\epsilon<1$ and consider the subset
$$
S_{\epsilon,d}=H_d\backslash{}(1-\epsilon)H_d
$$
of $H_d$ which can be thought of an $\epsilon$-thin outer shell of the hypercube.

\begin{center}
\begin{picture}(300,65)(0,0)

\put(40,20){\begin{tikzpicture}[scale=0.5]

\coordinate (1) at (0,0);
 \coordinate (2) at (3,0);

 \coordinate (a) at (0.2,0);
 \coordinate (b) at (2.8,0);

    \draw[thick] (1)--(2);
    
        \draw[black!50!white, thick] (a)--(b);
    
    \node at (3,1) {\small$S_{\epsilon,1}$};
      \draw (0.1,0.1)--(2.3,0.7); 
      \draw (2.9,0.1)--(2.5,0.7); 
    
\end{tikzpicture}}

\put(124,8){\begin{tikzpicture}[scale=0.5]

\coordinate (1) at (0,0);
 \coordinate (2) at (3,0);
 \coordinate (3) at (3,3);
 \coordinate (4) at (0,3);
 
 \coordinate (a) at (0.2,0.2);
 \coordinate (b) at (2.8,0.2);
 \coordinate (c) at (2.8,2.8);
 \coordinate (d) at (0.2,2.8);

    \draw[thick] (1)--(2)--(3)--(4)--cycle;
    
        \draw[black!50!white, thick] (a)--(b)--(c)--(d)--cycle;

    \node at (2.7,3.6) {\small$S_{\epsilon,2}$};
      \draw (1.2,2.9)--(2,3.4);  
          
\end{tikzpicture}}

\put(208,0){\begin{tikzpicture}[scale=0.5]

 \coordinate (1) at (0,0);
 \coordinate (2) at (2.4,-0.6);
 \coordinate (3) at (2.4,1.8);
 \coordinate (4) at (0,2.4);
 \coordinate (5) at (4.4,0.8);
 \coordinate (6) at (4.4,3.2);
 \coordinate (7) at (2,3.8);
 \coordinate (8) at (2,1.4);
 
\tkzInterLL(7,8)(3,4) \tkzGetPoint{9}
\tkzInterLL(5,8)(3,2) \tkzGetPoint{10}

 \draw[thick,draw=black] (1)--(8);

    \draw[thick] (6)--(7)--(4);
 
 \draw [thick, shorten >= 1pt] (8)--(9);
 \draw [thick, shorten <= 1pt] (9)--(7);
  \draw [thick, shorten >= 1pt] (8)--(10);
 \draw [thick, shorten <= 1pt] (10)--(5);

 \coordinate (a) at (0.17,0.07+0.13);
 \coordinate (b) at (2.35,-0.49+0.13);
 \coordinate (c) at (2.35,1.7);
 \coordinate (d) at (0.17,2.26);
 \coordinate (e) at (4.2,0.78+0.13);
 \coordinate (f) at (4.2,3);
 \coordinate (g) at (1.96,3.56);
 \coordinate (h) at (1.96,1.33+0.13);

\tkzInterLL(g,h)(c,d) \tkzGetPoint{i}
\tkzInterLL(h,e)(b,c) \tkzGetPoint{j}

\draw [thick, black!50!white] (a)--(b)--(c)--(d)--cycle;
\draw [thick, black!50!white] (b)--(e)--(f)--(c);
 \draw [thick, black!50!white] (f)--(g)--(d);
  \draw [thick, black!50!white] (h)--(a);

    \draw[thick, black!50!white, shorten >=1pt] (g)--(i);
     \draw[thick, black!50!white, shorten <=1pt] (i)--(h);
     
      \draw[thick, black!50!white, shorten >=1pt] (h)--(j);
     \draw[thick, black!50!white, shorten <=1pt] (j)--(e);   
     
     \draw[thick] (1)--(2)--(3)--(4)--cycle;
       \draw[thick] (2)--(5)--(6)--(3);
       
      \node at (4.2,4.2) {\small$S_{\epsilon,3}$};
      \draw (2.8,3.5)--(3.5,4);      
\end{tikzpicture}}

\end{picture}
\begin{fig}$S_{\epsilon,d}$ for $d=1,2,3$.\end{fig}
\end{center}

Since $(1-\epsilon)H_d=[-1+\epsilon,1-\epsilon]^d$ is a cuboid, we can easily compute
$$
\medfrac{\lambda^d(S_{\epsilon,d})}{\lambda^d(H_{d})}=\medfrac{\lambda^d(H^d)-\lambda^d((1-\epsilon)H_d)}{\lambda^d(H_{d})}=\medfrac{2^d-((2(1-\epsilon))^d}{2^d}=1-(1-\epsilon)^d\xrightarrow{d\rightarrow\infty}1
$$
for $0<\epsilon<1$. This means that most of the volume of the hypercube is located close to its surface. Let us mention that the same holds true if we consider the hypercube $[0,1]^d$ with side length one. Then for its volume we get $\lambda^d([0,1]^d)=1$ and the volume in the $\epsilon$-thick shell equals $\lambda^d([0,1]^d\backslash[\epsilon,1-\epsilon]^d)=1-(1-2\epsilon)^d$ which again converges to $1$. So also here most of the volume lies in an $\epsilon$-thin shell below its surface. On the other hand not much of the volume can be located close to the corners: Consider the corner $e=(e_1,\dots,e_d)$ with $e_i\in\{1,-1\}$ of $H_d=[-1,1]^d$. Then we consider
$$
E_{e}:=\{x\in\mathbb{R}^d\:;\: x_i\in[1-\epsilon,1] \text{ if } e_i=1 \text{ and } x_i\in[-1,-1+\epsilon] \text{ if } e_i=-1 \}
$$
that is a cube with side length $\epsilon$ sitting in the $e$-th corner of $H_d$ 

\begin{center}
\hspace{40pt}\begin{tikzpicture}[scale=0.4]

\coordinate (1) at (0,0);
\coordinate (2) at (3,0);
\coordinate (3) at (3,3);
\coordinate (4) at (0,3);
 
\coordinate (a) at (0.8,0.8);
\coordinate (a1) at (0.8,0);
\coordinate (a2) at (0,0.8);
   
\coordinate (b) at (2.2,0.8);
\coordinate (b1) at (2.2,0);
\coordinate (b2) at (3,0.8);

\coordinate (c) at (2.2,2.2);
\coordinate (c1) at (3,2.2);
\coordinate (c2) at (2.2,3);

\coordinate (d) at (0.8,2.2);
\coordinate (d1) at (0,2.2);
\coordinate (d2) at (0.8,3);
 
\draw[thick] (1)--(2)--(3)--(4)--cycle;
    
\draw[black!50!white, thick] (3)--(c1)--(c)--(c2)--cycle;

\node at (4,3.6) {\small$E_{\scriptscriptstyle(1,1)}$};
\draw (2.7,2.9)--(3.2,3.3);



          
\end{tikzpicture}\nopagebreak[4]
\begin{fig}$E_{(1,1)}\subseteq H_d$ for $d=2$.\end{fig}
\end{center}

and observe that $\lambda^d(E_e)=\epsilon^d$. We put $0<\epsilon<1/2$ and since there are $2^d$ corners $e$ in $H_d$, we get that the volume in all the corners 
$$
2^d\cdot \epsilon^d =(2\epsilon)^d\xrightarrow{d\rightarrow\infty}0
$$
tends to zero. So we may think of the volume of $H_d$ being located close below  the surface and around the middle points of the faces. The following pictures summarize the three different ways to look at the hypercube in high dimensions.  We advise the reader however to be careful as all three pictures show of course a 2-dimensional cube and are only meant to visualize different effects that take place in high dimensions.

\begin{center}
\begin{picture}(350,110)(0,0)

\put(20,45){\begin{tikzpicture}[scale=0.8]
\coordinate (1) at (0,0);
\coordinate (2) at (3,0);
\coordinate (3) at (3,3);
\coordinate (4) at (0,3);
\fill[pattern=north west lines,draw=black] (1)--(2)--(3)--(4)--cycle;
\end{tikzpicture}}

\put(10,20.5){\begin{minipage}{90pt}\footnotesize
The hypercube is compact and convex.
\end{minipage}}

\put(136,42){\begin{tikzpicture}[scale=0.85]
\coordinate (1) at (-1.5,-1.5);
\coordinate (2) at (1.5,-1.5);
\coordinate (3) at (1.5,1.5);
\coordinate (4) at (-1.5,1.5);
\coordinate (a1) at (-0.7,-0.3);
\coordinate (a2) at (-0.3,-0.7);
\coordinate (b1) at (0.3,-0.7);
\coordinate (b2) at (0.7,-0.3);
\coordinate (c1) at (0.7,0.3);
\coordinate (c2) at (0.3,0.7);
\coordinate (d1) at (-0.3,0.7);
\coordinate (d2) at (-0.7,0.3);
 \draw [black, thick] plot [smooth cycle, tension=20] coordinates {(a1) (1) (a2) (b1) (2) (b2) (c1) (3) (c2) (d1) (4) (d2)};
\end{tikzpicture}}

\put(122,10){\begin{minipage}{105pt}\footnotesize Distances between corners grow with the dimension, while distances between fa\-ces remain constant.
\end{minipage}}

\put(263,45){\begin{tikzpicture}[scale=0.8]
\pgfdeclareradialshading{ring}{\pgfpoint{0cm}{0cm}}%
{rgb(0cm)=(1,1,1);
rgb(0.7cm)=(1,1,1);
rgb(0.719cm)=(1,1,1);
rgb(0.72cm)=(0.3,0.3,0.3);
rgb(0.9cm)=(1,1,1)}
\coordinate (1) at (0,0);
\coordinate (2) at (3,0);
\coordinate (3) at (3,3);
\coordinate (4) at (0,3);

\coordinate (a) at (0.4,0.4);
\coordinate (b) at (2.6,0.4);
\coordinate (c) at (2.6,2.6);
\coordinate (d) at (0.4,2.6);

\begin{scope}
\clip(0,0) rectangle (3,3);
\fill[shading=ring] (-2.35,1.5) circle (2.7);
\fill[shading=ring] (5.35,1.5) circle (2.7);
\fill[shading=ring] (1.5,5.35) circle (2.7);
\fill[shading=ring] (1.5,-2.35) circle (2.7);
\end{scope}

\draw[thick] (1)--(2)--(3)--(4)--cycle;
\end{tikzpicture}}

\put(250,15.5){\begin{minipage}{100pt}\footnotesize
The volume concentrates at the surface and in the middle of the faces.
\end{minipage}}
\end{picture}\vspace{5pt}\nopagebreak[4]
\begin{fig}Three ways to think of the high-dimensional\vspace{-3pt}\\hypercube in terms of geometry and topology, metric and measure.\end{fig}
\end{center}

If we intepret the last of our above findings in terms of a random variable $X\sim\mathcal{U}(H_d)$, then it means that $X$ attains with high probability values close to the surface and close to the middle of the faces. Conversely, this means that every method that draws random points from the hypercube will neglect its corners and its interior. So if we for example are given a data set inside $H_d$ and want to compute a predictor function $H_d\rightarrow\mathbb{R}$ then we need a a very large sample to do so. Indeed, the volume of $H_d$ grows exponentially in $d$ so if we, e.g., want to partition $H_d$ into smaller cubes with fixed volume we will need exponentially many such cubes. Furthermore, if we knew that our data is uniformly distributed then all the data will gather together at the regions indicated above. If we now sample for instance two points then the odds that they are each close to the middle of different (and not opposite) faces are high. The right picture above suggests that we may expect that the angle between any two randomly chosen points will be approximately $90^{\circ}$. Experiments suggest moreover, that the distance of two points chosen at random is close to $\sqrt{\scalebox{0.8}{$\frac32$}d}$.

\smallskip  

The above effect is commonly referred to as the \emph{curse of high dimensions}. Indeed, classical data analysis methods relying, e.g., on $k$-nearest neighbor search, or cosine similarity, can apparently not anymore be applied effectively, if all mutual distances and angles between data points coincide! We will see below that this effect is not restricted to the hypercube, but appears also for the unit ball $\Bbar_1(0)$ and on the whole space $\mathbb{R}^d$ which we will study now.

\medskip

For this, let us consider points drawn at random from $\mathbb{R}^d$ with respect to a Gaussian distribution with mean zero and variance one. As we mentioned in Definition \ref{DFN-DISTR}(ii), the latter formalizes as a random vector $X\colon\Omega\rightarrow\mathbb{R}^d$ whose coordinates are independent normal random variables. Therefore it is very easy (see Problem \ref{PROB-1}) to generate such random points in a simulation and compute angles and norms. If we do so, we detect an effect similar to what we just above observed for the hypercube. We get that the norms of all random points in dimension 100 are close to 10.

\begin{center}
\begin{tikzpicture}

	\begin{axis}
[
axis line style={thick, shorten >=-10pt, shorten <=-10pt},
y=0.0275pt,
x=17pt,
axis y line=left,
axis x line=middle,
axis line style={->},
no markers,
tick align=outside,
major tick length=2pt,
xmin=0,
xmax=14,
xtick={0, 2, ..., 14},
ymin=0,
ymax=3000,
ytick={500, 1000, ..., 3000},
every tick label/.append style={font=\tiny},
xlabel=\small $\|x\|$,
every axis x label/.style={
    at={(ticklabel* cs:1.07)},
    anchor=west,
},
]
		\addplot[ mark=none,fill=black, 
                    fill opacity=0.05] coordinates {
(6.85, 1)
(6.95, 0)
(7.05, 0)
(7.15, 0)
(7.25, 2)
(7.35, 0)
(7.45, 5)
(7.55, 5)
(7.65, 9)
(7.75, 11)
(7.85, 17)
(7.95, 41)
(8.05, 57)
(8.15, 81)
(8.25, 132)
(8.35, 179)
(8.45, 249)
(8.55, 362)
(8.65, 517)
(8.75, 657)
(8.85, 825)
(8.95, 1030)
(9.05, 1250)
(9.15, 1467)
(9.25, 1779)
(9.35, 1886)
(9.45, 2180)
(9.55, 2357)
(9.65, 2626)
(9.75, 2624)
(9.85, 2823)
(9.95, 2703)
(10.05, 2854)
(10.15, 2671)
(10.25, 2580)
(10.35, 2336)
(10.45, 2261)
(10.55, 1999)
(10.65, 1753)
(10.75, 1483)
(10.85, 1262)
(10.95, 1089)
(11.05, 902)
(11.15, 755)
(11.25, 573)
(11.35, 455)
(11.45, 305)
(11.55, 227)
(11.65, 167)
(11.75, 146)
(11.85, 105)
(11.95, 72)
(12.05, 51)
(12.15, 29)
(12.25, 15)
(12.35, 12)
(12.45, 8)
(12.55, 2)
(12.65, 7)
(12.75, 2)
(12.85, 1)
(12.95, 0)
(13.05, 2)
	};
	\end{axis}
\end{tikzpicture}\nopagebreak[4]
\begin{fig}\label{FIG-1}Distributions of the norm of 50\,000 random points $x\sim\mathcal{N}(0,1,\mathbb{R}^{100})$.\end{fig}
\end{center}

If we run the experiment for different dimensions $d$ and each time compute the average over the norms then we see that the latter turn out to be close to $\sqrt{d}$. Moreover, we see that the variances seem to be bounded by a constant independent of the dimension.

\begin{center}
\begin{tabular}{crrrrrrrrrr}
\toprule
$d$ & 1 & 10 & 100 & 1\,000 & 10\,000 & 100\,000 & 1\,000\,000 \\
\midrule
$\frac{1}{n}{\displaystyle\Bigsum{i=1}{n}}\|x^{\scriptscriptstyle(i)}\|$ & 0.73 & 3.05 & 10.10 & 31.61 & 100.03 & 316.21 & 1000.03 \\\vspace{4pt} 
$\sqrt{d}$ & 1.00 & 3.16 & 10.00 & 31.62 & 100.00 & 316.22 & 1000.00 \\
Variance & 0.33 & 0.48 & 0.54 & 0.45 & 0.52 & 0.36 & 0.44 \\
\bottomrule
\end{tabular}\nopagebreak[4]
\begin{tab}\label{TAB-1}Average norms of $n=100$ random points $x^{\scriptscriptstyle(i)}\sim\mathcal{N}(0,1,\mathbb{R}^d)$.\end{tab}
\end{center}

We leave it as Problem \ref{PROB-1} to carry out the experiments and replicate Figure \ref{FIG-1} and Table \ref{TAB-1}. From the theoretical point of view, if we use that $X\sim\mathcal{N}(0,1,\mathbb{R}^d)$ has independent coordinates $X_i\sim\mathcal{N}(0,1)$ for $i=1,\dots,d$, we can compute expectations and variances directly. Since we do not know a priori how the root function interacts with the expectation, we start considering the norm squared. We then get
\begin{equation}\label{EXP-NORM-SQUARED}
\begin{array}{rl}\vspace{3pt}
\E(\|X\|^2) &= \E\bigl(X_1^2+\cdots+X_d^2\bigr) = \E(X_1^2)+\cdots+\E(X_d^2)\\\vspace{3pt}
&= \E\bigl((X_1-\E(X_1))^2\bigr)+\cdots+\E\bigl((X_d-\E(X_d))^2\bigr)\\
&= \V(X_1)+\cdots+\V(X_d)= d
\end{array}
\end{equation}
which corroborates that $\E(\|X\|)$ should be close to $\sqrt{d}$ for large $d$. Also, the variance of the norm squared can be computed directly, namely via
\begin{equation}\label{VAR-NORM-SQUARED}
\begin{array}{rl}\vspace{2pt}
\V(\|X\|^2) &=\V(X_1^2)+\cdots+\V(V_d^2) = d\cdot\V(X_1^2)=d\cdot\bigl(\E(X_1^4)-\E(X_1^2)^2\bigr)\\\vspace{3pt}
& =d\cdot\bigl(\medfrac{1}{\sqrt{2\pi}}{\displaystyle\int_{\mathbb{R}}}x^4\exp(-x^2/2)\dd x-\V(X_1)\bigr)\\
&= (3-1)d= 2d.
\end{array}
\end{equation}
Without the square we get the following result which confirms and quantifies the picture that we got in the experiments, namely that Gaussian random vectors will have norm close to $\sqrt{d}$ and that their distribution does not spread out if $d$ increases.

\begin{thm}\label{EXP-VAR-NORM} Let $X\sim\mathcal{N}(0,1,\mathbb{R}^d)$. Then
\begin{compactitem}\vspace{2pt}
\item[(i)] $\forall\:d\geqslant1\colon |\E(\|X\|-\sqrt{d}\hspace{1.5pt})|\leqslant1/\sqrt{d}$,\vspace{2pt}
\item[(ii)] $\forall\:d\geqslant1\colon \V(\|X\|)\leqslant2$.\vspace{2pt}
\end{compactitem}
In particular, the expectation $\E(\|X\|-\sqrt{d}\hspace{1.5pt})$ converges to zero for $d\rightarrow\infty$.
\end{thm}
\begin{proof}(i) We start with the following equality
$$
\|X\|-\sqrt{d} = \medfrac{\|X\|^2-d}{2\sqrt{d}}-\medfrac{(\|X\|^2-d)^2}{2\sqrt{d}\hspace{1pt}(\|X\|+\sqrt{d}\hspace{1pt})^2}=:S_d-R_d
$$
 which follows from $\|X\|^2-d=(\|X\|-\sqrt{d}\hspace{1.5pt})(\|X\|+\sqrt{d}\hspace{1.5pt})$. Next we use \eqref{VAR-NORM-SQUARED}
and $\|X\|\geqslant0$ to estimate
$$
0\leqslant \E(R_d) \leqslant \medfrac{\E((\|X\|^2-d)^2)}{2d^{3/2}}=\medfrac{\V(\|X\|^2)}{2d^{3/2}}=\medfrac{2d}{2d^{3/2}}=\medfrac{1}{\sqrt{d}}
$$
which shows that $\E(R_d)\rightarrow0$ for $d\rightarrow\infty$. As $\E(\|X\|^2)=d$ we have $\E(S_n)=0$ and thus we get
$$
|\E(\|X\|-\sqrt{d}\hspace{1.5pt})|=|\E(S_d-R_d)|=|-\E(R_d)|\leqslant\medfrac{1}{\sqrt{d}}
$$
as claimed.

\smallskip

(ii) For the variance we obtain
\begin{align*}
\V(\|X\|) &=  \V(\|X\|-\sqrt{d}) = \E\bigl((\|X\|-\sqrt{d})^2\bigr) - \bigl(\E(\|X\|-\sqrt{d})\bigr)^2\\
&\leqslant \E\bigl((\|X\|-\sqrt{d})^2\bigr)= \E\bigl(\|X\|^2 - 2\|X\|\sqrt{d} + d\bigr)\\
 &   = \E(\|X\|^2) - 2\sqrt{d}\hspace{1.5pt}\E(\|X\|) +d= 2d -2\sqrt{d}\hspace{1.5pt}\E\bigl(\|X\| -\sqrt{d} +\sqrt{d}\hspace{1.5pt}\bigr)\\
&  = 2\sqrt{d}\hspace{1.5pt}\E(R_d) \leqslant 2
\end{align*}
which establishes (ii). 
\end{proof}

Let us now consider the mutual distance between two independent random vectors $X,Y\sim\mathcal{N}(0,1,\mathbb{R}^d)$. For the squared distance we can compute directly
\begin{align*}
\E(\|X-Y\|^2) & = \Bigsum{i=1}d\E\bigl((X_i-Y_i)^2\bigr)\\
&= \Bigsum{i=1}d \E(X_i^2)+2\E(X_i)\E(Y_i)+\E(Y_i^2)\\
& = \Bigsum{i=1}d (1+2\cdot0\cdot0+1) = 2d
\end{align*}
which suggests that $\|X-Y\|$ will be close to $\sqrt{2d}$. This is again underpinned by experiments, see Table \ref{TAB-3x}, and we leave it as Problem \ref{PROB-3} to repeat the experiments.
\begin{center}
\begin{tabular}{crrrrrrrrr}
\toprule
$d$ & 1 & 10 & 100 & 1\,000 & 10\,000 & 100\,000  \\
\midrule
$\frac{1}{n(n-1)}{\displaystyle\Bigsum{i\not=j}{}}\|x^{\scriptscriptstyle(i)}-x^{\scriptscriptstyle(j)}\|$ & 1.06 & 4.41 & 14.05 & 44.65 & 141.50 & 447.35  \\\vspace{4pt} 
$\sqrt{2d}$ & 1.41 & 4.47 & 14.14 & 44.72 & 141.42 & 447.21  \\
Variance & 0.74 & 1.10 & 0.92 & 0.89 & 0.96 & 1.08  \\
\bottomrule
\end{tabular}\nopagebreak[4]
\begin{tab}\label{TAB-3x}Average pairwise distances of $n=100$ random points $x^{\scriptscriptstyle(i)}\sim\mathcal{N}(0,1,\mathbb{R}^d)$.\end{tab}
\end{center}
We leave it as Problem \ref{PROB-2} to adapt our proof of Theorem \ref{EXP-VAR-NORM} to establish the following result on the distances without square.

\begin{thm}\label{EXP-VAR-DIST} Let $X$, $Y\sim\mathcal{N}(0,1,\mathbb{R}^d)$. Then
\begin{compactitem}\vspace{2pt}
\item[(i)] $\forall\:d\geqslant1\colon\E(\|X-Y\|-\sqrt{2d})\leqslant1/\sqrt{2d}$,\vspace{2pt}
\item[(ii)] $\V(\|X-Y\|)\leqslant3$.
\end{compactitem}
\end{thm}
\begin{proof} This can be proved exactly as Theorem \ref{EXP-VAR-NORM}.
\end{proof}

To see what is happening to the angles of random points we can now make use of the fact that we know already that $\E(\|X\|^2)=\E(\|Y\|^2)=d$ and $\E(\|X-Y\|^2)=2d$ holds. From this we get the next result.

\begin{thm}\label{EXP-VAR-ANGLE} Let $X,Y\sim\mathcal{N}(0,1,\mathbb{R}^d)$ and $\xi\in\mathbb{R}^d$ be a constant. Then
\begin{compactitem}\vspace{2pt}
\item[(i)] $\E(\langle{}X,Y\rangle{})=0$ and $\V(\langle{}X,Y\rangle{})=d$,\vspace{2pt}
\item[(ii)] $\E(\langle{}X,\xi\rangle{})=0$ and $\V(\langle{}X,\xi\rangle{})=\|\xi\|^2$.
\end{compactitem}
\end{thm}
\begin{proof} (i) Employing the cosine rule leads to
$$
\|X-Y\|^2=\|X\|^2+\|Y\|^2-2\|X\|\|Y\|\cos(\theta)
$$
where $\theta$ is the angle between $X$ and $Y$ as in the following picture.
\begin{center}
\usetikzlibrary{angles}
\begin{tikzpicture}[scale=0.7,
my angle/.style = {draw, 
                   angle radius=7mm, 
                   angle eccentricity=1.1, 
                   right, inner sep=1pt,
                   font=\footnotesize}]
                   
\draw   (0,0) coordinate (a) --
        (5,0) coordinate (c) --
        (0.5,3) coordinate (b) -- cycle;
\pic[my angle] {angle = c--a--b};

\node[circle, fill, black, inner sep=1pt, outer sep=0pt, xshift=0pt, radius=1pt] at (0,0) {};
\node[circle, fill, black, inner sep=1pt, outer sep=0pt, xshift=0pt, radius=1pt] at (5,0) {};
\node[circle, fill, black, inner sep=1pt, outer sep=0pt, xshift=0pt, radius=1pt] at (0.5,3) {};

\node[] at (0.45,0.37) {\small$\theta$};
\node[] at (-0.2,-0.3) {$0$};
\node[] at (5.3,-0.3) {$X$};
\node[] at (0.35,3.3) {$Y$};

\node[] at (-0.55,1.8) {\small$\|Y\|$};
\node[] at (2.9,-0.45) {\small$\|X\|$};
\node[] at (3.95,2.1) {\small$\|X-Y\|$};

\draw   (-0.1,1.8) -- (0.2,1.65);
\draw   (2.38,-0.35) -- (2.0,-0.09);
\draw   (2.95,2.1) -- (2.5,1.75);
\end{tikzpicture}\nopagebreak[4]
\begin{fig}\label{FIG-3a}The cosine rule generalizes the Pythagorean theorem.\end{fig}
\end{center}
Using $\cos\theta=\langle{}\frac{X}{\|X\|},\frac{Y}{\|Y\|}\rangle{}$ yields $\langle{}X,Y\rangle{}=\medfrac{1}{2}\bigl(\|X\|^2+\|Y\|^2-\|X-Y\|^2\bigr)$ whose expectation we can compute as follows
$$
\E(\langle{}X,Y\rangle{}) =\medfrac{1}{2}\bigl(\E(\|X\|^2)+\E(\|Y\|^2)-\E(\|X-Y\|^2)\bigr) =\medfrac{1}{2}(d+d-2d)=0.
$$
The variance computes as follows
\begin{align*}
\V(\langle{}X,Y\rangle{}) & = \V(X_1Y_1+\cdots+X_dY_d) = \V(X_1Y_1)+\cdots+\V(X_dY_d)\\
& = d\cdot\V(X_1Y_1) = d\cdot\bigl(\E(X_1^2)\E(Y_1^2) - \E(X_1)^2\E(Y_1)^2\bigr)\\
& = d\cdot\bigl(\E(X_1^2-\E(X_1))\E(Y_1^2-\E(Y_1))\bigr) = d\cdot\V(X_1)\V(Y_1) = d.
\end{align*}
(ii) For $\xi=(\xi_1,\dots,\xi_d)$ we get
$$
\E(\langle{}X,\xi\rangle{})=\Bigsum{i=1}{d}\E(X_i\xi_i)= \Bigsum{i=1}{d}\xi_i\E(X_i)=0
$$
and
$$
\V(\langle{}X,\xi\rangle{}) = \Bigsum{i=1}{d}\V(\xi_i X_i) = \Bigsum{i=1}{d}\xi_i^2\V(X_i) = \|\xi\|^2
$$
in a straightforward manner.
\end{proof}

Also the statements of Theorem \ref{EXP-VAR-ANGLE} can be underpinned by experiments, see Table \ref{TAB-3} and Problem \ref{PROB-4}.

\begin{center}
\begin{tabular}{crrrrrrrrrr}
\toprule
$d$ & 1 & 10 & 100 & 1\,000 & 10\,000 & 100\,000 \\
\midrule\vspace{2pt}
$\frac{1}{n(n-1)}{\displaystyle\Bigsum{i\not=j}{\phantom{n}}}\langle{}x^{\scriptscriptstyle(i)},x^{\scriptscriptstyle(j)}\rangle$ & -0.01 & 0.05 & -0.20 & 0.69 & 0.91 & 4.07  \\
Variance & 0.76 & 9.83 & 107.23 & 1\,022.83 & 9\,798.55 & 101\,435.71  \\
\bottomrule
\end{tabular}
\begin{tab}\label{TAB-3}Average  scalar product of $n=100$ random points $x^{\scriptscriptstyle(i)}\sim\mathcal{N}(0,1,\mathbb{R}^d)$.\end{tab}
\end{center}

The above means that the distribution of the scalar products spreads out for large $d$. If we however consider the normalized scalar products $\langle{}\frac{x}{\|x\|},\frac{y}{\|y\|}\rangle{}$, then the experiments suggest that average and variance tend to zero as $d\rightarrow\infty$, see Problem \ref{PROB-4}.

\medskip

Working with the expectation describes what happens `in average' and the variance allows to judge roughly the error we make if we reduce to considering the expectation. In the sequel we will, instead of working with expectation and variance, present explicit estimates for $\P\bigl[\hspace{1pt}\bigl|\|X\|-\sqrt{d}\hspace{1.5pt}\bigr|\geqslant\epsilon\hspace{.5pt}\bigr]$ and $\P\bigl[\hspace{1pt}\bigl|\langle{}X,Y\rangle{}\bigr|\geqslant\epsilon\hspace{.5pt}\bigr]$ whenever $X,Y\sim\mathcal{N}(0,1,\mathbb{R}^d)$. Moreover, we will get similar results for $X,Y\sim\mathcal{U}(\B_1(0))$ and establish the following table

\begin{center}
\begin{tabular}{clll}
        \toprule
        \phantom{i}Distribution \hspace{10pt}\phantom{x}&\hspace{20pt} $\|X\|$\hspace{10pt} &\hspace{10pt} $\hspace{10pt}\|X-Y\|$ \hspace{10pt}&\hspace{10pt} $\hspace{10pt}\langle X,Y\rangle$ \hspace{10pt}\\
        \midrule
        $X,Y\sim\mathcal{N}(0,1,\mathbb{R}^d)$\hspace{10pt}\phantom{x} &\phantom{$\sum^{T^T}$}$\hspace{-5pt}\approx\sqrt{d}$ & \phantom{$\sum^{T^{T^T}}$}$\approx\sqrt{2d}$& \phantom{$\sum^{T^{T^T}}$}$\approx0$\\
        $X,Y\sim\mathcal{U}(\operatorname{B}_1(0))$\hspace{10pt}\phantom{x} &\phantom{$\sum^{T^{T^T}}$}\hspace{-6pt}$\approx1$ & \phantom{$\sum^{T^{T^T}}$}$\approx\sqrt{2}$& \phantom{$\sum^{T^{T^T}}$}$\approx0$\\
        \bottomrule
        \end{tabular}\begin{tab}Values of norm and scalar product that will be\vspace{-3pt}\\attained in high dimensions `with high probability.'\end{tab}
        \end{center}

in which we mean, e.g., by $\|X\|\approx\sqrt{d}$, that the probability that $\|X\|-\sqrt{d}$ is larger than some threshold can be estimated by an expression that is small if the dimension is large enough. In the ideal case the threshold is a constant $\epsilon>0$ that we can choose arbitrarily and the bound for the probability depends only on $d$. We will see however that sometimes both expressions depend on $\epsilon$ and $d$. In these cases we have to be careful when we want to consider $\epsilon\rightarrow0$ or $d\rightarrow\infty$. Avoiding limits in this context\,---\,sometimes referred to as `non-asymptotic analysis'\,---\,will lead to theorems that guarantee that for \emph{almost all points} of a data set $A$, e.g., more than $(1-1/n)$--fraction of them, some property holds, or that for every point of the data set a property holds \emph{with high probability}, e.g., greater or equal to $1-1/n$. Here, $n$ can for example depend on $\#A$ or on $d$.

\section*{Problems}

\begin{probl}\label{PROB-1} Replicate the results of Figure \ref{FIG-1} and Table \ref{TAB-1} by running the  corresponding experiments.
\end{probl}

\begin{probl}\label{PROB-2}Let $X$, $Y\sim\mathcal{N}(0,1,\mathbb{R}^d)$. Show the following.

\begin{compactitem}

\item[(i)] $\forall\:d\geqslant1\colon\E(\|X-Y\|-\sqrt{2d})\leqslant1/\sqrt{2d}$. \vspace{3pt}

\item[(ii)] $\forall\:d\geqslant1\colon\V(\|X-Y\|)\leqslant 3$.

\end{compactitem}

\smallskip

{

\small

\emph{Hint:} Check firstly $\V((X_i-Y_i)^2)=3$ by establishing that $X_i-Y_i\sim\mathcal{N}(0,2,\mathbb{R})$ and by using a suitable formula for computing the fourth moment. Conclude then that $\V(\|X-Y\|^2)\leqslant3d$. Adapt finally the arguments we gave above for $\E(\|X\|-\sqrt{d}\hspace{1pt})$ and $\V(\|X\|)$.

}
\end{probl}

\begin{probl}\label{PROB-3} Replicate the results of Table \ref{TAB-3x}. Make additionally a plot of the distribution of mutual distances (this should give a picture similar to Figure \ref{FIG-1}).
\end{probl}

\begin{probl}\label{PROB-4} Replicate the results of Table \ref{TAB-3}. Let your code also compute the averages and variances of scalar products of the normalized vectors, i.e., $\langle{}x/\|x\|,y/\|y\|\rangle{}$.
\end{probl}

\begin{probl} Compute in a simulation norm, distance and scalar product of points that are drawn from the hypercube $H_d$ (coordinate-wise) uniformly, i.e., $x=(x_1,\dots,x_d)$ is drawn such that $x_i\sim\mathcal{U}([-1,1])$ for $i=1,\dots,d$. Make plots and tables similar to Figure \ref{FIG-1} and Table \ref{TAB-1}\,--\,\ref{TAB-3}. Compare the experimental data with our theoretical results above.
\end{probl}

\begin{probl} If you enjoy horror movies, then watch 2002's \emph{Hypercube}.
\end{probl}


\chapter{Concentration of measure}\label{Ch-VC}

In this chapter we denote by $\lambda^d$ the $d$-dimensional Lebesgue measure on $\mathbb{R}^d$. The euclidean norm we denote by $\|\cdot\|$ and balls with respect to this norm are denoted by $\B_r(x)$ and $\Bbar_r(x)$ respectively.

\smallskip

We start with the following observation. Let $A\subseteq\mathbb{R}^d$ be a measurable star-shaped set and assume that zero is a center. Then $(1-\epsilon)A\subseteq A$ and we can interpret $A\backslash(1-\epsilon)A$ as that part of $A$ that is close to its surface.

\begin{center}
\begin{picture}(150,80)(0,0)
\put(0,0){\begin{tikzpicture}[use Hobby shortcut,closed=true, scale=0.5] 
\node at (7,2.8) {\small$A\backslash{}(1-\epsilon)A$};
\draw(4.35,1.8)--(5.1,2.5);
\fill[pattern=north west lines,draw=black]  (-3.5,0.5) .. (-2.5,2.5) .. (-1,3.5).. (1,2.5).. (3,2.5).. (5,0.5) ..(2.5,-1.2).. (0,-0.5).. (-3,-2).. (-3.5,0.5);
\end{tikzpicture}}
\put(9,8){\begin{tikzpicture}[use Hobby shortcut,closed=true, scale=0.425]
\fill[white,draw=black]  (-3.5,0.5) .. (-2.5,2.5) .. (-1,3.5).. (1,2.5).. (3,2.5).. (5,0.5) ..(2.5,-1.2).. (0,-0.5).. (-3,-2).. (-3.5,0.5);
\end{tikzpicture}}
\end{picture}
\nopagebreak[4]
\begin{fig}Volume close to the surface of a `nice' subset of $\mathbb{R}^2$.\end{fig}
\end{center}

If we compute the measure of the latter set, then we get
$$
\lambda^d(A\backslash(1-\epsilon)A)=\lambda^d(A)-\lambda^d((1-\epsilon)A) = \lambda^d(A)-(1-\epsilon)^d\lambda^d(A)
$$
and thus
$$
\smallfrac{\lambda^d(A\backslash(1-\epsilon)A)}{\lambda^d(A)}=(1-\epsilon)^d
$$
which will be close to one if $\epsilon$ is small and $d$ is large. This means that in this case the measure of $A$ is located close to the surface of $A$ whereas its center contributes only little to its volume. Our first aim in this section is to quantify the latter observation in the case that $A$ is the unit ball.

\smallskip

\begin{lem}\label{LEM-9-1} For $d\geqslant 1$ and $0<\epsilon\leqslant1$ we have $(1-\epsilon)^d\leqslant \exp(-\epsilon d)$.
\end{lem}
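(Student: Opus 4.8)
The plan is to prove the elementary inequality $(1-\epsilon)^d\leqslant\exp(-\epsilon d)$ by reducing it to the one-variable estimate $1-t\leqslant\e^{-t}$ and then raising both sides to the $d$-th power.

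First I would establish the base inequality $1-t\leqslant\e^{-t}$ for all $t\in\mathbb{R}$ (we only need it for $0<t\leqslant1$, but it holds everywhere). The cleanest way is to set $f(t)=\e^{-t}-(1-t)=\e^{-t}+t-1$ and observe that $f(0)=0$, while $f'(t)=1-\e^{-t}$, which is $\geqslant0$ for $t\geqslant0$ and $\leqslant0$ for $t\leqslant0$; hence $f$ attains its global minimum $0$ at $t=0$, so $f(t)\geqslant0$ everywhere, i.e.\ $1-t\leqslant\e^{-t}$. Alternatively one can invoke convexity of $\e^{-t}$ and compare with its tangent line at $t=0$.

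Next I would apply this with $t=\epsilon$, giving $0\leqslant 1-\epsilon\leqslant\e^{-\epsilon}$. Since $0<\epsilon\leqslant1$, the base $1-\epsilon$ is nonnegative, so raising both sides of $1-\epsilon\leqslant\e^{-\epsilon}$ to the power $d\geqslant1$ preserves the inequality: $(1-\epsilon)^d\leqslant(\e^{-\epsilon})^d=\e^{-\epsilon d}=\exp(-\epsilon d)$, which is the claim.

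There is essentially no obstacle here; the only point requiring a word of care is the monotonicity of $x\mapsto x^d$ on $[0,\infty)$, which is why one needs $1-\epsilon\geqslant0$ (guaranteed by $\epsilon\leqslant1$) before taking powers — if $1-\epsilon$ were negative the step would fail for even $d$. Everything else is a routine calculus fact.
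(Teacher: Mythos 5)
Your argument is correct and is essentially identical to the paper's proof: both use the auxiliary function $f(t)=\e^{-t}+t-1$ with $f(0)=0$ and $f'(t)=1-\e^{-t}\geqslant0$ to get $1-\epsilon\leqslant\e^{-\epsilon}$, then raise to the $d$-th power using $1-\epsilon\geqslant0$. Your explicit remark about why the nonnegativity of the base matters is a nice touch, but otherwise there is nothing to add.
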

\begin{proof} Define $f\colon[0,1]\rightarrow\mathbb{R}$ via $f(x)=\exp(-x)+x-1$. Then $f(0)=0$ and $f'(x)=1-\exp(-x)\geqslant0$ holds for all $x\in[0,1]$. Therefore $f(x)\geqslant0$ and thus $\exp(-x)\geqslant1-x\geqslant0$ holds for all $x\in[0,1]$. Evaluating at $x=\epsilon$ and taking the $d$--th power finishes the proof.
\end{proof}

Using the lemma we can now prove that most of the unit ball's volume is located close to its surface. This is very reasonable, since if we think of the ball consisting of many thin shells all of the same depth but with different radii, then the outer ones will have (in high dimensions significantly!) larger volume then the inner ones.

\begin{thm}\label{SC-THM}(Surface Concentration Theorem) Let $d\geqslant1$ and $0<\epsilon\leqslant1$. Then we have
$$
\lambda^d(\hspace{.5pt}\Bbar_1(0)\backslash \B_{1-\epsilon}(0))\geqslant (1-\exp(-\epsilon d))\cdot\lambda^d(\hspace{.5pt}\Bbar_1(0)),
$$
i.e., at least $1-\exp(-\epsilon d)$ fraction of the volume of the $d$-dimensional unit ball is located $\epsilon$-close to its surface.

\begin{center}
\begin{picture}(135,90)(0,0)
\put(0,-5){\begin{tikzpicture}

     \begin{scope}
\fill[pattern=north west lines,opacity=.6,draw] 
  (0,0) circle [radius=1.5];
  \fill [white, draw=black]  (0,0) circle [radius=1.3];
\end{scope}

   
\draw[->] (-2.0,0) -- (2.0,0) node[right] {};
\draw[->] (0,-1.8) -- (0,1.8) node[above] {};

\coordinate (A) at (0,0);

\draw (A) circle[radius=1.5];

\draw (A) circle[radius=1.3];

\node (1) at (-0.15,0.9) {\small$1$};

\node (ep) at (-0.8,0.28) {\small$1\!\shortminus\!\epsilon$};

\draw (1.37,0.3)--(1.9,0.65);

\draw[-latex] (0,0)--(-1,0.83);

\draw[-latex] (0,0)--(-0.6,1.37);

\node at (3.15,0.8) {\small$\Bbar_1(0)\backslash \B_{1-\epsilon}(0)$};
\end{tikzpicture}}
\end{picture}\nopagebreak[4]
\begin{fig}Volume close to the surface of the unit ball.\end{fig}
 \end{center}
\end{thm}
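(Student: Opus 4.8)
The plan is to reduce everything to the scaling behaviour of Lebesgue measure under dilations, exactly as in the star-shaped computation carried out just before Lemma \ref{LEM-9-1}. First I would observe that $\B_{1-\epsilon}(0)=(1-\epsilon)\B_1(0)$, so that by the scaling property of $\lambda^d$ one has $\lambda^d(\B_{1-\epsilon}(0))=(1-\epsilon)^d\lambda^d(\B_1(0))$. Since $0<\epsilon\leqslant1$ we have $\B_{1-\epsilon}(0)\subseteq\Bbar_1(0)$, and since the unit sphere $\{x\:;\:\|x\|=1\}$ is a $\lambda^d$--null set we may freely replace $\Bbar_1(0)$ by $\B_1(0)$ in volume computations; in particular $\lambda^d(\Bbar_1(0))=\lambda^d(\B_1(0))$.

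Next I would compute the volume of the shell directly:
$$
\lambda^d\bigl(\Bbar_1(0)\backslash\B_{1-\epsilon}(0)\bigr)=\lambda^d(\Bbar_1(0))-\lambda^d(\B_{1-\epsilon}(0))=\lambda^d(\Bbar_1(0))-(1-\epsilon)^d\lambda^d(\Bbar_1(0))=\bigl(1-(1-\epsilon)^d\bigr)\lambda^d(\Bbar_1(0)).
$$
Finally I would invoke Lemma \ref{LEM-9-1}, which gives $(1-\epsilon)^d\leqslant\exp(-\epsilon d)$, hence $1-(1-\epsilon)^d\geqslant1-\exp(-\epsilon d)$, and substituting this into the displayed identity yields the claimed inequality.

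There is essentially no hard step here: the only points requiring a word of justification are the dilation formula $\lambda^d(tA)=t^d\lambda^d(A)$ (standard, and already used implicitly in the preceding discussion of star-shaped sets) and the harmless passage between the open and closed ball via a null set. The estimate $(1-\epsilon)^d\leqslant\exp(-\epsilon d)$ is precisely what Lemma \ref{LEM-9-1} was proved for, so the theorem is really just a repackaging of that lemma together with the exact shell-volume identity.
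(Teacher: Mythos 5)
Your proof is correct and follows exactly the same route as the paper: the shell volume identity $\lambda^d(\Bbar_1(0)\backslash\B_{1-\epsilon}(0))=(1-(1-\epsilon)^d)\lambda^d(\Bbar_1(0))$ via the dilation property, followed by Lemma \ref{LEM-9-1}. Nothing is missing.
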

\begin{proof} We compute
\begin{align*}
\frac{\lambda^d(\hspace{.5pt}\Bbar_1(0)\backslash{}\B_{1-\epsilon}(0))}{\lambda^d(\hspace{.5pt}\Bbar_1(0))} &= \frac{\lambda^d(\hspace{.5pt}\Bbar_1(0))-\lambda^d(\hspace{.5pt}\B_{1-\epsilon}(0))}{\lambda^d(\hspace{.5pt}\Bbar_1(0))} = 1-\frac{\lambda^d((1-\epsilon)\B_{1}(0))}{\lambda^d(\hspace{.5pt}\Bbar_1(0))}\\
& = 1-\frac{(1-\epsilon)^d\lambda^d(\hspace{.5pt}\B_{1}(0))}{\lambda^d(\hspace{.5pt}\B_1(0))} = 1-(1-\epsilon)^d\geqslant 1-\exp(-\epsilon d)
\end{align*}
where we used Lemma \ref{LEM-9-1} for the final estimate.
\end{proof}

For the second concentration theorem we need the following lemma.

\begin{lem}\label{LEM-9-2} For $d\geqslant3$ we have $\lambda^d(\hspace{.5pt}\Bbar_1(0))=\frac{2\pi}{d}\cdot\lambda^{d-2}(\hspace{.5pt}\Bbar_1(0))$ where the unit balls are taken in $\mathbb{R}^d$ and $\mathbb{R}^{d-2}$, respectively. 
\end{lem}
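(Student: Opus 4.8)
The plan is to compute $\lambda^d(\Bbar_1(0))$ by slicing the unit ball in $\mathbb{R}^d$ along the last two coordinates and recognizing that each slice is a scaled unit ball in $\mathbb{R}^{d-2}$. Concretely, for $(x_{d-1},x_d)$ ranging over the unit disc $D=\{(x_{d-1},x_d)\in\mathbb{R}^2\:;\:x_{d-1}^2+x_d^2\leqslant1\}$, the cross-section $\{(x_1,\dots,x_{d-2})\:;\:(x_1,\dots,x_d)\in\Bbar_1(0)\}$ is exactly the ball $\Bbar_r(0)\subseteq\mathbb{R}^{d-2}$ of radius $r=\sqrt{1-x_{d-1}^2-x_d^2}$. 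By Fubini's theorem,
$$
\lambda^d(\Bbar_1(0))=\int_D \lambda^{d-2}\bigl(\Bbar_{\sqrt{1-x_{d-1}^2-x_d^2}}(0)\bigr)\dd\lambda^2(x_{d-1},x_d)=\lambda^{d-2}(\Bbar_1(0))\int_D (1-x_{d-1}^2-x_d^2)^{(d-2)/2}\dd\lambda^2,
$$
using the scaling $\lambda^{d-2}(\Bbar_r(0))=r^{d-2}\lambda^{d-2}(\Bbar_1(0))$.

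Next I would evaluate the remaining two-dimensional integral $I:=\int_D (1-x_{d-1}^2-x_d^2)^{(d-2)/2}\dd\lambda^2$ by switching to polar coordinates $x_{d-1}=\rho\cos\varphi$, $x_d=\rho\sin\varphi$ with $\rho\in[0,1]$ and $\varphi\in[0,2\pi]$. This gives
$$
I=\int_0^{2\pi}\int_0^1 (1-\rho^2)^{(d-2)/2}\rho\dd\rho\dd\varphi=2\pi\int_0^1(1-\rho^2)^{(d-2)/2}\rho\dd\rho.
$$
The substitution $u=1-\rho^2$, $\dd u=-2\rho\dd\rho$, turns the inner integral into $\frac12\int_0^1 u^{(d-2)/2}\dd u=\frac12\cdot\frac{1}{d/2}=\frac1d$, so $I=\frac{2\pi}{d}$. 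Combining with the Fubini identity yields $\lambda^d(\Bbar_1(0))=\frac{2\pi}{d}\lambda^{d-2}(\Bbar_1(0))$, as claimed.

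I do not expect a genuine obstacle here; the argument is a routine Fubini-plus-polar-coordinates computation. The one point requiring a little care is the justification that the $(d-2)$-dimensional slice of the $d$-ball over a point of the unit disc is precisely the $(d-2)$-ball of the stated radius — this is immediate from the defining inequality $x_1^2+\cdots+x_d^2\leqslant1$ rewritten as $x_1^2+\cdots+x_{d-2}^2\leqslant 1-x_{d-1}^2-x_d^2$ — together with the measurability needed to apply Fubini, which holds since $\Bbar_1(0)$ is closed hence Borel. The hypothesis $d\geqslant3$ is what guarantees that $d-2\geqslant1$, so that $\mathbb{R}^{d-2}$ is a genuine positive-dimensional space and the scaling law $\lambda^{d-2}(\Bbar_r(0))=r^{d-2}\lambda^{d-2}(\Bbar_1(0))$ applies.
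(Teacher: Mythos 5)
Your proof is correct and follows essentially the same route as the one in the text: slice the ball along a two-dimensional coordinate plane via Fubini, use the scaling law for the $(d-2)$-dimensional ball, and evaluate the remaining planar integral in polar coordinates with the substitution $u=1-\rho^2$. The only cosmetic difference is that you slice along the last two coordinates while the text uses the first two.
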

\begin{proof} Since $\B_1(0)=\{(x_1,\dots,x_d)\in\mathbb{R}^d\:;\:x_1^2+\cdots+x_d^2\leqslant1\}$ we may compute
\begin{align*}
\lambda^d(\hspace{.5pt}\Bbar_1(0)) & = \int_{\hspace{.5pt}\Bbar_1(0)}1\dd\lambda^d\\
& = \int_{x_1^2+\cdots+x_d^2\leqslant1}1\dd\lambda^d(x_1,\dots,x_d)\\
& = \int_{x_1^2+x_2^2\leqslant1} \Big(\int_{x_3^2+\cdots+x_d^2\leqslant1-x_1^2-x_2^2}1\dd\lambda^{d-2}(x_3,\dots,x_d)\Big)d\lambda^2(x_1,x_2)\\
& = \int_{x_1^2+x_2^2\leqslant1} \lambda^{d-2}(\hspace{.5pt}\Bbar_{(1-x_1^2-x_2^2)^{1/2}}(0))\dd\lambda^2(x_1,x_2)\\
& = \int_{x_1^2+x_2^2\leqslant1}(1-x_1^2-x_2^2)^{(d-2)/2}\lambda^{d-2}(\hspace{.5pt}\Bbar_1(0))\dd\lambda^2(x_1,x_2)\\
& = \lambda^{d-2}\bigl(\hspace{.5pt}\Bbar_1(0)\bigr)\cdot\int_0^{2\pi}\int_0^1(1-r^2)^{(d-2)/2}\,r\dd r \dd\varphi\\
& = \lambda^{d-2}(\hspace{.5pt}\Bbar_1(0))\cdot\int_0^{2\pi} \dd\varphi \cdot \int_1^0u^{(d-2)/2} (-1/2) \dd u\\
& = \lambda^{d-2}(\hspace{.5pt}\Bbar_1(0)) \cdot 2\pi \cdot \int_0^1u^{d/2-1}/2 \dd u\\
& = \lambda^{d-2}(\hspace{.5pt}\Bbar_1(0)) \cdot \pi \cdot \medfrac{u^{d/2}}{d/2}\Big|_0^1 = \medfrac{2\pi}{d}\cdot\lambda^{d-2}(\hspace{.5pt}\Bbar_1(0)) 
\end{align*}
where we used Fubini's Theorem for the third and transformation into polar coordinates for the sixth equality.
\end{proof}

After we have seen that most of the volume of the unit ball is concentrated close to its surface we show in the next result that if we designate one point of the surface as the north pole, then most of the unit ball's volume will be concentrated at its waist. As with the surface concentration, this is not surprising, since if we go from the center in the direction of the north pole, then the ball becomes thinner in all of the $d-1$ remaining dimensions. The precise result is as follows.

\begin{thm}\label{WC-THM}(Waist Concentration Theorem) Let $d\geqslant3$ and $\epsilon>0$. Then we have
$$
\lambda^d(W_{\epsilon})\geqslant\bigl(1-\medfrac{2}{\epsilon\sqrt{d-1}}\exp\bigl({-\medfrac{\epsilon^2(d-1)}{2}}\bigr)\bigr)\cdot\lambda^d(\Bbar_1(0))
$$
where $W_{\epsilon}=\bigl\{(x_1,\dots,x_d)\in \Bbar_1(0)\:;\:|x_1|\leqslant\epsilon\bigr\}$.
\begin{center}
\begin{picture}(190,110)(0,0)
\put(0,0){\begin{tikzpicture}
 
   
\draw[->] (-2.2,0) -- (2.2,0) node[right] {$\mathbb{R}^{d-1}$};
\draw[->] (0,-1.8) -- (0,1.8) node[above] {$\mathbb{R}$};

\coordinate (A) at (0,0);

\coordinate (B1) at (-3,0.3);
\coordinate (B2) at (3,0.3);

\coordinate (B3) at (-3,-0.3);
\coordinate (B4) at (3,-0.3);

\draw (A) circle[radius=1.5];

 \tkzInterLC[R](B1,B2)(A,1.5cm)
     \tkzGetPoints{E}{F} 
  
 \tkzInterLC[R](B3,B4)(A,1.5cm) 
     \tkzGetPoints{G}{H}

    \draw (G)--(H);
     \draw (E)--(F);
     
          \draw[shorten <=1.8pt, shorten >=5pt] (H)--(-2.1,-0.6);
          \draw[shorten <=1.8pt, shorten >=5pt] (E)--(-2.1,0.6);
     
     \begin{scope}
\path [clip] (B3)--(B4)--(B2)--(B1)--cycle;
\fill[pattern=north west lines,opacity=.6,draw] 
  (0,0) circle [radius=1.5];
\end{scope}

\node at (-2.2,0.54) {\small${\scriptstyle+}\epsilon$};
 
\node at (-2.2,-0.58) {\small${\scriptstyle-}\epsilon$};

\node (W) at (2,0.95) {\small$W_{\epsilon}$};

\draw[shorten <=8pt] (1.9,1)--(1,0.2);

\node (1) at (-0.15,0.9) {\small$1$};
\draw[-latex] (0,0)--(-0.6,1.37);
\end{tikzpicture}}
\end{picture}
\nopagebreak[4]
\begin{fig}Most of the volume is located close to the equator of the unit ball.\end{fig}
 \end{center}
\end{thm}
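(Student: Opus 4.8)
The plan is to bound the complement $\Bbar_1(0)\backslash W_\epsilon = \{x\in\Bbar_1(0)\:;\:|x_1|>\epsilon\}$ and show its volume is small compared to $\lambda^d(\Bbar_1(0))$. First I would write this complement as a union of two symmetric caps and, by symmetry in $x_1\mapsto-x_1$, reduce to estimating $2\lambda^d(\{x\in\Bbar_1(0)\:;\:x_1>\epsilon\})$. Slicing perpendicular to the $x_1$-axis via Fubini, the slice at height $x_1=t$ is a $(d-1)$-dimensional ball of radius $(1-t^2)^{1/2}$, so
$$
\lambda^d(\{x\in\Bbar_1(0)\:;\:x_1>\epsilon\}) = \int_\epsilon^1 (1-t^2)^{(d-1)/2}\dd t\cdot\lambda^{d-1}(\Bbar_1(0)).
$$
The next step is to bound the integral. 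Using $1-t^2\leqslant\exp(-t^2)$ gives $\int_\epsilon^1(1-t^2)^{(d-1)/2}\dd t\leqslant\int_\epsilon^\infty\exp(-t^2(d-1)/2)\dd t$, and then the standard tail trick of inserting the factor $t/\epsilon\geqslant1$ on $[\epsilon,\infty)$ makes the integrand an exact derivative:
$$
\int_\epsilon^\infty\exp\bigl(-\medfrac{t^2(d-1)}{2}\bigr)\dd t \leqslant \medfrac{1}{\epsilon}\int_\epsilon^\infty t\exp\bigl(-\medfrac{t^2(d-1)}{2}\bigr)\dd t = \medfrac{1}{\epsilon(d-1)}\exp\bigl(-\medfrac{\epsilon^2(d-1)}{2}\bigr).
$$

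Combining, $\lambda^d(\Bbar_1(0)\backslash W_\epsilon)\leqslant\medfrac{2}{\epsilon(d-1)}\exp(-\epsilon^2(d-1)/2)\cdot\lambda^{d-1}(\Bbar_1(0))$. To turn $\lambda^{d-1}(\Bbar_1(0))$ into $\lambda^d(\Bbar_1(0))$ I would invoke Lemma \ref{LEM-9-2}: it gives $\lambda^d(\Bbar_1(0)) = \frac{2\pi}{d}\lambda^{d-2}(\Bbar_1(0))$, and what I actually need is a comparison of consecutive dimensions. The cleanest route is to note that the same slicing argument (integrating $(1-t^2)^{1/2}$ over $[-1,1]$, or directly comparing one-dimension-down) yields $\lambda^{d-1}(\Bbar_1(0)) = \bigl(\int_{-1}^1(1-t^2)^{(d-1)/2}\dd t\bigr)^{-1}$-type relation; more simply, from Lemma \ref{LEM-9-2} applied in dimension $d$ one gets $\lambda^{d-1}(\Bbar_1(0))/\lambda^d(\Bbar_1(0)) = \sqrt{d}/\sqrt{2\pi}\cdot(\text{bounded factor})$, and in any case $\lambda^{d-1}(\Bbar_1(0))/\lambda^d(\Bbar_1(0))\leqslant\sqrt{d-1}$ for $d\geqslant3$ is what makes the stated constant come out. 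Feeding this bound in replaces the $\frac{1}{d-1}$ by $\frac{\sqrt{d-1}}{d-1}=\frac{1}{\sqrt{d-1}}$, producing exactly $1-\medfrac{2}{\epsilon\sqrt{d-1}}\exp(-\epsilon^2(d-1)/2)$.

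The main obstacle I anticipate is precisely this last bookkeeping: getting the ratio $\lambda^{d-1}(\Bbar_1(0))/\lambda^d(\Bbar_1(0))$ bounded by $\sqrt{d-1}$ with the right constant so that the final inequality is literally the one stated, rather than one with a worse constant. This needs a clean one-step comparison between consecutive-dimensional ball volumes — either extracted from Lemma \ref{LEM-9-2} together with a crude monotonicity bound $\lambda^{d-1}(\Bbar_1(0))\leqslant\lambda^{d-2}(\Bbar_1(0))$, or proved directly by the slicing identity $\lambda^d(\Bbar_1(0)) = \lambda^{d-1}(\Bbar_1(0))\int_{-1}^1(1-t^2)^{(d-1)/2}\dd t$ and a lower bound on that integral of order $1/\sqrt{d-1}$. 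Everything else is the routine cap-slicing and the Gaussian tail estimate above.
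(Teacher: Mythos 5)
Your proposal is correct and follows essentially the same route as the paper: symmetric cap decomposition, Cavalieri slicing giving $\lambda^{d-1}(\Bbar_1(0))\int_\epsilon^1(1-t^2)^{(d-1)/2}\dd t$, and the Gaussian tail bound via $1-t^2\leqslant\exp(-t^2)$ with the inserted factor $t/\epsilon\geqslant1$. The one step you leave as a sketch --- the comparison $\lambda^{d-1}(\Bbar_1(0))\leqslant\sqrt{d-1}\,\lambda^d(\Bbar_1(0))$ --- is exactly equivalent to the lower bound $\int_0^1(1-t^2)^{(d-1)/2}\dd t\geqslant\frac{1}{2\sqrt{d-1}}$, which the paper proves by restricting the integral to $[0,1/\sqrt{d-1}]$, bounding the integrand below by $(1-1/(d-1))^{(d-1)/2}$, and applying Bernoulli's inequality (this is where $d\geqslant3$ enters); note that Lemma \ref{LEM-9-2} alone will not give you the consecutive-dimension ratio, since it relates $d$ to $d-2$. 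The paper sidesteps the explicit volume ratio by writing the conclusion as $1-\lambda^d(H^\epsilon)/\lambda^d(H^0)$ for the caps $H^\delta=\{x\in\Bbar_1(0)\:;\:x_1\geqslant\delta\}$, so that the factor $\lambda^{d-1}(\Bbar_1(0))$ cancels, but this is cosmetic; your bookkeeping yields the identical constant.
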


\begin{proof} We first observe that the statement is trivial for $\epsilon\geqslant1$, so we may assume $\epsilon<1$ for the rest of the proof. We now compute the volume of that part 
$$
H^{\delta}  = \{(x_1,\dots,x_d)\in \Bbar_1(0)\:;\:x_1\geqslant\delta\}
$$
of the unit ball that is located above $\delta\geqslant0$ in $x_1$-direction. We observe that for $x_1\in[\delta,1]$ the corresponding section of $H^{\delta}$ equals
\begin{align*}
H_{x_1}^{\delta} &=\bigl\{(x_2,\dots,x_d)\in\mathbb{R}^{d-1}\:;\:(x_1,x_2,\dots,x_d)\in H^{\delta}\bigr\}\\
& = \bigl\{(x_2,\dots,x_d)\in\mathbb{R}^{d-1}\:;\:\|(x_1,x_2,\dots,x_d)\|\leqslant1\bigr\}\\
& =\bigl\{y\in\mathbb{R}^{d-1}\:;\:x_1^2+\|y\|^2\leqslant1\bigr\}\\
& = \Bbar_{(1-x_1^2)^{1/2}}(0)
\end{align*}
where the ball is taken in $\mathbb{R}^{d-1}$, compare the picture below.
\begin{center}
\begin{picture}(190,75)(0,0)
\put(0,-5){\begin{tikzpicture}
 
   
\draw[->] (-2.2,0) -- (2.2,0) node[right] {$\mathbb{R}^{d-1}$};
\draw[->] (0,-0.2) -- (0,1.8) node[above] {$\mathbb{R}$};

\coordinate (A) at (0,0);

\coordinate (B1) at (-3,1);
\coordinate (B2) at (3,1);

\coordinate (B3) at (-3,0.92);
\coordinate (B4) at (3,0.92);


\draw []  (1.5,0) arc(0:180:1.5);

 \tkzInterLC[R](B1,B2)(A,1.5cm)
     \tkzGetPoints{E}{F} 
  
 \tkzInterLC[R](B3,B4)(A,1.5cm) 
     \tkzGetPoints{G}{H}

    \draw (G)--(H);
     \draw (E)--(F);
      
     \begin{scope}
\path [clip] (B3)--(B4)--(B2)--(B1)--cycle;
\fill[pattern=north west lines] 
  (0,0) circle [radius=1.5];
\end{scope}

\node (1) at (-0.75,0.3) {\small$1$};

\node (x1) at (0.3,0.42) {\small$x_1$};

\node (r) at (-0.5,1.13) {\small$r$};

\draw [dashed]  (0,0) -- (G);

\draw (1.14,0.97) to (1.7,1.4);

\node at (4.4,1.8) {\begin{minipage}{150pt}\small Disk with radius $r=(1-x_1^2)^{1/2}$ in $\mathbb{R}^d$ is a ball with radius $r$ in $\mathbb{R}^{d-1}$.\end{minipage}};
\end{tikzpicture}}
\end{picture}\nopagebreak[4]
\begin{fig} Slice of the unit ball $\Bbar_1(0)$.\end{fig}
 \end{center}

Now we employ Cavalieri's Principle and get
\begin{equation}\label{Caval-1}
\begin{aligned}
 \lambda^d\bigl(H^{\delta}\bigr) & = \int_{\delta}^1\lambda^{d-1}(H^{\delta}_{x_1})\dd\lambda^1(x_1)\\
& = \int_{\delta}^1\lambda^{d-1}(\hspace{0.5pt}\Bbar_{(1-x_1^2)^{1/2}}(0))\dd\lambda^1(x_1)\\
& = \int_{\delta}^1(1-x_1^2)^{(d-1)/2}\lambda^{d-1}(\hspace{0.5pt}\Bbar_1(0))\dd\lambda^1(x_1)\\
& = \lambda^{d-1}(\hspace{0.5pt}\Bbar_1(0))\cdot\int_{\delta}^1(1-x_1^2)^{(d-1)/2}\dd\lambda^1(x_1).
\end{aligned}
\end{equation}

Next we estimate the integral at the end of \eqref{Caval-1} in two ways. Firstly, we put $\delta=0$ and observe that $0\leqslant x_1\leqslant1/\sqrt{d-1}$ implies $x_1^2\leqslant1/(d-1)$. This in turn leads to $1-1/(d-1)\leqslant 1-x_1^2$ which implies $(1-1/(d-1))^{(d-1)/2}\leqslant (1-x_1^2)^{(d-1)/2}$. It follows
\begin{equation}\label{Caval-2}
\begin{aligned}
\int_{0}^1(1-x_1^2)^{(d-1)/2}\dd\lambda^1(x_1) & \geqslant \int_{0}^{1/\sqrt{d-1}}(1-x_1^2)^{(d-1)/2}\dd\lambda^1(x_1)\\
& \geqslant \int_{0}^{1/\sqrt{d-1}}(1-1/(d-1))^{(d-1)/2}\dd\lambda^1(x_1)\\
& =  (1-1/(d-1))^{(d-1)/2}\cdot\medfrac{1}{\sqrt{d-1}}\\
& \geqslant \left(1-\smallfrac{d-1}{2}\cdot\smallfrac{1}{d-1}\right)\cdot\medfrac{1}{\sqrt{d-1}}\\
& = \medfrac{1}{2\sqrt{d-1}}
\end{aligned}
\end{equation}
where we used Bernoulli's inequality in the last estimate as $(d-1)/2\geqslant1$ holds due to our assumption $d\geqslant3$. Secondly, we estimate the integral for $\delta=\epsilon>0$ from above. We employ $1-x_1^2\leqslant \exp(-x_1^2)$ from Lemma \ref{LEM-9-2} and $x_1/\epsilon\geqslant1$ to get
\begin{equation}\label{Caval-3}
\begin{aligned}
\int_{\epsilon}^1(1-x_1^2)^{(d-1)/2}\dd\lambda^1(x_1) & \leqslant \int_{\epsilon}^1(\exp(-x_1^2))^{(d-1)/2}\,(x_1/\epsilon)\,\dd\lambda^1(x_1)\\
& = \medfrac{1}{\epsilon} \cdot \int_{\epsilon}^1x_1\exp(-{\textstyle\frac{d-1}{2}}x_1^2)\dd\lambda^1(x_1)\\
& = \medfrac{1}{\epsilon}\cdot\frac{\exp({\textstyle-\frac{d-1}{2}})- \exp({\textstyle-\frac{\epsilon^2(d-1)}{2}})}{-(d-1)}\\
& \leqslant \frac{\exp({\textstyle-\frac{\epsilon^2(d-1)}{2}})}{\epsilon(d-1)}.
\end{aligned}
\end{equation}

Combining \eqref{Caval-1}--\eqref{Caval-3} we obtain
$$
\lambda^d(H^0)\geqslant\frac{\lambda^{d-1}(\hspace{0.5pt}\Bbar_1(0))}{2\sqrt{d-1}} \; \text{ and } \; \lambda^d(H^{\epsilon})\leqslant \lambda^{d-1}(\hspace{0.5pt}\Bbar_1(0)) \frac{\exp({\textstyle-\frac{\epsilon^2(d-1)}{2}})}{\epsilon(d-1)}.
$$

To finish the proof we observe that $H^0$ is the upper half of the unit ball and $H^0\backslash{}H^{\epsilon}$ is the upper half of $W_{\epsilon}$. Therefore we derive from the above
\begin{align*}
\frac{\lambda^d(W_{\epsilon})}{\lambda^d(\hspace{0.5pt}\Bbar_1(0))} & = \frac{2\cdot\lambda^d(H^0\backslash{}H^{\epsilon})}{2\cdot\lambda^d(H^0)} = \frac{\lambda^d(H^0)-\lambda^d(H^{\epsilon})}{\lambda^d(H^0)} = 1-\frac{\lambda^d(H^{\epsilon})}{\lambda^d(H^{0})}\\
& \geqslant 1-\frac{2\sqrt{d-1}\exp({\textstyle-\frac{\epsilon^2(d-1)}{2}})}{\epsilon(d-1)} = 1-\smallfrac{2}{\epsilon\sqrt{d-1}}\exp\bigl({-\smallfrac{\epsilon^2(d-1)}{2}}\bigr)
\end{align*}
as desired.
\end{proof}

The estimate in Theorem \ref{WC-THM} is only of value if the factor on the right hand side is strictly positive. The following lemma explains when this happens.

\begin{lem}\label{WC-LEM} There exists a unique $a_0>0$ such that $\frac{2}{a}\exp(-\frac{a^2}{2})<1$ holds for every $a>a_0$. We have $a_0\in(1,2)$.
\end{lem}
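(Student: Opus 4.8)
The plan is to analyse the single-variable function $g\colon(0,\infty)\rightarrow\mathbb{R}$ defined by $g(a)=\frac{2}{a}\exp\bigl(-\frac{a^2}{2}\bigr)$. I would show that $g$ is continuous, strictly decreasing, with $g(a)\rightarrow\infty$ as $a\rightarrow0^{+}$ and $g(a)\rightarrow0$ as $a\rightarrow\infty$; the claimed $a_0$ is then forced to be the unique solution of $g(a_0)=1$.

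\textbf{Monotonicity and limits.} Both factors $a\mapsto 1/a$ and $a\mapsto\exp(-a^2/2)$ are positive and strictly decreasing on $(0,\infty)$, hence so is their product $g$; equivalently, $\log g(a)=\log 2-\log a-\frac{a^2}{2}$ has derivative $-\frac{1}{a}-a<0$. Continuity is clear, and the two limits follow from the elementary behaviour of the factors: the $1/a$ term forces $g\rightarrow\infty$ near $0$, and the Gaussian term forces $g\rightarrow0$ at infinity.

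\textbf{Existence and uniqueness of the threshold.} By the intermediate value theorem there is some $a_0>0$ with $g(a_0)=1$, and strict monotonicity makes it unique; the same monotonicity gives $g(a)<g(a_0)=1$ for all $a>a_0$ and $g(a)>1$ for all $a\in(0,a_0)$. In particular $a_0$ is the only point with the property that $\frac{2}{a}\exp(-\frac{a^2}{2})<1$ holds precisely on $(a_0,\infty)$, which is the asserted uniqueness.

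\textbf{Localisation.} It remains to evaluate $g$ at the endpoints $1$ and $2$. We have $g(1)=2\exp(-\frac{1}{2})>1$, since this inequality is equivalent to $4>\e$, which holds; because $g$ is decreasing with $g(a_0)=1$, this yields $a_0>1$. On the other hand $g(2)=\exp(-2)<1$, so $a_0<2$. Hence $a_0\in(1,2)$. I do not anticipate a genuine obstacle here; the only thing to be careful about is to use \emph{strict} monotonicity of $g$ (not merely $g(a)\rightarrow0$), so that the threshold is genuinely unique, and to record the comparison $2\exp(-\frac{1}{2})>1\iff 4>\e$ explicitly rather than appealing to a decimal approximation.
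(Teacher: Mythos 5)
Your proposal is correct and follows essentially the same route as the paper: both arguments observe that $a\mapsto\frac{2}{a}\exp(-\frac{a^2}{2})$ is strictly decreasing as a product of two strictly positive, strictly decreasing factors, evaluate it at $a=1$ and $a=2$ to get $2\e^{-1/2}>1$ and $\e^{-2}<1$, and conclude by the intermediate value theorem together with strict monotonicity. Your explicit reduction of $2\e^{-1/2}>1$ to $4>\e$ is a nice touch of rigour, but the substance is identical.
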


\begin{proof} The function $f\colon(0,\infty)\rightarrow\mathbb{R}$, $f(a)=\frac{2}{a}\exp(-\frac{a^2}{2})$ is strictly decreasing as both of its factors are strictly decreasing and strictly positive. We have $f(1)=2/\sqrt{e}>1$ and $f(2)=1/\e^2<1$. Consequently there is $a_0>0$ such that $f(a_0)=1$ and necessarily $1<a_0<1$. For $a>a_0$ we get $f(a)<1$ and for $a<a_0$ we get $f(a)>1$, so $a_0$ is unique.
\end{proof}

Substituting $a=\epsilon\sqrt{d-1}$ shows that the factor in Theorem \ref{WC-THM} is strictly positive if and only if $\epsilon>a_0/\sqrt{d-1}$ holds with some $a_0\in(1,2)$, compare Problem \ref{Pa0}. That is, as bigger as we choose the dimension, the smaller we can make $\epsilon>0$.

\smallskip

We however \emph{cannot} let $\epsilon$ tend to zero for a fixed dimension $d$. The interpretation of Theorem \ref{WC-THM}, i.e., that $1-\frac{2}{\epsilon\sqrt{d-1}}\exp({-\frac{\epsilon^2(d-1)}{2}})$ fraction of the volume of the $d$-dimensional unit ball is located $\epsilon$-close to the equator, is only valid if $\epsilon$ is not too small. Although this might right now be fairly obvious, we point out that in contrast to the `typical epsilontics', $\epsilon>0$ cannot be thought of being arbitrarily small, but what we can choose for it depends on the space's dimension to be large enough.

\smallskip

For later use we note the following reformulation of Theorem \ref{WC-THM}.

\begin{cor}\label{WC-COR} Let $d\geqslant3$ and $\epsilon>0$. Then we have
$$
\frac{\lambda^d(\{(x_1,\dots,x_d)\in\Bbar_1(0)\:;\:|x_1|>\epsilon\})}{\lambda^d(\hspace{0.5pt}\Bbar_1(0))} \leqslant\smallfrac{2}{\epsilon\sqrt{d-1}}\exp\bigl({-\smallfrac{\epsilon^2(d-1)}{2}}\bigr)
$$
for every $\epsilon>0$.
\end{cor}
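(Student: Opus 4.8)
The plan is to read off the corollary from the Waist Concentration Theorem (Theorem \ref{WC-THM}) by passing to complements inside the unit ball. Write $C_\epsilon:=\{(x_1,\dots,x_d)\in\Bbar_1(0)\:;\:|x_1|>\epsilon\}$ for the set in the numerator. Since $W_\epsilon=\{(x_1,\dots,x_d)\in\Bbar_1(0)\:;\:|x_1|\leqslant\epsilon\}$, the two sets $C_\epsilon$ and $W_\epsilon$ partition $\Bbar_1(0)$; the only overlap that could be in question is the hyperplane slice $\{|x_1|=\epsilon\}\cap\Bbar_1(0)$, which has $\lambda^d$-measure zero, so the precise placement of the boundary is irrelevant. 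Hence $\lambda^d(C_\epsilon)=\lambda^d(\Bbar_1(0))-\lambda^d(W_\epsilon)$, and dividing by $\lambda^d(\Bbar_1(0))>0$ yields
$$
\frac{\lambda^d(C_\epsilon)}{\lambda^d(\Bbar_1(0))}=1-\frac{\lambda^d(W_\epsilon)}{\lambda^d(\Bbar_1(0))}.
$$

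Next I would insert the lower bound furnished by Theorem \ref{WC-THM}, namely $\lambda^d(W_\epsilon)/\lambda^d(\Bbar_1(0))\geqslant 1-\smallfrac{2}{\epsilon\sqrt{d-1}}\exp(-\smallfrac{\epsilon^2(d-1)}{2})$, into the identity above. The two $1$'s cancel and what remains is precisely the asserted bound $\smallfrac{2}{\epsilon\sqrt{d-1}}\exp(-\smallfrac{\epsilon^2(d-1)}{2})$, which finishes the proof.

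There is essentially no obstacle here; the only thing worth a remark is that, exactly as for Theorem \ref{WC-THM} itself, the estimate carries information only when its right-hand side is $<1$, i.e.\ (by Lemma \ref{WC-LEM}, after substituting $a=\epsilon\sqrt{d-1}$) when $\epsilon>a_0/\sqrt{d-1}$. In the opposite regime the corollary is still true but vacuous: its left-hand side is a ratio of measures of nested measurable sets and therefore lies in $[0,1]$, while the right-hand side is then $\geqslant 1$; and for $\epsilon\geqslant 1$ one has $C_\epsilon=\emptyset$ because $|x_1|\leqslant\|x\|\leqslant 1$ on $\Bbar_1(0)$, so the left-hand side is $0$. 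No case analysis beyond these observations is needed.
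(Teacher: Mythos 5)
Your argument is correct and coincides with the paper's own proof: both identify the set in the numerator as $\Bbar_1(0)\backslash W_{\epsilon}$, write its measure as $\lambda^d(\Bbar_1(0))-\lambda^d(W_{\epsilon})$, and substitute the lower bound on $\lambda^d(W_{\epsilon})$ from Theorem \ref{WC-THM} so that the two $1$'s cancel. Your additional remarks on measure-zero boundaries and on when the bound is non-vacuous are harmless but not needed.
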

\begin{proof} In the notation of Theorem \ref{WC-THM} we have
\begin{align*}
\frac{\lambda^d(\{(x_1,\dots,x_d)\in\Bbar_1(0)\:;\:|x_1|>\epsilon\})}{\lambda^d(\hspace{0.5pt}\Bbar_1(0))} &= \frac{\lambda^d(\hspace{0.5pt}\Bbar_1(0)\backslash W_{\epsilon})}{\lambda^d(\hspace{0.5pt}\Bbar_1(0))}= \frac{\lambda^d(\hspace{0.5pt}\Bbar_1(0))-\lambda^d(W_{\epsilon})}{\lambda^d(\hspace{0.5pt}\Bbar_1(0))}\\
&\leqslant 1 - \Bigr( 1- \smallfrac{2}{\epsilon\sqrt{d-1}}\exp\bigl({-\smallfrac{\epsilon^2(d-1)}{2}}\bigr)\Bigl)\\
& = \smallfrac{2}{\epsilon\sqrt{d-1}}\exp\bigl(-\smallfrac{\epsilon^2(d-1)}{2}\bigr)
\end{align*}
as claimed.
\end{proof}

Now we will use the volume estimates from above in order to establish where points that we draw at random from the $d$-dimensional unit ball will be located and what mutual angles between the points we can expect. To make our proofs formal, we will employ in the rest of the chapter a probability space $(\Omega,\Sigma,\P)$ and random variables $X\colon\Omega\rightarrow\mathbb{R}^d$ with distribution $X\sim\mathcal{U}(\Bbar_1(0))$, compare Definition \ref{DFN-DISTR}.

\begin{thm}\label{UA-THM} Let $d\geqslant3$ and assume that we draw $n\geqslant2$ points $x^{\scriptscriptstyle(1)},\dots,x^{\scriptscriptstyle(n)}$ at random from the $d$-dimensional unit ball. Then
$$
\P\bigl[\|x^{\scriptscriptstyle(j)}\|\geqslant 1-\medfrac{2\ln n}{d} \text{ for all }j=1,\dots,n\bigr]\geqslant 1-\medfrac{1}{n}
$$
holds.
\end{thm}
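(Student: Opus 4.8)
The plan is to reduce the statement to the single--point tail estimate underlying the Surface Concentration Theorem and then apply a union bound over the $n$ points; no independence will actually be needed, only that each $x^{\scriptscriptstyle(j)}$ is marginally $\mathcal{U}(\Bbar_1(0))$--distributed.

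First I would dispose of the degenerate case. If $\frac{2\ln n}{d}>1$, then $1-\frac{2\ln n}{d}<0\leqslant\|x^{\scriptscriptstyle(j)}\|$ for every $j$, so the event in question has probability $1$ and there is nothing to prove. Hence we may assume $0<\frac{2\ln n}{d}\leqslant1$, the left inequality holding because $n\geqslant2$, and set $\epsilon:=\frac{2\ln n}{d}\in(0,1]$.

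Next, for a single $X\sim\mathcal{U}(\Bbar_1(0))$ I would compute, exactly as in the proof of Theorem \ref{SC-THM},
$$
\P\bigl[\|X\|<1-\epsilon\bigr]=\P\bigl[X\in\B_{1-\epsilon}(0)\bigr]=\frac{\lambda^d(\B_{1-\epsilon}(0))}{\lambda^d(\Bbar_1(0))}=\frac{(1-\epsilon)^d\lambda^d(\B_1(0))}{\lambda^d(\Bbar_1(0))}=(1-\epsilon)^d,
$$
and then invoke Lemma \ref{LEM-9-1}, which applies since $\epsilon\in(0,1]$, to get $(1-\epsilon)^d\leqslant\exp(-\epsilon d)=\exp(-2\ln n)=n^{-2}$.

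Finally, writing the complementary event as $\bigcup_{j=1}^{n}\{\|x^{\scriptscriptstyle(j)}\|<1-\epsilon\}$ and using subadditivity of $\P$ together with the previous bound gives
$$
\P\bigl[\exists\, j:\ \|x^{\scriptscriptstyle(j)}\|<1-\epsilon\bigr]\leqslant\Bigsum{j=1}{n}\P\bigl[\|x^{\scriptscriptstyle(j)}\|<1-\epsilon\bigr]\leqslant n\cdot n^{-2}=\medfrac{1}{n},
$$
and passing to complements yields the assertion. The calculation is entirely routine; the only step that warrants a moment's attention is the case distinction on whether $\frac{2\ln n}{d}$ exceeds $1$, needed so that Lemma \ref{LEM-9-1} is applicable, and this is a very mild obstacle rather than a genuine difficulty.
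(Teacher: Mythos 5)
Your proposal is correct and follows essentially the same route as the paper: the single-point tail bound $(1-\epsilon)^d\leqslant\exp(-\epsilon d)=n^{-2}$ from the Surface Concentration Theorem followed by a union bound over the $n$ points. In fact you are slightly more careful than the printed proof, which omits the degenerate case $\frac{2\ln n}{d}>1$ and writes the union bound as an equality where only subadditivity is justified.
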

\begin{proof} In view of the Surface Concentration Theorem, most of the unit ball's volume is located close to its surface and thus a point chosen uniformly at random is likely to be close to the surface. For a formal proof, we put $\epsilon=\frac{2\ln n}{d}$ and consider a random vector $X\sim\mathcal{U}(\Bbar_1(0))$. For fixed $1\leqslant j\leqslant n$ we employ Theorem \ref{SC-THM} to get
$$
\P\bigl[\|X^{\scriptscriptstyle(j)}\|<1-\epsilon\hspace{0.5pt}\bigr]=\frac{\lambda^d(\hspace{0.5pt}\B_{1-\epsilon}(0))}{\lambda^d(\hspace{0.5pt}\Bbar_{1}(0))}\leqslant\exp(-\epsilon{}d)=\exp(-2\ln n)=1/n^2.
$$
This implies
\begin{align*}
\P\bigl[\hspace{1.5pt}\forall\:j\colon\|X^{\scriptscriptstyle(j)}\|\geqslant 1-\epsilon\hspace{0.5pt}\bigr] &=1-\P\bigl[\hspace{1.5pt}\exists\:j\colon\|X^{\scriptscriptstyle(j)}\|\leqslant 1-\epsilon\hspace{0.5pt}\bigr]\\
&= 1-\Bigsum{j=1}{n}1/n^2=1-\medfrac{1}{n}
\end{align*}
as claimed.
\end{proof}

Although we get a trivial estimate if we sub $n=1$ into the Theorem, its proof showed that
$$
\P\bigl[\hspace{1pt}\|x\|\geqslant 1-\epsilon\hspace{1pt}\bigr] \geqslant 1-\exp(-\epsilon d)
$$
holds, so if $d$ is large enough and $\epsilon>0$ is not too small, a single point chosen at random will be $\epsilon$--close to the surface of $\Bbar_1(0)$ with a high probability. We continue by studying angles between points.

\begin{thm}\label{UO-THM} Let $d\geqslant3$ and assume that we draw $n\geqslant2$ points $x^{\scriptscriptstyle(1)},\dots,x^{\scriptscriptstyle(n)}$ uniformly at random from the $d$-dimensional unit ball. Then
$$
\P\bigl[\hspace{1pt}\bigl|\langle{}x^{\scriptscriptstyle(j)},x^{\scriptscriptstyle(k)}\rangle{}\bigr|\leqslant\medfrac{\sqrt{6\ln n}}{\sqrt{d-1}} \text{ for all }j\not=k\hspace{1pt}\bigr]\geqslant 1-\medfrac{1}{n}
$$
holds.
\end{thm}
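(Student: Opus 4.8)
The plan is to reduce the statement about inner products to the Waist Concentration Theorem (in the form of Corollary~\ref{WC-COR}) by conditioning on one of the two points and exploiting the rotational invariance of $\mathcal{U}(\Bbar_1(0))$. Fix $j\neq k$ and consider independent $X:=X^{\scriptscriptstyle(j)}$ and $Y:=X^{\scriptscriptstyle(k)}$, both uniform on $\Bbar_1(0)$. Condition on $Y=y$ with $y\neq0$; then $\langle X,y\rangle = \|y\|\cdot\langle X, y/\|y\|\rangle$, and since $\|y\|\leqslant1$ we have $|\langle X,y\rangle|\leqslant |\langle X,u\rangle|$ for the unit vector $u=y/\|y\|$. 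Because $X$ is rotationally invariant, $\langle X,u\rangle$ has the same distribution as the first coordinate $X_1$ of $X$. Hence, applying Corollary~\ref{WC-COR} with the threshold $\epsilon=\sqrt{6\ln n}/\sqrt{d-1}$ (legitimate since $\epsilon>0$), we get
$$
\P\bigl[|\langle X,Y\rangle| > \epsilon \,\big|\, Y=y\bigr] \leqslant \P\bigl[|X_1| > \epsilon\bigr] \leqslant \medfrac{2}{\epsilon\sqrt{d-1}}\exp\Bigl(-\medfrac{\epsilon^2(d-1)}{2}\Bigr).
$$
Integrating out $y$ (the event $Y=0$ has probability zero) gives the same unconditional bound for $\P[|\langle X^{\scriptscriptstyle(j)},X^{\scriptscriptstyle(k)}\rangle|>\epsilon]$.

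Next I would simplify the right-hand side for this particular $\epsilon$. We have $\epsilon^2(d-1)=6\ln n$, so $\exp(-\epsilon^2(d-1)/2)=\exp(-3\ln n)=n^{-3}$, and $\epsilon\sqrt{d-1}=\sqrt{6\ln n}$, so the bound becomes $\tfrac{2}{\sqrt{6\ln n}}\cdot n^{-3}$. For $n\geqslant2$ one has $\sqrt{6\ln n}\geqslant\sqrt{6\ln 2}>2$, hence the bound is at most $n^{-3}$. (Alternatively one can be slightly more generous and just note the factor $\tfrac{2}{\sqrt{6\ln n}}$ is bounded, but $\leqslant n^{-3}$ is the clean form to aim for.)

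Finally I would take a union bound over all ordered pairs $(j,k)$ with $j\neq k$; there are $n(n-1)<n^2$ such pairs, so
$$
\P\bigl[\exists\, j\neq k:\ |\langle x^{\scriptscriptstyle(j)},x^{\scriptscriptstyle(k)}\rangle| > \epsilon\bigr] \leqslant n(n-1)\cdot n^{-3} \leqslant n^{-1},
$$
and the complementary event is exactly the event in the statement, giving probability $\geqslant 1-\tfrac1n$.

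The only genuinely delicate point is the conditioning/rotational-invariance step: one must be careful that $\langle X,y\rangle$ given $Y=y$ really is distributed like $\|y\|X_1$, which uses that $\mathcal{U}(\Bbar_1(0))$ is invariant under orthogonal transformations, and then that passing from $\|y\|X_1$ to $X_1$ only shrinks the tail (since $\|y\|\leqslant 1$). Everything else — plugging in $\epsilon$, the elementary bound $\sqrt{6\ln n}>2$ for $n\geqslant2$, and the union bound — is routine. I would also remark in passing that, just as after Theorem~\ref{UA-THM}, the proof actually yields the stronger pointwise statement $\P[|\langle X,Y\rangle|\leqslant\epsilon]\geqslant 1-\tfrac{2}{\epsilon\sqrt{d-1}}\exp(-\epsilon^2(d-1)/2)$ for any single pair, valid whenever $\epsilon$ is not too small in the sense of Lemma~\ref{WC-LEM}.
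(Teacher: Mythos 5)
Your proposal is correct and follows essentially the same route as the paper: fix one point, use rotational invariance of the uniform distribution on the ball to reduce the inner product to a first coordinate, apply Corollary~\ref{WC-COR} with $\epsilon=\sqrt{6\ln n}/\sqrt{d-1}$ to get a per-pair bound of $n^{-3}$, and finish with a union bound. The only cosmetic difference is that the paper rotates the \emph{first} point to the north pole and applies the waist bound to the second, whereas you condition on the second and rotate the first; the two are mirror images of the same argument.
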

\begin{proof} We first consider the case of two points $x, y\in\Bbar_1(0)$. As we are interested only in their scalar product it should make no difference if we draw both points at random or if we treat the first point's direction as fixed (but the length still chosen at random) and draw only the second one at random. Since $\lambda^d(\{0\})=0$, the first points will be non-zero with probability one and we can thus arrange a new coordinate system, in which the unit ball is still the same, but in which our first points is directed towards the (new) north pole of the ball. In these coordinates we can then apply the waist concentration theorem to see that the other point is likely to be located close to the new equator. This means in turn that the scalar product of both points will be close to zero.

\begin{center}
\begin{picture}(190,100)(0,0)
\put(0,30){\rotatebox{-20}{\begin{tikzpicture}
 
   
\draw[->] (-2.2,0) -- (2.2,0) node[right] {$\mathbb{R}^{d-1}$};
\draw[->] (0,-1.8) -- (0,1.8) node[above] {$\mathbb{R}$};

\coordinate (A) at (0,0);

\coordinate (B1) at (-3,0.3);
\coordinate (B2) at (3,0.3);

\coordinate (B3) at (-3,-0.3);
\coordinate (B4) at (3,-0.3);

\draw (A) circle[radius=1.5];

 \tkzInterLC[R](B1,B2)(A,1.5cm)
     \tkzGetPoints{E}{F} 
  
 \tkzInterLC[R](B3,B4)(A,1.5cm) 
     \tkzGetPoints{G}{H}

    \draw (G)--(H);
     \draw (E)--(F);
     
     
     \begin{scope}
\path [clip] (B3)--(B4)--(B2)--(B1)--cycle;
\fill[pattern=north west lines,opacity=.6,draw] 
  (0,0) circle [radius=1.5];
\end{scope}

 


\node (1) at (-0.26,1.67) {\rotatebox{20}{\small$x$}};
\draw[-latex] (0,0)--(0,1.45);

\node (1) at (1.7,0.25) {\rotatebox{20}{\small$y$}};
\draw[-latex] (0,0)--(1.4,0.2);

\end{tikzpicture}}}
\end{picture}\nopagebreak[4]
\begin{fig} We apply he Waist Concentration Theorem\\with respect to the new north pole.\end{fig}
\end{center}

To make the above idea formal, we put $\epsilon=\sqrt{6\ln n}/\sqrt{d-1}$ and consider two random variables $X,Y\colon\Omega\rightarrow\mathbb{R}^d$ with $X,Y\sim\mathcal{U}(\Bbar_1(0))$. Then we can find for each $\omega_1\in\Omega$ with $X(\omega_1)\not=0$ an orthogonal linear map $T(\omega)\colon\mathbb{R}^d\rightarrow\mathbb{R}^d$ with $T(\omega_1)X(\omega_1)=(\|X(\omega_1)\|,0,\dots,0)$. Indeed, we can put $f^{\scriptscriptstyle(1)}=X(\omega)/\|X(\omega)\|$ and extend to an orthonormal basis $\mathcal{F}=\{ f^{\scriptscriptstyle(1)}, f^{\scriptscriptstyle(2)},\dots,f^{\scriptscriptstyle(d)}\}$ by first applying the basis extension theorem and then the Gram-Schmidt procedure. The map $T(\omega_1)$ is then defined as the linear extension of $T(\omega_1)f^{\scriptscriptstyle(j)}:=e^{\scriptscriptstyle(j)}$ where the $e^{\scriptscriptstyle(j)}$ are the standard unit vectors. We obtain $T(\omega_1)X(\omega_1)=T(\omega_1)(\|X(\omega_1)\|f^{\scriptscriptstyle(1)})=\|X(\omega_1)\|e^{\scriptscriptstyle(1)}$ and thus
$$
|\langle{}X(\omega_1),Y(\omega_2)\rangle{}| = |\langle{}T(\omega_1)X(\omega_1),T(\omega_1)Y(\omega_2)\rangle{}| = \|X(\omega_1)\|\cdot|\langle{}e^{\scriptscriptstyle(1)},T(\omega_1)Y(\omega_2)\rangle{}|
$$
for $(\omega_1,\omega_2)\in\Omega\times\Omega$ with $X(\omega_1)\not=0$. We observe that $T(\omega_1)Y\sim\mathcal{U}(\Bbar_1(0))$ holds for each fixed $\omega_1$ and that $\langle{}e^{\scriptscriptstyle(1)},T(\omega_1)Y(\omega_2)\rangle{}$ is the first coordinate of $T(\omega_1)Y(\omega_2)$. Therefore we can apply Corollary \ref{WC-COR} to compute
\begin{align*}
\P\bigl[|\langle{}e^{\scriptscriptstyle(1)},T(\omega_1)Y(\omega_2)\rangle|>\epsilon\bigr] & = \frac{\lambda^d(\{(z_1,\dots,z_d)\in\Bbar_1(0)\:;\:|z_1|>\epsilon\})}{\lambda^d(\hspace{.5pt}\Bbar_1(0))}\\
&\leqslant \medfrac{2}{\epsilon\sqrt{d-1}}\exp\bigl(-\medfrac{\epsilon^2(d-1)}{2}\bigr)\\
& = \medfrac{2}{\sqrt{6\ln n}}\exp\bigl(-\medfrac{6\ln n}{2}\bigr)\\
\phantom{\int}& \leqslant \medfrac{2}{n^3}
\end{align*}
where the last estimate holds as $1/\sqrt{6\ln n}\leqslant1$ is valid for $n\geqslant2$. After observing that in the first line below necessarily $X(\omega_1)\not=0$ holds and by using $\|X(\omega_1)\|\leqslant 1$, we get
\begin{align*}
\P\bigl[\hspace{1pt}\bigl|\langle{}X,Y\rangle{}\bigr|>\epsilon\hspace{1pt}\bigr] & = \P\bigl(\{(\omega_1,\omega_2)\in\Omega^2\:;\:\bigl|\bigl\langle{}X(\omega_1),Y(\omega_2)\bigr\rangle{}\bigr|>\epsilon\}\bigr)\\
\phantom{\int}& =\P\bigl(\{(\omega_1,\omega_2)\in\Omega^2\:;\:\|X(\omega)\|\cdot\bigl|\bigl\langle{}e^{\scriptscriptstyle(1)},T(\omega)Y(\omega)\bigr\rangle{}\bigr|>\epsilon\}\bigr)\\
\phantom{\int}& \leqslant\P\bigl(\{(\omega_1,\omega_2)\in\Omega^2\:;\:\bigl|\bigl\langle{}e^{\scriptscriptstyle(1)},T(\omega_1)Y(\omega_2)\bigr\rangle{}\bigr|>\epsilon\}\bigr)\\
& = \int_{\Omega}  \P\bigl[\hspace{1pt}\bigl|((T(\omega_1)Y(\omega_2))_1\bigr|>\epsilon\hspace{1pt}\bigr]\dd\P(\omega_1)\\
&\leqslant \int_{\Omega}1/n^3\dd\P = \medfrac{1}{n^3}
\end{align*}
where we employed Cavalieri's Principle and the estimates from above. Let finally, random vectors $X^{\scriptscriptstyle(1)},\dots,X^{\scriptscriptstyle(n)}\sim\mathcal{U}(\Bbar_1(0))$ be given. Then 
\begin{align*}
\P\bigl[\hspace{1.5pt}\forall\:j\not=k\colon|\langle{}X^{\scriptscriptstyle(j)},X^{\scriptscriptstyle(k)}\rangle|\leqslant\epsilon\hspace{1pt}\bigr] & = 1- \P\bigl[\hspace{1.5pt}\exists\:j\not=k\colon\bigl|\langle{}X^{\scriptscriptstyle(j)},X^{\scriptscriptstyle(k)}\rangle\bigr|>\epsilon \hspace{1pt}\bigr]\\
& = 1-{\textstyle{n\choose2}}\P\bigl[\bigl|\langle{}X,Y\rangle{}\bigr|>\epsilon\hspace{1pt}\bigr]\\
&\geqslant 1-\medfrac{n^2-n}{2}\cdot\medfrac{2}{n^3}\\
&\geqslant 1-\medfrac{1}{n}
\end{align*}
as desired.
\end{proof}

If we apply the estimate  of Theorem \ref{UO-THM} for only two points $x$, $y\in\Bbar_1(0)$ we get only a probability of greater or equal to $1/2$ for $\langle{}x,y\rangle{}$ being smaller than $\sqrt{6\ln 2}/\sqrt{d-1}$. Going through the proof of of the theorem yields however that 
$$
\P\bigl[\hspace{1.5pt}|\langle{}x,y\rangle{}|\leqslant\epsilon\hspace{1.5pt}\bigr]\geqslant 1-\medfrac{2}{\epsilon\sqrt{d-1}}\exp\bigl(-\medfrac{\epsilon^2(d-1)}{2}\bigr)
$$
from where we can see that for fixed $\epsilon>0$ and $d$ large enough the probability for the scalar product to be $\epsilon$--small will be close to one which means that two points picked at random from the high dimensional unit ball will be almost orthogonal with high probability.

\smallskip

We remark here, that when interpreting the scalar product as a measure for how close to orthogonal two vectors are, one usually does normalize the vectors first. In the above result we did not do so, since, by Theorem \ref{UA-THM} we know that the points considered in Theorem \ref{UO-THM} will have norm close to one with high probability, anyway. We leave a formal proof as Problem \ref{PROB-NORMALIZED-UA-THM}.

\section*{Problems}

\begin{probl} Let $\Bbar_1(0)$ denote the unit ball of $\mathbb{R}^d$.
\begin{compactitem}

\item[(i)] Use Lemma \ref{LEM-9-2} to compute $\lambda^d(\Bbar_1(0))$ for $d=1,\dots,10$.

\vspace{3pt}

\item[(ii)] Compute $\lim_{d\rightarrow\infty}\lambda^d(\Bbar_1(0))$.

\vspace{0pt}

\item[(iii)] Show that $\lambda^d(\Bbar_1(0))=\medfrac{\pi^{d/2}}{\Gamma(d/2+1)}$
holds, where $\Gamma\colon(0,\infty)\rightarrow\mathbb{R}$ denotes the Gamma function.
\end{compactitem}
\end{probl}

\smallskip

\begin{probl}\label{Pa0} Show that in Lemma \ref{WC-LEM} indeed $a_0\in(1,\sqrt{5}-1)$ holds.

\smallskip

{

\small

\textit{Hint:} Establish first that $\exp(x)\leqslant 1+x+x^2/2$ holds for $x\leqslant0$.

}
\end{probl}

\begin{probl}\label{PROB-NORMALIZED-UA-THM} Let $d\geqslant3$ and $n$ be such that $2\ln(n)\leqslant d$ holds. Show that
$$
\P\bigl[\bigl|\bigl\langle{}\medfrac{x^{\scriptscriptstyle(j)}}{\|x^{\scriptscriptstyle(j)}\|},\medfrac{x^{\scriptscriptstyle(k)}}{\|x^{\scriptscriptstyle(k)}\|}\bigr\rangle{}\bigr|\leqslant\medfrac{\sqrt{6\ln n}}{\sqrt{d-1}} \text{ for all }j\not=k\bigr]\geqslant 1-\medfrac{1}{n}
$$
holds, when $x^{\scriptscriptstyle(1)},\dots,x^{\scriptscriptstyle(n)}$ are drawn uniformly at random from the $d$--dimensional unit ball.

\smallskip

{

\small

\textit{Hint:} Use that $1\leqslant d^2/(d-2\ln n)^2$ holds and apply then the Theorem of Total Probability.

}

\end{probl}


\chapter{The Bernstein tail bound}\label{Ch-TB}

We start this chapter by explaining a general technique to obtain bounds  for $\P\bigl[\hspace{1.5pt}Y_1+\cdots+Y_d\geqslant a\hspace{.5pt}\bigr]$ where the $Y_i$ are independent random variables. A bound that arises from this `recipe' is often referred to as a Chernoff bound. The basic idea is given in the following Lemma \ref{CHERNOFF-RECEIPE} which establishes the `generic' Chernoff bound.

\begin{lem}\label{CHERNOFF-RECEIPE} Let $(\Omega,\Sigma,\P)$ be a probability space and let $Y_1,\dots,Y_n\colon\Omega\rightarrow\mathbb{R}$ be mutually independent random variables. Then
$$
\P\bigl[\hspace{1.5pt}Y_1+\cdots+Y_n\geqslant a\hspace{.5pt}\bigr]\leqslant \inf_{t>0}\exp(-ta)\prod_{i=1}^n\E(\exp(tY_i))
$$
holds for every $a>0$.
\end{lem}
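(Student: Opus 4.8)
The plan is to apply the standard exponential-moment (Chernoff) trick. First I would fix $t>0$ arbitrarily and observe that the event $\{Y_1+\cdots+Y_n\geqslant a\}$ coincides with the event $\{\exp(t(Y_1+\cdots+Y_n))\geqslant \exp(ta)\}$, since $x\mapsto\exp(tx)$ is strictly increasing. This rewriting is the one genuinely clever step; everything afterwards is routine.

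Next I would apply Markov's inequality to the nonnegative random variable $\exp(t(Y_1+\cdots+Y_n))$, which gives
$$
\P\bigl[Y_1+\cdots+Y_n\geqslant a\bigr]=\P\bigl[\exp\bigl(t(Y_1+\cdots+Y_n)\bigr)\geqslant\exp(ta)\bigr]\leqslant\frac{\E\bigl(\exp(t(Y_1+\cdots+Y_n))\bigr)}{\exp(ta)}=\exp(-ta)\,\E\Bigl(\prod_{i=1}^n\exp(tY_i)\Bigr).
$$
Then I would use the independence of $Y_1,\dots,Y_n$: the random variables $\exp(tY_1),\dots,\exp(tY_n)$ are again mutually independent (being measurable functions of independent variables), so the expectation of their product factors as the product of the expectations, yielding
$$
\P\bigl[Y_1+\cdots+Y_n\geqslant a\bigr]\leqslant\exp(-ta)\prod_{i=1}^n\E\bigl(\exp(tY_i)\bigr).
$$

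Finally, since this inequality holds for every $t>0$, I would take the infimum over $t>0$ on the right-hand side to obtain the claimed bound. The only minor points worth flagging are that Markov's inequality requires the random variable to be nonnegative (which $\exp(\,\cdot\,)$ is) and that the argument is valid even if some $\E(\exp(tY_i))$ is $+\infty$, in which case the bound is vacuous for that value of $t$ but the infimum may still be finite. I do not anticipate a real obstacle here; the statement is essentially a packaging of Markov's inequality plus independence, and the proof is a few lines.
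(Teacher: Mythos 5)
Your proposal is correct and follows exactly the same route as the paper's proof: the monotonicity of $\exp(t\,\cdot\,)$, Markov's inequality, factorization of the expectation by independence, and the infimum over $t>0$. Your remarks on nonnegativity and on the case $\E(\exp(tY_i))=+\infty$ are sound additional care that the paper's version leaves implicit.
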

\begin{proof} For fixed $t>0$ the function $\exp(t\cdot\shortminus)$ is increasing. We put $Y=Y_1+\cdots+Y_n$ and get 
$$
\P[Y\geqslant a] = \P[\exp(tY)\geqslant \exp(ta)]\leqslant\frac{\E(\exp(tY))}{\exp(ta)}
$$
by applying the Markov bound. Now we take the infimum over $t>0$ and employ the independence to get
$$
\P[Y\geqslant a]\leqslant \inf_{t>0}\exp(-ta)\E(\exp(t\Bigsum{i=1}{n} tY_i))=\inf_{t>0}\exp(-ta)\prod_{i=1}^n\E(\exp(tY_i))
$$
as desired.
\end{proof}

The classical Chernoff bound recipe is now to compute or estimate the right hand side by exploiting additional information on the $Y_i$ that is given in concrete situations. Readers unfamiliar with Chernoff bounds can find a classical example on how the recipe works in Problem \ref{PROBL-SIMPLE-CHERNOFF} where the $Y_i$ are independent Bernoulli random variables. In Theorem \ref{MTB-THM} below the additional knowledge comes from the assumption that the moments of each $Y_i$ grow at most like factorial.

\begin{thm}\label{MTB-THM} Let $(\Omega,\Sigma,\P)$ be a probability space and let $Y_1,\dots,Y_d\colon\Omega\rightarrow\mathbb{R}$ be mutually independent random variables with expectation zero and $|\E(Y_i^k)|\leqslant k!/2$ for $i=1,\dots,d$ and $k\geqslant2$. Then we have
$$
\P\bigl[|Y_1+\cdots+Y_d|\geqslant a\bigr]\leqslant 2\exp\bigl(-C\min\bigl(\medfrac{a^2}{d},a\bigr)\bigr)
$$
for every $a>0$ and with $C=1/4$.
\end{thm}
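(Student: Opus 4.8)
The plan is to apply the generic Chernoff bound from Lemma \ref{CHERNOFF-RECEIPE} to $Y = Y_1+\cdots+Y_d$ and, separately (by symmetry), to $-Y = (-Y_1)+\cdots+(-Y_d)$; note the hypotheses on the moments are symmetric in the sign of each $Y_i$, so whatever bound we obtain for $\P[Y\geqslant a]$ will also hold for $\P[-Y\geqslant a]=\P[Y\leqslant -a]$, and adding the two gives the factor $2$ in front. So it suffices to bound $\P[Y\geqslant a]\leqslant\inf_{t>0}\exp(-ta)\prod_{i=1}^d\E(\exp(tY_i))$.

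The heart of the argument is to estimate the moment generating function $\E(\exp(tY_i))$ for small $t>0$. First I would expand $\exp(tY_i)=\sum_{k\geqslant0}t^kY_i^k/k!$ and take expectations term by term (justifying the interchange via the moment hypothesis, which forces absolute convergence for $t<1$). Since $\E(Y_i)=0$ the $k=0$ and $k=1$ terms contribute $1$ and $0$, and using $|\E(Y_i^k)|\leqslant k!/2$ for $k\geqslant2$ we get
$$
\E(\exp(tY_i)) \leqslant 1 + \sum_{k\geqslant2}\frac{t^k}{k!}\cdot\frac{k!}{2} = 1 + \frac{1}{2}\sum_{k\geqslant2}t^k = 1 + \frac{t^2}{2(1-t)}
$$
for $0<t<1$. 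Restricting further to $0<t\leqslant1/2$ gives $1-t\geqslant1/2$, hence $\E(\exp(tY_i))\leqslant 1+t^2\leqslant\exp(t^2)$, using $1+x\leqslant\exp(x)$ (which is essentially Lemma \ref{LEM-9-1}'s inequality). Multiplying over $i=1,\dots,d$ yields $\prod_{i=1}^d\E(\exp(tY_i))\leqslant\exp(dt^2)$, so
$$
\P[Y\geqslant a]\leqslant \inf_{0<t\leqslant1/2}\exp(dt^2-ta).
$$

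It remains to optimize $t\mapsto dt^2-ta$ over $0<t\leqslant1/2$. The unconstrained minimizer is $t^\ast=a/(2d)$. If $t^\ast\leqslant1/2$, i.e. $a\leqslant d$, plugging in gives exponent $-a^2/(4d)$. If $a>d$, then $t^\ast>1/2$ and, since the parabola is decreasing on $(0,t^\ast)$, the best admissible choice is $t=1/2$, giving exponent $d/4 - a/2 < d/4 - a/4 = -(a-d)/4 \leqslant -a/4 \cdot$ (here one checks $d/4 - a/2 \leqslant -a/4$ is equivalent to $d\leqslant a$, which holds). In both cases the exponent is at most $-\tfrac14\min(a^2/d,\,a)$, so $\P[Y\geqslant a]\leqslant\exp(-\tfrac14\min(a^2/d,a))$; combining with the symmetric bound for $\P[Y\leqslant -a]$ finishes the proof with $C=1/4$. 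I expect the main obstacle to be bookkeeping in the case analysis for the optimization and being careful that the truncation $t\leqslant1/2$ is what links the two regimes $a^2/d$ and $a$; the MGF estimate itself is routine once one commits to the $t\leqslant1/2$ cutoff.
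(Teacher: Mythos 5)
Your proof is correct and follows essentially the same route as the paper: the same moment generating function estimate $\E(\exp(tY_i))\leqslant 1+\frac{t^2}{2(1-t)}\leqslant 1+t^2\leqslant\exp(t^2)$ for $0<t\leqslant1/2$, the same constrained optimization with $t=\min\bigl(\frac{a}{2d},\frac{1}{2}\bigr)$ yielding the exponent $-\min\bigl(\frac{a^2}{4d},\frac{a}{4}\bigr)$, and the same symmetry argument producing the factor $2$. The one blemish is the intermediate chain $d/4-a/2<d/4-a/4=-(a-d)/4\leqslant-a/4$ in the case $a>d$, whose last step is false as written; however, your parenthetical direct verification that $d/4-a/2\leqslant-a/4$ is equivalent to $d\leqslant a$ is what actually carries the argument and is correct, so this is a slip in exposition rather than a gap.
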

\begin{proof} Let $Y$ be one of the random variables $\pm Y_1,\dots,\pm Y_d$ and $0<t\leqslant 1/2$. Then we estimate
\begin{align*}
\E(\exp(tY)) & = \bigl|\E\bigl(1+tY +\Bigsum{k=2}{\infty}\medfrac{(tY)^k}{k!}\bigr)\bigr| \leqslant 1 + \Bigsum{k=2}{\infty}\medfrac{t^k|\E(Y^k)|}{k!}\\
& \leqslant 1 + \Bigsum{k=2}{\infty}\medfrac{t^k}{2} = 1 + \smallfrac{1}{2}\bigl(\medfrac{1}{1-t} - (t+1)\bigr)\\
& = 1 + \medfrac{1}{2}\medfrac{t^2}{1-t}\leqslant 1+t^2\leqslant \exp(t^2)
\end{align*}
where the last equality can easily be checked by means of calculus. Now we use Lemma \ref{CHERNOFF-RECEIPE} and get
\begin{align*}
\P\bigl[\hspace{1.5pt}Y_1+\cdots+Y_d\geqslant a\hspace{.5pt}\bigr] & \leqslant \inf_{t>0}\exp(-ta)\prod_{i=1}^d\E(\exp(tY_i))\\
& \leqslant \inf_{0<t\leqslant1/2}\exp(-ta)\prod_{i=1}^d\exp(t^2)\\
& \leqslant \inf_{0<t\leqslant1/2}\exp(-ta+dt^2).
\end{align*}
By solving $\frac{\dd}{\dd t}(-ta+dt^2)=-a+2dt=0$ for $t$ and considering the constraint $t\leqslant1/2$ this leads to $t=\min(\frac{a}{2d},\frac{1}{2})$, and further to
$$
-ta+dt^2 \leqslant -\medfrac{1}{2}ta -\medfrac{1}{2}ta + dt\medfrac{a}{2d}=-\medfrac{1}{2}ta \leqslant -\medfrac{1}{2}\min\bigl(\medfrac{a}{2d},\medfrac{1}{2}\bigr)a = -\min\bigl(\medfrac{a^2}{4d},\medfrac{a}{4}\bigr).
$$ 
This implies
$$
\P\bigl[\hspace{1.5pt}Y_1+\cdots+Y_d\geqslant a\hspace{.5pt}\bigr] \leqslant \exp\Bigl(-\min\bigl(\medfrac{a^2}{4d},\medfrac{a}{4}\bigr)\Bigr)
$$
and similiarly we get
$$
\P\bigl[\hspace{1.5pt}Y_1+\cdots+Y_d\leqslant -a\hspace{.5pt}\bigr] = \P\bigl[-Y_1-\cdots-Y_d\geqslant a\hspace{.5pt}\bigr] \leqslant \exp\Bigl(-\min\bigl(\medfrac{a^2}{4d},\medfrac{a}{4}\bigr)\Bigr).
$$
Consequently,
$$
\P\bigl[\hspace{1.5pt}\bigl|Y_1+\cdots+Y_d\bigr|\geqslant a\hspace{.5pt}\bigr] \leqslant 2\exp\Bigl(-\min\bigl(\medfrac{a^2}{4d},\medfrac{a}{4}\bigr)\Bigr)
$$
as desired.
\end{proof}

\section*{Problems}

\begin{probl}\label{PROBL-SIMPLE-CHERNOFF}(Classic example for a Chernoff bound) Let $Y_1,\dots,Y_n$ be independent Bernoulli random variables with $\P[X_i=1]=p\in[0,1]$ and $Y=Y_1+\cdots+Y_n$. Let $\delta>0$.

\begin{compactitem}

\item[(i)] Show that $\E(\exp(tY_i))\leqslant\exp(p(\exp(t)-1))$ holds for every $t>0$.

\vspace{3pt}

\item[(ii)] Use Lemma \ref{CHERNOFF-RECEIPE} to conclude the following classic Chernoff bound
$$
\P\bigl[\hspace{.5pt}X\geqslant(1+\delta)np\hspace{1.5pt}\bigr]\leqslant\Bigl(\smallfrac{\e^{\delta}}{(1+\delta)^{1+\delta}}\Bigr)^{np}.
$$

{

\small

\emph{Hint: }It is often not necessary to compute the infimum in Lemma \ref{CHERNOFF-RECEIPE} explicitly. Here, one can for example simply choose $t=\log(1+\delta)$.

}

\vspace{3pt}

\item[(iii)] Assume you are rolling a fair dice $n$ times. Apply (ii) to estimate the probability to roll a six in at least 70\% of the experiments.

\vspace{3pt}

\item[(iv)] Compare the estimate of (ii) with what you get when applying the Markov bound respectively the Chebychev bound, instead. Run a simulation of the experiment to test how tight the predictions of the three bounds are.

\end{compactitem}

\end{probl}

\begin{probl}\label{PROBL-EXP-CHERNOFF} (`naive exponential estimate') Let $Y_1,\dots,Y_d$ be independent random variables and assume that $|\E(Y_i^k)|\leqslant k!$ holds for all $k\geqslant0$ and $i=1,\dots,d$.

\begin{compactitem}

\item[(i)] Use the series expansion of $\exp(\cdot)$ and the assumption to get
$$
\E(\exp(tY_i))\leqslant \Bigsum{k=0}{\infty}t^k=\left\{\begin{array}{cl}\textfrac{1}{1-t} &\text{ for } t\in(-1,1),\\\infty & \text{otherwise.}\end{array}\right.
$$

\item[(ii)] Show by means of calculus that
$$
\inf_{t\in(0,1)}\exp(-ta)\prod_{i=1}^d\smallfrac{1}{1-t}=\left\{\begin{array}{cl}(\textfrac{a}{d})^d\exp(d-a) &\text{ if } a>d,\\ 1 & \text{otherwise.}\end{array}\right.
$$

\vspace{4pt}

\item[(iii)] Derive an estimate for $\P\bigl[|Y_1+\cdots+Y_d|\geqslant a\bigr]$ from the above.

\vspace{4pt}

\item[(iv)] Compare the bound in (iii) with the bound of Theorem \ref{MTB-THM}.

\end{compactitem}
\end{probl}


\chapter{Gaussian random vectors}\label{Ch-RV}

We now turn to points that are chosen at random from the whole space according to a Gaussian distribution with mean zero and unit variance. Our first result is the so-called \emph{Gaussian Annulus Theorem} which states that a point chosen at random will, with high probability, be located in a small annulus around the sphere with radius $\sqrt{d}$. On a first sight that might seem surprising as the density function has its largest values around zero and decays if we move away from zero. The results of Chapter \ref{Ch-VC} however show that close to the origin there is located only little volume and since for the probability to be in a Borel set $A\subseteq\mathbb{R}^d$ we ought to integrate the density function over $A$, the lack of volume around the origin makes this integral small if $A$ is located in the vicinity of zero. At a certain radius then the concentration of volume compensates the smaller values of the density function. Going beyond this radius, the small values of the density function determine the value of the integral and make it small again.

\smallskip

Before we can start, we need to fill in several facts about Gaussian random vectors, namely that

\begin{compactitem}

\item[\filledsquare] spherical Gaussian random vectors are precisely those which are independently coordinate-wise Gaussian (Proposition \ref{PROP-9-1}),\vspace{1pt}
\item[\filledsquare] a linear combination of normal random variables is  Gaussian with mean zero and variance given by the sum of the squares of the coefficients (Proposition \ref{LK-GAUSS-LEM}).

\end{compactitem}

The reader familiar with the above can skip the proofs and jump directly to Lemma \ref{LEM-10-1} which will be the final preparation for the Gaussian Annulus Theorem.

\smallskip

\begin{prop}\label{PROP-9-1} Let $(\Omega,\Sigma,\P)$ be a probability space and $X\colon\Omega\rightarrow\mathbb{R}^d$ be a random vector with coordinate functions $X_1,\dots,X_d$. Then $X$ is spherically Gaussian distributed with mean zero and variance $\sigma>0$ if and only if its coordinates are independent and
$$
\P[\hspace{1pt}X_i\in A\hspace{1pt}] = {\medfrac{1}{\sqrt{2\pi}\sigma}} \int_A\exp\bigl(-\medfrac{x^2}{2\sigma^2}\bigr)\dd\lambda(x)
$$
holds for every Borel set $A\subseteq\mathbb{R}$ and every $i=1,\dots,d$.
\end{prop}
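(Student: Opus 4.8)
The plan is to prove both implications by unwinding the definitions and using the fact that the Gaussian density factors as a product. First I would recall that by Definition \ref{DFN-DISTR}(ii), $X\sim\mathcal{N}(0,\sigma^2,\mathbb{R}^d)$ means that for every Borel set $A\subseteq\mathbb{R}^d$,
$$
\P[X\in A] = \medfrac{1}{(2\pi\sigma^2)^{d/2}}\int_A\exp\bigl(-\medfrac{\|x\|^2}{2\sigma^2}\bigr)\dd\lambda^d(x),
$$
and that the key algebraic observation is the factorisation $\exp(-\|x\|^2/(2\sigma^2)) = \prod_{i=1}^d\exp(-x_i^2/(2\sigma^2))$ together with $(2\pi\sigma^2)^{d/2} = \prod_{i=1}^d(2\pi\sigma^2)^{1/2}$.

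For the ``if'' direction, I would assume the coordinates $X_1,\dots,X_d$ are independent with the stated one-dimensional Gaussian law. Then the joint distribution of $X$ is the product measure of the individual distributions, so for a Borel box $A = A_1\times\cdots\times A_d$ I can compute $\P[X\in A] = \prod_{i=1}^d\P[X_i\in A_i]$, and substituting the one-dimensional densities and using the factorisation above recognises this as $\int_A (2\pi\sigma^2)^{-d/2}\exp(-\|x\|^2/(2\sigma^2))\dd\lambda^d(x)$ by Fubini/Tonelli. Since boxes generate the Borel $\sigma$-algebra on $\mathbb{R}^d$ and both sides are measures (finite, hence determined by their values on a generating $\pi$-system), the identity extends to all Borel sets $A$, which is precisely the statement $X\sim\mathcal{N}(0,\sigma^2,\mathbb{R}^d)$.

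For the ``only if'' direction, I would assume $X\sim\mathcal{N}(0,\sigma^2,\mathbb{R}^d)$. To get the marginal law of $X_i$, take $A = \mathbb{R}\times\cdots\times A_i\times\cdots\times\mathbb{R}$ in the defining integral and apply Fubini to integrate out the other $d-1$ coordinates; each such integration contributes a factor $(2\pi\sigma^2)^{1/2}$ from $\int_{\mathbb{R}}\exp(-t^2/(2\sigma^2))\dd t = \sqrt{2\pi}\,\sigma$, leaving exactly $\P[X_i\in A_i] = (\sqrt{2\pi}\,\sigma)^{-1}\int_{A_i}\exp(-x^2/(2\sigma^2))\dd\lambda(x)$. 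For independence, one computes $\P[X\in A_1\times\cdots\times A_d]$ directly from the defining integral, factors the integrand, applies Fubini, and recognises the result as $\prod_{i=1}^d\P[X_i\in A_i]$; since this holds for all boxes, the joint law is the product of the marginals, i.e., the coordinates are independent. The main obstacle is not conceptual but bookkeeping: one must be careful that the Gaussian integral $\int_{\mathbb{R}}\exp(-t^2/(2\sigma^2))\dd t = \sqrt{2\pi}\,\sigma$ is available (it can be taken as known, or proved by the standard polar-coordinates trick on the square of the integral), and one must invoke the uniqueness of measures agreeing on a generating $\pi$-system to pass from boxes to arbitrary Borel sets rather than trying to verify the identity directly for general $A$.
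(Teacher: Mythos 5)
Your proposal is correct and follows essentially the same route as the paper: factor the Gaussian density, use Fubini on product sets to identify $\P[X\in A_1\times\cdots\times A_d]$ with $\prod_i\P[X_i\in A_i]$, obtain the marginals by integrating out the remaining coordinates, and extend from boxes to all Borel sets via the generating-$\pi$-system argument. Your explicit mention of the uniqueness theorem for measures and of the normalising integral $\int_{\mathbb{R}}\exp(-t^2/(2\sigma^2))\dd\lambda(t)=\sqrt{2\pi}\,\sigma$ only makes precise what the paper leaves implicit.
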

\begin{proof} Let $A=A_1\times\cdots\times A_d\subseteq\mathbb{R}^d$ be given with Borel sets $A_i\subseteq\mathbb{R}$. By Fubini's Theorem we compute
\begin{align*}
\P[\hspace{1pt}X\in A\hspace{1pt}] &= {\medfrac{1}{(2\pi\sigma^2)^{d/2}}} \int_A\exp\bigl(-\medfrac{\|x\|^2}{2\sigma^2}\bigr)\dd\lambda^d(x)\\
& = {\medfrac{1}{(2\pi\sigma^2)^{d/2}}} \int_A\exp\bigl(-\medfrac{x_1^2+\cdots+x_d^2}{2\sigma^2}\bigr)\dd\lambda^d(x)\\
& = {\medfrac{1}{(2\pi\sigma^2)^{d/2}}} \int_A\exp\bigl(-\medfrac{x_1^2}{2\sigma^2}\bigr)\cdots\exp\bigl(-\medfrac{x_d^2}{2\sigma^2}\bigr)\dd\lambda^d(x)\\
& = \medfrac{1}{\sqrt{2\pi}\sigma} \int_{A_1}\exp\bigl(-\medfrac{x_1^2}{2\sigma^2}\bigr)\dd\lambda(x_1)\cdots \medfrac{1}{\sqrt{2\pi}\sigma}\int_{A_d}\exp\bigl(-\medfrac{x_d^2}{2\sigma^2}\bigr)\dd\lambda(x_d)\\
\phantom{\int}& = \P[\hspace{.5pt}X_1\in A_1\hspace{.5pt}]\cdots\P[\hspace{.5pt}X_d\in A_d\hspace{.5pt}]
\end{align*}
where the first equality is true if $X\sim\mathcal{N}(0,1)$ holds and the last if $X_i\sim\mathcal{N}(0,1)$ for each $i=1,\dots,d$. This computation now shows both implications.

\smallskip

\textquotedblleft{}$\Rightarrow$\textquotedblright{} Let $B\subseteq\mathbb{R}$ be Borel and let $1\leqslant i\leqslant d$ be fixed. Then we have $\P[X_i\in B] = \P[X\in\mathbb{R}^{i-1}\times{}B\times\mathbb{R}^{d-i}]$ and reading the computation above from the beginning until line four shows
$$
\P[\hspace{0.5pt}X_i\in B\hspace{0.5pt}] = \medfrac{1}{\sqrt{2\pi}\sigma} \int_{B}\exp\bigl(-\medfrac{x_i^2}{2\sigma^2}\bigr)\dd\lambda(x_i)
$$
as all the other integrals are equal to one.

\smallskip

\textquotedblleft{}$\Leftarrow$\textquotedblright{} Let $A=A_1\times\cdots\times A_d\subseteq\mathbb{R}^d$ with bounded and open intervals $A_i\subseteq\mathbb{R}$ be an open cuboid. Since the $X_i$'s are independent, we get $\P[\hspace{0.5pt}X\in A\hspace{0.5pt}] = \P[\hspace{0.5pt}X_1\in A_1\hspace{0.5pt}]\cdots\P[\hspace{0.5pt}X_d\in A_d\hspace{0.5pt}]$. Reading now our initial computation backwards until the very first equality, we get
$$
\P[\hspace{0.5pt}X\in A\hspace{0.5pt}]= {\medfrac{1}{(2\pi\sigma^2)^{d/2}}} \int_A\exp\bigl(-\medfrac{\|x\|^2}{2\sigma^2}\bigr)\dd\lambda^d(x)
$$
which extends now to every Borel set in $\mathbb{R}^d$ as the cuboids are a generator.
\end{proof}

As mentioned we need also to show that the distribution of the sum of Gaussian random variables is again Gaussian. We start with the case of two summands.

\begin{lem}\label{SUM-GAUSS-LEM-0} Let $X,\,Y\colon\Omega\rightarrow\mathbb{R}$ be independent real random variables with probability density functions $\rho$ and $\sigma$. Then the random variable $X+Y$ has a probability density function which is given by the convolution $\rho\Conv\sigma\colon\mathbb{R}\rightarrow\mathbb{R}$, $(\rho\Conv\sigma)(s)=\int_{\mathbb{R}}\rho(s-t)\sigma(t)\dd\lambda(t)$.
\end{lem}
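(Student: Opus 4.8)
The plan is to compute the cumulative distribution function of $X+Y$ directly and to recognise its integrand as $\rho\Conv\sigma$. First I would use independence together with the fact that the joint density of $(X,Y)$ on $\mathbb{R}^2$ is the product $(x,y)\mapsto\rho(x)\sigma(y)$ — a standard fact, provable by the same Fubini argument used in Proposition \ref{PROP-9-1} — to write, for a fixed $s\in\mathbb{R}$,
$$
\P[X+Y\leqslant s] = \int_{\{(x,y)\,:\,x+y\leqslant s\}}\rho(x)\sigma(y)\dd\lambda^2(x,y).
$$

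Next I would apply Tonelli's theorem, legitimate since the integrand is non-negative and measurable and the domain $\{x+y\leqslant s\}$ is a Borel set, to turn this into an iterated integral over $y\in\mathbb{R}$ and then $x\in(-\infty,s-y]$, and perform the substitution $u=x+y$ (so $x=u-y$, $\dd x=\dd u$) in the inner integral. This yields
$$
\P[X+Y\leqslant s] = \int_{\mathbb{R}}\sigma(y)\Bigl(\int_{-\infty}^{s}\rho(u-y)\dd\lambda(u)\Bigr)\dd\lambda(y) = \int_{-\infty}^{s}\Bigl(\int_{\mathbb{R}}\rho(u-y)\sigma(y)\dd\lambda(y)\Bigr)\dd\lambda(u),
$$
where the last step is another application of Tonelli. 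The inner integral on the right is exactly $(\rho\Conv\sigma)(u)$, so $\P[X+Y\leqslant s]=\int_{-\infty}^{s}(\rho\Conv\sigma)(u)\dd\lambda(u)$ for every $s\in\mathbb{R}$, which by definition says that $\rho\Conv\sigma$ is a probability density function for $X+Y$. Letting $s\to\infty$ one also sees $\int_{\mathbb{R}}(\rho\Conv\sigma)=1$, and non-negativity is clear, confirming it is genuinely a density.

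The computation is essentially routine; the only points requiring care are the measurability of $(x,y)\mapsto\rho(x)\sigma(y)$ and of the half-plane $\{x+y\leqslant s\}$ (both immediate), the bookkeeping of the change of variables inside the iterated integral, and the two Fubini/Tonelli invocations. I expect the main — and really the only — obstacle is making these precise, in particular ensuring that the partial integral $y\mapsto\rho(u-y)\sigma(y)$ defining $\rho\Conv\sigma$ is $\lambda$-integrable for $\lambda$-almost every $u$; this again follows from Tonelli, since $\int_{\mathbb{R}^2}\rho(u-y)\sigma(y)\dd\lambda^2(u,y)=\int_{\mathbb{R}}\rho=1<\infty$ after the translation-invariant change of variables.
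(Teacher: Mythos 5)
Your proposal is correct and follows essentially the same route as the paper: both form the product joint density $\rho(x)\sigma(y)$ from independence, integrate it over the region where $x+y$ lies in a generating family of sets, and straighten that region by a change of variables to exhibit $\rho\Conv\sigma$ as the density. The only cosmetic difference is that you work with half-planes $\{x+y\leqslant s\}$ and perform a one-dimensional translation inside a Tonelli iterated integral, whereas the paper works with strips $\{a<x+y<b\}$ and applies the two-dimensional change-of-variables formula for the shear $\psi(s,t)=(s,t-s)$; your variant is marginally more elementary since it avoids invoking the multivariate transformation theorem.
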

\begin{proof} We first show that $\varphi\colon\mathbb{R}^2\rightarrow\mathbb{R}$, $\varphi(x,y):=\rho(x)\sigma(y)$ is a probability density function for the random vector $[\begin{smallmatrix}X\\Y\end{smallmatrix}]\colon\Omega\rightarrow\mathbb{R}^2$. Indeed, for a rectangle $[a,b]\times[c,d]\subseteq\mathbb{R}^2$ with $a<b$ and $c<d$ we compute
\begin{align*}
\P_{\textstyle[\begin{smallmatrix}X\\Y\end{smallmatrix}]}\bigl([a,b]\times[c,d]\bigr) & = \P\bigl[\hspace{0.5pt}(X,Y)\in [a,b]\times[c,d]\hspace{0.5pt}\bigr]\\ 
& = \P\bigl[\hspace{1pt}X\in [a,b] \text{ and } Y\in[c,d]\hspace{0.5pt}\bigr]\\
& = \P\bigl[\hspace{1pt}X\in [a,b]\bigr]\cdot\P\bigl[\hspace{1pt}Y\in[c,d]\hspace{0.5pt}\bigr]\\
& = \int_{[a,b]}\rho\dd\lambda\,\cdot\,\int_{[c,d]}\sigma\dd\lambda\\
& = \int_{[a,b]\times[c,d]}\varphi\dd\lambda^2
\end{align*}
where we used the independence of $X$ and $Y$ in the second equality and Tonelli's Theorem in the last. Since the rectangles generate the Borel $\sigma$-algebra, we get
\begin{equation}\label{EQ-7-0}
\P_{\textstyle[\begin{smallmatrix}X\\Y\end{smallmatrix}]}(A) = \int_A\varphi\dd\lambda^2
\end{equation}
for every Borel set $A$ which establishes the claim. Now we compute for $a<b$
\begin{equation}\label{EQ-7-1}
\begin{aligned}
\P_{X+Y}\bigl((a,b])\bigr) &= \P\bigl[X+Y\in (a,b)\bigr]= \P\bigl(\bigl\{\omega\in\Omega\:;\:X(\omega)+Y(\omega)\in(a,b)\bigr\}\bigr)\\
& = \P\bigl(\bigl\{\omega\in\Omega\:;\:\bigl[\begin{smallmatrix}X(\omega)\\Y(\omega)\end{smallmatrix}\bigr]\in D \bigr\}\bigr) = \P_{\textstyle[\begin{smallmatrix}X\\Y\end{smallmatrix}]}(D) =  \int_D\varphi\dd\lambda^2
\end{aligned}
\end{equation}
where $D = \{(s,t)\in\mathbb{R}^2\:;\:a<s+t<b\}$ is a diagonal strip in the plane as illustrated on the left of following picture and \eqref{EQ-7-0} was used in the last equality. We put $C:=\mathbb{R}\times(a,b)$ as shown on the right of the following picture and see that $\psi\colon C\rightarrow D$, $\psi(s,t)=(s,t-s)$ is a diffeomorphism between open sets and $\det(\operatorname{J}_{\psi}(s,t))=1$ holds for all $(s,t)\in C$. 
\begin{center}
\begin{picture}(375,90)(0,0)
\put(165,30){\begin{tikzpicture}[scale=0.8]
\draw[->] (-1,0) -- (1,0) node[above, midway] {$\psi$};
\draw[->] (1,-0.3) -- (-1,-0.3) node[below, midway] {$\psi^{-1}$};
\end{tikzpicture}}
\put(0,-2.5){\begin{tikzpicture}[scale=0.8]
 
   
\draw[->] (-1.5,0) -- (3.8,0) node[right] {$s$};
\draw[->] (0,-0.5) -- (0,3.1) node[above] {$t$};

\coordinate (C1) at (-2,0);
\coordinate (C2) at (4.3,0);
  
\coordinate (C3) at (0,-0.5);
\coordinate (C4) at (0,3.1);

\coordinate (A1) at (-0.5,3);
\coordinate (A2) at (3.2,-0.5);
\coordinate (A3) at (-2,3);
\coordinate (A4) at (1.7,-0.5);

\draw(A1)--(A2);
\draw(A3)--(A4);

  \begin{scope}
\fill(A1)--(A2)--(A4)--(A3)--cycle[pattern=north east lines,opacity=.6];
\end{scope}

 \tkzInterLL(A1,A2)(C1,C2)
     \tkzGetPoint{E}
     
\draw (E) -- +(0,0.17); 
     
\node[] at (2.7,0.37) {\small$b$};

 \tkzInterLL(A3,A4)(C1,C2)
     \tkzGetPoint{F}
     
     \draw (F) -- +(0,-0.17); 
     
\node[] at (1.17,-0.37) {\small$a$};

 \tkzInterLL(A1,A2)(C3,C4)
     \tkzGetPoint{E}
     
\draw (E) -- +(0.17,0); 
     
\node[] at (0.37,2.55) {\small$b$};

 \tkzInterLL(A3,A4)(C3,C4)
     \tkzGetPoint{F}
     
     \draw (F) -- +(-0.17,0); 
     
\node[] at (-0.37,1.08) {\small$a$};

\node at (-2.1,1.3) {\small$s\mapsto a\!-\!s$};
\draw (-2,1.5)--(-1.48,2.45);
           
\node at (2.4,2.4) {\small$s\mapsto b\!-\!s$};
\draw (2.35,2.2)--(1.21,1.45);

\node (W) at (-1.28,3.4) {\small$D$};
\draw (-1.2,3.2)--(-0.8,2.5);
\end{tikzpicture}}

\put(230,0){\begin{tikzpicture}[scale=0.8]
 
   
\draw[->] (-1.5,0) -- (3.8,0) node[right] {$s$};
\draw[->] (0,-0.5) -- (0,3.1) node[above] {$t$};

\coordinate (C1) at (-2,0);
\coordinate (C2) at (4.3,0);
  
\coordinate (C3) at (0,-0.5);
\coordinate (C4) at (0,3.1);

\coordinate (A1) at (-1.5,2.4);
\coordinate (A2) at (3.8,2.4);
\coordinate (A3) at (-1.5,1.08);
\coordinate (A4) at (3.8,1.08);

\draw(A1)--(A2);
\draw(A3)--(A4);

  \begin{scope}
\fill(A1)--(A2)--(A4)--(A3)--cycle[pattern=north east lines,opacity=.6];
\end{scope}

 \tkzInterLL(A1,A2)(C3,C4)
     \tkzGetPoint{E}
     
\draw (E) -- +(0.15,0.15); 
     
\node[] at (0.29,2.7) {\small$b$};

 \tkzInterLL(A3,A4)(C3,C4)
     \tkzGetPoint{F}
     
     \draw (F) -- +(-0.15,-0.15); 
     
\node[] at (-0.29,0.8) {\small$a$};

\node (W) at (4.28,1.6) {\small$C$};
\draw (4.1,1.65)--(3.4,1.9);

\end{tikzpicture}}
\end{picture}
 \end{center}
We can thus apply the change of variables formula and compute
\begin{equation}\label{EQ-7-2}
\begin{aligned}
\int_D\varphi\dd\lambda^2 & = \int_{F}\varphi\circ\psi\dd\lambda^2\\
& = \int_{(a,b)}\int_{\mathbb{R}}\varphi(s,t-s)\dd\lambda(t)\dd\lambda(s)\\
& = \int_{(a,b)}\int_{\mathbb{R}}\rho(s)\sigma(t-s)\dd\lambda(t)\dd\lambda(s)\\
& = \int_{(a,b)}(\rho\Conv\sigma)(s)\dd\lambda(s)
\end{aligned}
\end{equation}
where we used Fubini's Theorem in the second equality. Since the open intervals generate the Borel $\sigma$--algebra, we conclude from \eqref{EQ-7-1} and \eqref{EQ-7-2} that
$$
\P_{X+Y}(A) = \int_{A}\rho\Conv\sigma\dd\lambda
$$
holds for every Borel set as desired.
\end{proof}

In order to extend the above to more than two summands, we need to establish first that the convolution of density functions is commutative and associative.

\begin{lem}\label{CONV-ASS-COMM} Let $\rho,\,\sigma,\,\tau\colon\mathbb{R}\rightarrow\mathbb{R}$ be integrable. Then we have $\rho\Conv\sigma=\sigma\Conv\rho$ and $(\rho\Conv\sigma)\Conv\tau=\rho\Conv(\sigma\Conv\tau)$.
\end{lem}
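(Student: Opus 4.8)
The plan is to establish both identities pointwise (where the integrals converge, i.e.\ almost everywhere) by elementary changes of variables together with one appeal to Tonelli's resp.\ Fubini's theorem.

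First, for commutativity, I would fix $s\in\mathbb{R}$ and apply in $(\rho\Conv\sigma)(s)=\int_{\mathbb{R}}\rho(s-t)\sigma(t)\dd\lambda(t)$ the substitution $u=s-t$. The affine map $t\mapsto s-t$ preserves Lebesgue measure (its derivative has absolute value $1$), so the integral becomes $\int_{\mathbb{R}}\rho(u)\sigma(s-u)\dd\lambda(u)=(\sigma\Conv\rho)(s)$, valid at every $s$ at which either side is defined. Hence $\rho\Conv\sigma=\sigma\Conv\rho$.

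For associativity I would first expand both triple integrals. On one hand, $((\rho\Conv\sigma)\Conv\tau)(s)=\int_{\mathbb{R}}(\rho\Conv\sigma)(s-r)\tau(r)\dd\lambda(r)=\int_{\mathbb{R}}\int_{\mathbb{R}}\rho(s-r-t)\sigma(t)\tau(r)\dd\lambda(t)\dd\lambda(r)$. On the other hand, $(\rho\Conv(\sigma\Conv\tau))(s)=\int_{\mathbb{R}}\rho(s-u)(\sigma\Conv\tau)(u)\dd\lambda(u)=\int_{\mathbb{R}}\int_{\mathbb{R}}\rho(s-u)\sigma(u-r)\tau(r)\dd\lambda(r)\dd\lambda(u)$, and the substitution $u=r+t$ (again by translation invariance of $\lambda$) rewrites this as $\int_{\mathbb{R}}\int_{\mathbb{R}}\rho(s-r-t)\sigma(t)\tau(r)\dd\lambda(r)\dd\lambda(t)$. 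So both sides are the two iterated integrals of the single function $F_s(r,t)=\rho(s-r-t)\sigma(t)\tau(r)$ on $\mathbb{R}^2$, and it only remains to interchange the order of integration. For nonnegative $\rho,\sigma,\tau$ this is immediate from Tonelli's theorem: both iterated integrals equal $\int_{\mathbb{R}^2}F_s\dd\lambda^2$ in $[0,\infty]$. For general integrable $\rho,\sigma,\tau$ I would apply the nonnegative case to $|\rho|,|\sigma|,|\tau|$; a direct Tonelli computation shows $|\rho|\Conv|\sigma|\Conv|\tau|$ is integrable with $\L^1$-norm at most $\|\rho\|_1\|\sigma\|_1\|\tau\|_1<\infty$, hence finite for $\lambda$-a.e.\ $s$, and for such $s$ the function $F_s$ is absolutely integrable on $\mathbb{R}^2$, so Fubini's theorem licenses the interchange. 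This yields $(\rho\Conv\sigma)\Conv\tau=\rho\Conv(\sigma\Conv\tau)$ as (a.e.\ defined) functions.

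The main obstacle is purely the measure-theoretic bookkeeping: for merely integrable (possibly signed) $\rho,\sigma,\tau$ the pointwise convolution integrals need not converge for every $s$, so one cannot manipulate $\rho\Conv\sigma$ as an honest everywhere-defined function and must instead route the argument through the nonnegative case to produce an integrable majorant before invoking Fubini. If one restricts attention to nonnegative functions\,---\,which is all that is needed for the application to probability density functions in the next lemma\,---\,the whole argument reduces to the substitution in the commutativity step plus a single use of Tonelli's theorem.
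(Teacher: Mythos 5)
Your proof is correct and follows essentially the same route as the paper: the substitution $u=s-t$ for commutativity, and an interchange of the order of integration plus a translation change of variables for associativity. The only difference is that you are more careful about licensing the Fubini step for signed integrable functions via an integrable majorant, whereas the paper performs the interchange tacitly, leaning on the observation (made just beforehand) that in its applications the factors are nonnegative density functions.
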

\begin{proof} By substituting $u:=s-t$ we see that
$$
(\rho\Conv\sigma)(s)=\int_{\mathbb{R}}\rho(s-t)\sigma(t)\dd\lambda(t)=\int_{\mathbb{R}}\rho(u)\sigma(s-u)\dd\lambda(u)=(\sigma\Conv\rho)(s)
$$
holds for every $s\in\mathbb{R}$.

\smallskip

The proof of Lemma \ref{SUM-GAUSS-LEM-0} shows as a byproduct that the convolution of any two density functions is again a density function and in particular integrable. We thus get
\begin{align*}
[(\rho\Conv\sigma)\Conv\tau](s)&=\int_{\mathbb{R}}(\rho\Conv\sigma)(u)\tau(s-u)\dd\lambda(u)\\
& = \int_{\mathbb{R}}\int_{\mathbb{R}}\rho(t)\sigma(u-t)\dd\lambda(t)\tau(s-u)\dd\lambda(u)\\
& = \int_{\mathbb{R}}\rho(t)\int_{\mathbb{R}}\sigma(u-t)\tau(s-u)\dd\lambda(u)\dd\lambda(t)\\
& = \int_{\mathbb{R}}\rho(t)\int_{\mathbb{R}}\sigma(s-v-t)\tau(v)\dd\lambda(v)\dd\lambda(t)\\
& = \int_{\mathbb{R}}\rho(t)(\sigma\Conv\tau)(s-t)\dd\lambda(t)\\
&=[\rho\Conv(\sigma\Conv\tau)](s)
\end{align*}
where we substituted $v:=s-u$ in the fourth equality.
\end{proof}

In view of Lemma \ref{CONV-ASS-COMM} we can drop parentheses and simply write $\rho_1\Conv\cdots\Conv\rho_n$ for the convolution of $n$ density functions. Employing this, we can now extend Lemma \ref{SUM-GAUSS-LEM-0} to the sum of more than two random variables.

\begin{prop}\label{SUM-GAUSS-LEM} For $i=1,\dots,d$ let $X_i\colon\Omega\rightarrow\mathbb{R}$ be independent Gaussian random variables with zero mean and unit variance. Then $X:=X_1+\cdots+X_d$ is a Gaussian random variable with mean zero and variance $d$.
\end{prop}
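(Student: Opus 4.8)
The plan is to reduce everything to a single Gaussian convolution identity and then induct. First I would record that, combining Lemma \ref{SUM-GAUSS-LEM-0} with the associativity of convolution from Lemma \ref{CONV-ASS-COMM}, a straightforward induction on $d$ shows that $X = X_1 + \cdots + X_d$ possesses a density, namely the $d$-fold convolution $\rho \Conv \cdots \Conv \rho$, where $\rho(x) = \medfrac{1}{\sqrt{2\pi}}\exp(-x^2/2)$ is the standard normal density: at the inductive step one writes $X_1 + \cdots + X_d = (X_1 + \cdots + X_{d-1}) + X_d$, uses that the two summands are independent, applies Lemma \ref{SUM-GAUSS-LEM-0}, and notes (as observed in the proof of Lemma \ref{SUM-GAUSS-LEM-0}) that a convolution of densities is again a density. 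So it suffices to identify this $d$-fold convolution.

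The key step is the claim that, writing $\rho_a(x) := \medfrac{1}{\sqrt{2\pi a}}\exp(-x^2/(2a))$ for the centered Gaussian density of variance $a > 0$, one has $\rho_a \Conv \rho_b = \rho_{a+b}$ for all $a, b > 0$. To prove it I would start from
\[
(\rho_a \Conv \rho_b)(s) = \medfrac{1}{2\pi\sqrt{ab}}\int_{\mathbb{R}}\exp\bigl(-\medfrac{(s-t)^2}{2a} - \medfrac{t^2}{2b}\bigr)\dd\lambda(t)
\]
and complete the square in $t$: the exponent equals $-\medfrac{a+b}{2ab}\bigl(t - \medfrac{bs}{a+b}\bigr)^2 - \medfrac{s^2}{2(a+b)}$, so the integral is $\exp(-s^2/(2(a+b)))$ times a one-dimensional Gaussian integral of variance $ab/(a+b)$, which evaluates to $\sqrt{2\pi ab/(a+b)}$. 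Collecting the constants, $\medfrac{1}{2\pi\sqrt{ab}}\cdot\sqrt{2\pi ab/(a+b)} = \medfrac{1}{\sqrt{2\pi(a+b)}}$, which is exactly the normalizing constant of $\rho_{a+b}$; this proves the claim.

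Granting the claim, a second induction finishes the proof: $\rho = \rho_1$, and applying the claim $d-1$ times gives that the $d$-fold convolution $\rho \Conv \cdots \Conv \rho$ equals $\rho_d$. Hence $X$ has density $\medfrac{1}{\sqrt{2\pi d}}\exp(-x^2/(2d))$, so $X$ is Gaussian with mean zero and variance $d$ in the sense of Definition \ref{DFN-DISTR}(ii), as claimed.

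I expect the only real friction to be the bookkeeping of constants in the completing-the-square step, together with a careful invocation of Fubini/Tonelli when unfolding the iterated convolution; both are routine. A shortcut avoiding densities altogether would be to compute the characteristic function $\E(e^{\mathrm{i}\xi X}) = \prod_{i=1}^{d}\E(e^{\mathrm{i}\xi X_i}) = e^{-d\xi^2/2}$ using independence and invoke uniqueness of characteristic functions, but since the paper has deliberately built up the convolution machinery of Lemmas \ref{SUM-GAUSS-LEM-0} and \ref{CONV-ASS-COMM}, I would stick with the density computation above.
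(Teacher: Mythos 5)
Your proposal is correct and follows essentially the same route as the paper: represent the sum's density as a $d$-fold convolution via Lemma \ref{SUM-GAUSS-LEM-0} and Lemma \ref{CONV-ASS-COMM}, then evaluate the convolution by completing the square. If anything, your version is slightly more careful than the paper's, since you prove the general identity $\rho_a\Conv\rho_b=\rho_{a+b}$ needed to make the iteration from $d=2$ to general $d$ rigorous (convolving a variance-$2$ density with a variance-$1$ density is not the same computation as $\rho_1\Conv\rho_1$), whereas the paper only computes the equal-unit-variance case here and defers the unequal-variance calculation to the proof of Proposition \ref{LK-GAUSS-LEM}.
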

\begin{proof} Since the sum of measurable functions is measurable, $X=X_1+\cdots+X_d$ is a random variable. We compute
$$
\E(X) = \E(X_1)+\cdots+\E(X_d)=0 \; \text{ and } \; \V(X) = \V(X_1)+\cdots+\V(X_d) = d
$$
where the second equality holds as the $X_i$ are independent. We now denote by $\rho = (2\pi)^{-1/2}\exp(-x/2)$ the Gaussian density with mean zero and variance one. Then we need to show that
\begin{equation}\label{EQ-GAUSS}
(\rho\Conv\cdots\Conv\rho)(x) = \medfrac{1}{\sqrt{2\pi d}}\exp\bigl(-\medfrac{x}{2d}\bigr)
\end{equation}
holds where on the left hand side $d$--many $\rho$'s appear. We will now show \eqref{EQ-GAUSS} for $d=2$. The extension to general $d$ is then immediate by iteration. We compute for arbitrary $s\in\mathbb{R}$
\begin{equation}\label{CONV-COMP}
\begin{aligned}
(\rho\Conv\rho)(s) & =\frac{1}{2\pi}\int_{\mathbb{R}}\exp\bigl(-(s-t)^2/2\bigr)\exp\bigl(-t^2/2\bigr)\dd\lambda(t)   \\
& = \smallfrac{1}{2\pi}\int_{\mathbb{R}}\exp\bigl(-(t^2-2\,t\,(s/2)+(s/2)^2)-s^2/4\bigr)\dd\lambda(t)\\
& = \smallfrac{1}{2\pi}\exp(-s^2/4)\int_{\mathbb{R}}\exp\bigl(-(t-s/2)^2)\bigr)\dd\lambda(t)\\
& = \smallfrac{1}{2\pi}\exp(-s^2/4)\int_{\mathbb{R}}\exp(-u^2)\dd\lambda(u)\\
& = \smallfrac{1}{2\pi}\exp(-s^2/4)\sqrt{\pi}\\
& = \smallfrac{1}{\sqrt{4\pi}}\exp(-s^2/4)
\end{aligned}
\end{equation}
where we used the substitution $u:=t-s/2$.
\end{proof}

Let us now show what we later really need, namely the case of a linear combination of Gaussian random variables with zero mean and unit variance.

\begin{prop}\label{LK-GAUSS-LEM} For $i=1,\dots,d$ let $X_i\sim\mathcal{N}(0,1)$ be independent Gaussian random variables and let $\lambda_i\not=0$ be real number. Then $X:=\lambda_1X_1+\cdots+\lambda_dX_d\sim\mathcal{N}(0,\sigma^2)$ with $\sigma^2:=\lambda_1^2+\cdots+\lambda_d^2\not=0$.
\end{prop}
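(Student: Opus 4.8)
The plan is to reduce the general linear-combination statement to the special case of a sum of i.i.d.\ standard Gaussians, which was already handled in Proposition \ref{SUM-GAUSS-LEM}, by rescaling each summand. First I would observe that for a single index $i$, the random variable $\lambda_iX_i$ is Gaussian with mean zero and variance $\lambda_i^2$. This is a one-dimensional change-of-variables computation: if $X_i$ has density $\rho(x)=(2\pi)^{-1/2}\exp(-x^2/2)$, then for any Borel set $A$ and any $\lambda_i\neq0$ we have $\P[\lambda_iX_i\in A]=\P[X_i\in A/\lambda_i]=\int_{A/\lambda_i}\rho\dd\lambda=\int_A|\lambda_i|^{-1}\rho(x/\lambda_i)\dd\lambda(x)$, and $|\lambda_i|^{-1}\rho(x/\lambda_i)=(2\pi\lambda_i^2)^{-1/2}\exp(-x^2/(2\lambda_i^2))$ is exactly the $\mathcal{N}(0,\lambda_i^2)$ density.

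Next I would handle the sum. The clean way is to run the convolution argument of Proposition \ref{SUM-GAUSS-LEM} with non-equal variances: since the $X_i$ are independent, so are the $\lambda_iX_i$, and by Lemma \ref{SUM-GAUSS-LEM-0} together with Lemma \ref{CONV-ASS-COMM} the density of $X=\lambda_1X_1+\cdots+\lambda_dX_d$ is the convolution $\rho_1\Conv\cdots\Conv\rho_d$ where $\rho_i$ is the $\mathcal{N}(0,\lambda_i^2)$ density. So it suffices to prove the convolution identity that the convolution of a $\mathcal{N}(0,\sigma_1^2)$ density with a $\mathcal{N}(0,\sigma_2^2)$ density is the $\mathcal{N}(0,\sigma_1^2+\sigma_2^2)$ density; the general case then follows by induction on $d$ exactly as in Proposition \ref{SUM-GAUSS-LEM}. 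This two-summand identity is the same Gaussian-integral completion-of-the-square computation as in \eqref{CONV-COMP}, only with $\frac12$ replaced by $\frac1{2\sigma_1^2}$ and $\frac1{2\sigma_2^2}$: writing out $\exp(-(s-t)^2/(2\sigma_1^2))\exp(-t^2/(2\sigma_2^2))$, collecting the quadratic in $t$, completing the square, and evaluating the resulting Gaussian integral produces $(2\pi(\sigma_1^2+\sigma_2^2))^{-1/2}\exp(-s^2/(2(\sigma_1^2+\sigma_2^2)))$.

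An even shorter route, if one wants to avoid redoing the convolution bookkeeping, is to factor out a common scale: set $\sigma^2=\lambda_1^2+\cdots+\lambda_d^2$ and $c_i=\lambda_i/\sigma$, so that $c_1^2+\cdots+c_d^2=1$ and $X=\sigma(c_1X_1+\cdots+c_dX_d)$. One then shows $Z:=c_1X_1+\cdots+c_dX_d\sim\mathcal{N}(0,1)$ and concludes by the single-variable scaling computation above that $\sigma Z\sim\mathcal{N}(0,\sigma^2)$. Proving $Z\sim\mathcal{N}(0,1)$ still requires the weighted convolution identity, so this does not really circumvent the core computation; it just isolates it. The mean and variance claims are immediate from linearity of expectation and additivity of variance under independence, as in Proposition \ref{SUM-GAUSS-LEM}.

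The only mildly delicate point—the part I would be most careful about—is the variance arithmetic inside the convolution: one must check that convolving the Gaussian densities with parameters $\sigma_1^2$ and $\sigma_2^2$ really yields parameter $\sigma_1^2+\sigma_2^2$ and not, say, a harmonic-type combination, since the quadratic form in the exponent naively presents $1/\sigma_1^2$ and $1/\sigma_2^2$ and one has to complete the square correctly to see the variances add. Everything else is routine: independence of the $\lambda_iX_i$ from independence of the $X_i$, the inductive passage from two summands to $d$ summands via associativity of convolution (Lemma \ref{CONV-ASS-COMM}), and the single-variable rescaling. I would therefore structure the proof as: (1) single-variable scaling lemma giving $\lambda_iX_i\sim\mathcal{N}(0,\lambda_i^2)$; (2) the two-summand weighted convolution identity; (3) induction to $d$ summands using Lemmas \ref{SUM-GAUSS-LEM-0} and \ref{CONV-ASS-COMM}; (4) the trivial mean/variance computation.
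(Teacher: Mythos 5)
Your proposal is correct and follows essentially the same route as the paper's own proof: establish $\lambda_iX_i\sim\mathcal{N}(0,\lambda_i^2)$ by a one-dimensional change of variables, then show that the convolution of $\mathcal{N}(0,\sigma_1^2)$ and $\mathcal{N}(0,\sigma_2^2)$ densities is the $\mathcal{N}(0,\sigma_1^2+\sigma_2^2)$ density by completing the square, and iterate using Lemmas \ref{SUM-GAUSS-LEM-0} and \ref{CONV-ASS-COMM}. Your use of $|\lambda_i|^{-1}$ in the scaling step is in fact slightly more careful than the paper's $\frac{1}{\lambda}$, which is only literally correct for $\lambda>0$.
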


\begin{proof} We observe that for $\lambda\not=0$, $X\sim\mathcal{N}(0,1)$ and a Borel set $A$ the following holds
$$
\P\bigl[\lambda X\in A\bigr]=\P\bigl[X\in \textfrac{1}{\lambda}A\bigr]=\int_{\frac{1}{\lambda}A}\rho(x)\dd x = \int_{A}\rho(\textfrac{1}{\lambda})\textfrac{1}{\lambda}\dd x
$$
where $\rho$ is the Gaussian density function. This means that the random variable $\lambda X$ is given by the density function $\rho(\frac{\cdot}{\lambda})\frac{1}{\lambda}$. Subbing in the Gaussian density, the latter becomes
$$
\rho(\medfrac{t}{\lambda})\medfrac{1}{\lambda} = \medfrac{1}{\sqrt{2\pi}}\exp\bigl(-(\medfrac{t}{2})^2/2\bigr)\medfrac{1}{\lambda} = \medfrac{1}{\sqrt{2\pi}\lambda}\exp\bigl(-(\medfrac{t^2}{2\lambda^2})\bigr)
$$
for $t\in\mathbb{R}$, which shows that $\lambda X\sim\mathcal{N}(0,\lambda^2)$. We know now that $\lambda_iX_i\sim\mathcal{N}(0,\lambda_i^2)$ holds for $i=1,\dots,d$ and need to establish that their sum's density function is given by
$$
(\rho_1\Conv\cdots\Conv\rho_d)(x) = (2\pi\sigma^2)^{-1/2}\exp(-x/(2\sigma^2))
$$
where 
$$
\rho_i(x)=(2\pi\sigma_i^2)^{-1/2}\exp(-x/(2\sigma_i^2)).
$$
This can be achieved similarly to \eqref{CONV-COMP}. Indeed, let us abbreviate $a:=\sigma_1^2$ and $b:=\sigma_2^2$ and $c:=a+b$, i.e., $(b+a)/c=1$. Then we get
\begin{align*}
(\rho_1\Conv{}\rho_2)(s) & =\medfrac{1}{2\pi\sqrt{ab}}\int_{\mathbb{R}}\exp\bigl(-\medfrac{(s-t)^2}{2a}\bigr)\exp\bigl(-\medfrac{t^2}{2b}\bigr)\dd\lambda(t)\\
& =\medfrac{1}{2\pi\sqrt{ab}}\int_{\mathbb{R}}\exp\bigl(-\medfrac{b(s^2-2st+t^2)+at^2}{2ab}\bigr)\dd\lambda(t)\\
& =\medfrac{1}{2\pi\sqrt{ab}}\int_{\mathbb{R}}\exp\bigl(-\medfrac{t^2(b+a)-2stb+bs^2}{2ab}\bigr)\dd\lambda(t)\\
& =\medfrac{1}{2\pi c^2\sqrt{ab/c}}\int_{\mathbb{R}}\exp\bigl(-\medfrac{t^2(b+a)/c-2stb/c+bs^2/c}{2ab/c}\bigr)\dd\lambda(t)\\
& =\medfrac{1}{2\pi c^2\sqrt{ab/c}}\int_{\mathbb{R}}\exp\bigl(-\medfrac{(t-(bs)/c)^2 - (sb/c)^2 +s^2(b/c)}{2ab/c}\bigr)\dd\lambda(t)\\
& =\medfrac{1}{\sqrt{2\pi c}}\exp(- \medfrac{(sb/c)^2 -s^2(b/c)}{2ab/c})\medfrac{1}{\sqrt{2\pi(ab/c)}}\int_{\mathbb{R}}\exp\bigl(-\medfrac{(t-(bs)/c)^2}{2ab/c}\bigr)\dd\lambda(t)\\
& =\medfrac{1}{\sqrt{2\pi c}}\exp(- \medfrac{(sb/c)^2c^2 -s^2(b/c)c^2}{2abc})\\
& =\medfrac{1}{\sqrt{2\pi c}}\exp(- \medfrac{s^2(b^2 -bc)}{2abc})\\
& =\medfrac{1}{\sqrt{2\pi c}}\exp(- \medfrac{s^2}{2c})
\end{align*}
where the integral can be seen to be one by the substitution $u:=t-(bs)/c$. Iteration of this completes the proof.
\end{proof}

We mention that Proposition \ref{LK-GAUSS-LEM} extends further to the case of a linear combination of arbitrary Gaussian random variables which not necessarily have mean zero or unit variance. The proof then becomes again one step more complicated than the above, as the means $\mu_i$ appear inside the exponential functions. Since for our purposes below the result as stated above will be sufficient, we leave this last generalization to the reader as Problem \ref{SUM-GAUSS-PROB}.

\smallskip

Our final preparation is the following formula for the integral of the Gaussian function against a polynomial which we will need, in order to estimate moments in the next proof.

\begin{lem}\label{LEM-10-1} For $k\in\mathbb{N}$ the equality
$$
\medfrac{1}{\sqrt{2\pi}}\int_{\mathbb{R}}t^{2k}\exp(-t^2/2)\dd t = \medfrac{(2k)!}{2^kk!}
$$
holds.
\end{lem}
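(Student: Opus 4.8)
The plan is to evaluate the integral $I_k := \frac{1}{\sqrt{2\pi}}\int_{\mathbb{R}}t^{2k}\exp(-t^2/2)\dd t$ by establishing a recursion in $k$ via integration by parts and then checking that the claimed closed form $\frac{(2k)!}{2^k k!}$ satisfies the same recursion with the same base case. First I would record the base case $k=0$: this is just $\frac{1}{\sqrt{2\pi}}\int_{\mathbb{R}}\exp(-t^2/2)\dd t = 1$, which matches $\frac{0!}{2^0 0!}=1$. (Strictly, this normalization is exactly the statement that the standard Gaussian density integrates to one, which is implicit in Definition \ref{DFN-DISTR}; alternatively it follows from the computation $\int_{\mathbb{R}}\exp(-u^2)\dd\lambda(u)=\sqrt\pi$ used in the proof of Proposition \ref{SUM-GAUSS-LEM}.)

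Next, for $k\geqslant1$ I would integrate by parts, writing $t^{2k}\exp(-t^2/2) = t^{2k-1}\cdot\bigl(t\exp(-t^2/2)\bigr)$ and using that $t\exp(-t^2/2)$ has antiderivative $-\exp(-t^2/2)$. The boundary terms vanish because the exponential decays faster than any polynomial grows, and one is left with
\begin{align*}
\int_{\mathbb{R}}t^{2k}\exp(-t^2/2)\dd t &= (2k-1)\int_{\mathbb{R}}t^{2k-2}\exp(-t^2/2)\dd t,
\end{align*}
hence $I_k = (2k-1)\,I_{k-1}$. Iterating gives $I_k = (2k-1)(2k-3)\cdots 3\cdot 1 = (2k-1)!!$, the product of odd numbers up to $2k-1$.

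Finally I would verify the algebraic identity $(2k-1)!! = \frac{(2k)!}{2^k k!}$. The cleanest route is to write $(2k)! = \bigl[(2k)(2k-2)\cdots 2\bigr]\cdot\bigl[(2k-1)(2k-3)\cdots 1\bigr]$, observe that the first bracket equals $2^k k!$, and divide; alternatively one can confirm directly that $\frac{(2k)!}{2^k k!}$ satisfies the recursion $a_k = (2k-1)a_{k-1}$ with $a_0=1$, since $\frac{(2k)!}{2^k k!}\big/\frac{(2k-2)!}{2^{k-1}(k-1)!} = \frac{(2k)(2k-1)}{2k} = 2k-1$. Either way the two closed forms agree, completing the proof.

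I do not anticipate a genuine obstacle here; the only mild care points are justifying the vanishing of the boundary terms in the integration by parts (immediate from the super-exponential decay of $\exp(-t^2/2)$) and being tidy with the double-factorial-to-factorial manipulation. The induction step is the heart of the argument, and it is entirely routine.
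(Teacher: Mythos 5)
Your proof is correct, but it takes a genuinely different route from the one in the text. You obtain the product of odd numbers $(2k-1)(2k-3)\cdots 3\cdot 1$ via integration by parts, which yields the recursion $I_k=(2k-1)I_{k-1}$ with base case $I_0=1$; the text instead differentiates the parameter integral $\int_{\mathbb{R}}\exp(-at^2)\,\dd t=\sqrt{\pi/a}$ $k$ times under the integral sign (justified by Lebesgue's theorem) and evaluates at $a=\tfrac12$, where the same odd product emerges from the $k$-th derivative of $a^{-1/2}$. Both arguments then finish with the identical double-factorial manipulation $(2k-1)!!=(2k)!/(2^kk!)$ and both rest on the normalization $\int_{\mathbb{R}}\exp(-u^2)\,\dd u=\sqrt{\pi}$. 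Your approach is the more elementary of the two: it requires only the vanishing of the boundary terms (immediate from the decay of $\exp(-t^2/2)$) and a short induction, whereas the text's approach requires a domination argument to interchange $\frac{\operatorname{d}^k}{\dd a^k}$ with the integral — a point the text invokes but does not spell out. What the text's method buys in exchange is a closed form without induction and a template that computes all even moments simultaneously from a single parameter integral, in the spirit of a moment generating function. Either proof is acceptable here; your only care points, the boundary terms and the double-factorial identity, are both handled correctly.
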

\begin{proof} We first use Lebesgue's Theorem to see that
\begin{align*}
\medfrac{\operatorname{d}^k}{\dd a^k}\Bigl(\int_{\mathbb{R}}\exp(-at^2)\dd t\Bigr)\Big|_{a=\frac{1}{2}} &= \Bigl(\int_{\mathbb{R}}\medfrac{\operatorname{d}^k}{\dd a^k}\exp(-at^2)\dd t\Bigr)\Big|_{a=\frac{1}{2}}\\
& = \int_{\mathbb{R}}(-t^2)^k\exp(-at^2)\dd t\Big|_{a=\frac{1}{2}}\\
& = (-1)^k\int_{\mathbb{R}}t^{2k}\exp(-t^2/2)\dd t
\end{align*}
holds where $a>0$ and $k\in\mathbb{N}$ is fixed. On the other hand substituting $u:=\sqrt{a}t$ leads to
$$
\int_{\mathbb{R}}\exp(-at^2)\dd t = \medfrac{1}{\sqrt{a}}\int_{\mathbb{R}}\exp(-u^2)\dd u = \sqrt{\medfrac{\pi}{a}}
$$
now for every $a>0$. Combining both equations we obtain
$$
\int_{\mathbb{R}}t^{2k}\exp(-t^2/2)\dd t = (-1)^k\medfrac{\operatorname{d}^k}{\dd a^k}\Bigl(\int_{\mathbb{R}}\exp(-at^2)\dd t\Bigr)\Big|_{a=\frac{1}{2}}= (-1)^k\medfrac{\operatorname{d}^k}{\dd a^k}\Bigl(\medfrac{\sqrt{\pi}}{\sqrt{a}}\,\Bigr)\Big|_{a=\frac{1}{2}}
$$
and thus need to compute the derivative on the right hand side. For that purpose we compute
\begin{align*}
\smallfrac{\operatorname{d}^k}{\dd a^k}(a^{-1/2}) & = (-1/2)\cdot(-1/2-1)\cdots(-1/2-(k-1))\,a^{-1/2-k}\\
& = \smallfrac{(-1)^k}{2^k}(1\cdot 3\cdots(2k-3)\cdot (2k-1))\,a^{-1/2-k}\\
& = \smallfrac{(-1)^k}{2^k}\smallfrac{1\cdot2\cdot 3\cdots(2k-3)\cdot (2k-2)\cdot(2k-1)\cdot(2k)}{2\cdot4\cdots(2k-2)\cdot(2k)}\,a^{-1/2-k}\\
& = \smallfrac{(-1)^k}{2^k}\smallfrac{(2k)!}{2^k k!}\,a^{-1/2-k}.
\end{align*}
Putting everything together we arrive at
\begin{align*}
\smallfrac{1}{\sqrt{2\pi}}\int_{\mathbb{R}}t^{2k}\exp(-t^2/2)\dd t & = \smallfrac{1}{\sqrt{2\pi}}(-1)^k\smallfrac{\operatorname{d}^k}{\dd a^k}\Bigl(\smallfrac{\sqrt{\pi}}{\sqrt{a}}\,\Bigr)\Big|_{a=\frac{1}{2}}\\
& = \smallfrac{(-1)^k}{\sqrt{2}}\smallfrac{(-1)^k}{2^k}\smallfrac{(2k)!}{2^k k!}\,a^{-1/2-k}\Big|_{a=\frac{1}{2}}\\
& = \smallfrac{1}{\sqrt{2}}\,\smallfrac{1}{2^k}\,\smallfrac{(2k)!}{2^k k!}2^{1/2+k}
\end{align*}
which after cancelation reduces precisely to the expression on the right hand side in the Lemma.
\end{proof}

Now we can prove the first main result of this chapter. Notice that the inequality is only non-trivial, i.e., the right hand side strictly less than one, if $\epsilon>4\ln2\,(\approx2.8)$ holds. This means, although we are using the variable $\epsilon$ here, the typical intuition that we can let $\epsilon\rightarrow0$ does not apply.

\begin{thm}\label{GA-THM}(Gaussian Annulus Theorem) Let $x\in\mathbb{R}^d$ be drawn at random with respect to the spherical Gaussian distribution with zero mean and unit variance. Then
$$
\P\bigl[\hspace{1pt}\big|\|x\|-\sqrt{d}\hspace{1pt}\big|\geqslant\epsilon\bigr]\leqslant 2\exp(-c\hspace{1pt}\epsilon^2).
$$
holds for every $0\leqslant\epsilon\leqslant\sqrt{d}$ where $c=1/16$.
\end{thm}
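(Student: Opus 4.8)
The plan is to reduce the statement about $\bigl|\,\|x\|-\sqrt d\,\bigr|\geqslant\epsilon$ to a statement about $\bigl|\,\|x\|^2-d\,\bigr|$, and then apply the moment tail bound of Theorem \ref{MTB-THM} to the independent random variables $Y_i:=X_i^2-1$. The first step is the elementary observation that, since $\|x\|\geqslant 0$ and $\sqrt d>0$, we have $\|x\|+\sqrt d\geqslant\sqrt d$, so $\bigl|\,\|x\|-\sqrt d\,\bigr|=\bigl|\,\|x\|^2-d\,\bigr|/(\|x\|+\sqrt d)\leqslant\bigl|\,\|x\|^2-d\,\bigr|/\sqrt d$ is too weak; instead I would argue directly that $\bigl|\,\|x\|-\sqrt d\,\bigr|\geqslant\epsilon$ forces $\bigl|\,\|x\|^2-d\,\bigr|\geqslant\epsilon\sqrt d$. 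Indeed if $\|x\|\geqslant\sqrt d+\epsilon$ then $\|x\|^2\geqslant d+2\epsilon\sqrt d+\epsilon^2\geqslant d+\epsilon\sqrt d$, and if $\|x\|\leqslant\sqrt d-\epsilon$ (which needs $\epsilon\leqslant\sqrt d$, exactly our hypothesis) then $\|x\|^2\leqslant d-2\epsilon\sqrt d+\epsilon^2\leqslant d-\epsilon\sqrt d$ since $\epsilon^2\leqslant\epsilon\sqrt d$. Hence
$$
\P\bigl[\,\bigl|\|x\|-\sqrt d\bigr|\geqslant\epsilon\,\bigr]\leqslant\P\bigl[\,\bigl|\|x\|^2-d\bigr|\geqslant\epsilon\sqrt d\,\bigr]=\P\bigl[\,\bigl|Y_1+\cdots+Y_d\bigr|\geqslant\epsilon\sqrt d\,\bigr]
$$
where $Y_i=X_i^2-1$ and, by Proposition \ref{PROP-9-1}, the $X_i\sim\mathcal N(0,1)$ are independent, hence so are the $Y_i$.

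Next I would check that the $Y_i$ satisfy the hypotheses of Theorem \ref{MTB-THM}: each has mean zero, since $\E(X_i^2)=\V(X_i)=1$, and I need the moment bound $|\E(Y_i^k)|\leqslant k!/2$ for $k\geqslant 2$. Expanding $Y_i^k=(X_i^2-1)^k$ by the binomial theorem and using Lemma \ref{LEM-10-1} to evaluate $\E(X_i^{2j})=(2j)!/(2^jj!)$, one gets an explicit expression for $\E(Y_i^k)$; the task is to bound it by $k!/2$. This is the one genuinely computational step, and I expect it to be the main obstacle — not because it is deep, but because the bound in Theorem \ref{MTB-THM} has the constant $k!/2$ baked in and one must verify the even moments of $X_i^2-1$ really obey it. A clean route is to bound $|\E(Y_i^k)|\leqslant\E(|X_i^2-1|^k)$ and then compare $|X_i^2-1|$ with an exponential-type variable; alternatively, estimate $\E(X_i^{2j})=(2j)!/(2^jj!)\leqslant j!\,2^j$ crudely and sum the binomial expansion, checking the resulting bound is $\leqslant k!/2$ for all $k\geqslant 2$ (the $k=2$ case, $\E(Y_i^2)=\E(X_i^4)-1=3-1=2\leqslant 2!/2\cdot$? — note $2!/2=1<2$, so one must be slightly more careful and perhaps rescale, e.g.\ work with $Y_i=(X_i^2-1)/2$ and track the factor of $2$ through, or absorb it into the final constant).

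Once the moment hypothesis is in place, Theorem \ref{MTB-THM} with $a=\epsilon\sqrt d$ gives
$$
\P\bigl[\,\bigl|Y_1+\cdots+Y_d\bigr|\geqslant\epsilon\sqrt d\,\bigr]\leqslant 2\exp\Bigl(-\tfrac14\min\bigl(\tfrac{\epsilon^2 d}{d},\epsilon\sqrt d\bigr)\Bigr)=2\exp\Bigl(-\tfrac14\min(\epsilon^2,\epsilon\sqrt d)\Bigr),
$$
and since $\epsilon\leqslant\sqrt d$ forces $\epsilon^2\leqslant\epsilon\sqrt d$, the minimum equals $\epsilon^2$, yielding $2\exp(-\epsilon^2/4)$. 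Finally I would track how the rescaling needed in the moment estimate (dividing $Y_i$ by a constant $\kappa$, hence replacing $a$ by $a/\kappa$) degrades the exponent from $1/4$ to $1/16$; with $\kappa=2$ one gets $a/\kappa=\epsilon\sqrt d/2$ and exponent $\tfrac14\min(\epsilon^2/4,\epsilon\sqrt d/2)=\epsilon^2/16$, which is exactly the claimed constant $c=1/16$. Assembling the chain of inequalities completes the proof.
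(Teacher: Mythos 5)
Your plan is essentially the paper's proof: reduce $\bigl|\|x\|-\sqrt{d}\bigr|\geqslant\epsilon$ to $\bigl|\|x\|^2-d\bigr|\geqslant\epsilon\sqrt{d}$, set $Y_i=(X_i^2-1)/2$, verify the hypotheses of Theorem \ref{MTB-THM}, and apply it with $a=\epsilon\sqrt{d}/2$; your bookkeeping of the constants (the minimum collapsing to $\epsilon^2/4$ because $\epsilon\leqslant\sqrt{d}$, and the final $c=1/16$) matches the paper exactly. Two remarks. First, the factorization you dismiss as ``too weak'' is in fact exactly what is needed and is what the paper uses: from $\|x\|+\sqrt{d}\geqslant\sqrt{d}$ one gets $\epsilon\leqslant\bigl|\|x\|-\sqrt{d}\bigr|\leqslant\bigl|\|x\|^2-d\bigr|/\sqrt{d}$ directly, with no case analysis and indeed without needing $\epsilon\leqslant\sqrt{d}$ at this stage (that hypothesis is only used later to resolve the minimum). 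Second, and more importantly, the one step you leave open --- the moment bound $|\E(Y_i^k)|\leqslant k!/2$ --- is the real content, and the crude route you lean toward (bounding $\E(X_i^{2j})=(2j)!/(2^jj!)\leqslant 2^jj!$ and summing the binomial expansion) does \emph{not} close it: the alternating signs are lost and the resulting bound is of order $2^kk!$, far above $k!/2$. The paper instead uses the pointwise inequality $|X_i^2-1|^k\leqslant X_i^{2k}+1$ (checked separately on $|X_i|\leqslant1$ and $|X_i|>1$), so that
$$
|\E(Y_i^k)|\leqslant\medfrac{1}{2^k}\Bigl(\medfrac{(2k)!}{2^kk!}+1\Bigr)=\medfrac{(2k-1)(2k-3)\cdots3\cdot1}{(2k)(2k-2)\cdots4\cdot2}\,k!+\medfrac{1}{2^k}\leqslant\medfrac{3}{8}k!+\medfrac{k!}{8}=\medfrac{k!}{2}
$$
for $k\geqslant2$, using Lemma \ref{LEM-10-1} for $\E(X_i^{2k})$. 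With that step supplied, your argument is complete and coincides with the paper's.
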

\begin{proof} Let a random vector $X\sim\mathcal{N}(0,1)$ be given and let $X_1,\dots,X_d$ denote its coordinate functions. By Proposition \ref{PROP-9-1} we have $X_i\sim\mathcal{N}(0,1)$ for every $i=1,\dots,d$. We define $Y_i:=(X_i^2-1)/2$ and see that
\begin{equation}\label{EQ-10-1}
\begin{aligned}
\P\bigl[\hspace{1pt}\big|\|X\|-\sqrt{d}\hspace{1pt}\big|\geqslant\epsilon\bigr] &\leqslant \P\bigl[\hspace{1pt}\big|\|X\|-\sqrt{d}\hspace{1pt}\big|\cdot\bigl(\|X\|+\sqrt{d}\hspace{1pt}\bigr)>\epsilon\cdot\sqrt{d}\hspace{1.5pt}\bigr]\\
&= \P\bigl[\hspace{1pt}\bigl|X_1^2+\cdots+X_d^2-d\hspace{1pt}\bigr|>\epsilon\sqrt{d}\hspace{1.5pt}\bigr]\\
&= \P\bigl[\hspace{1pt}\bigl|(X_1^2-1)+\cdots+(X_d^2-1)\hspace{1pt}\bigr|>\epsilon\sqrt{d}\hspace{1.5pt}\bigr]\\
&= \P\bigl[\hspace{1pt}\bigl|{\textstyle\medfrac{X_1^2-1}{2}}+\cdots+{\textstyle\frac{X_d^2-1}{2}}\hspace{1pt}\bigr|>{\textstyle\frac{\epsilon\sqrt{d}}{2}}\hspace{1.5pt}\bigr]\\
&= \P\bigl[\hspace{1pt}\bigl|Y_1+\cdots+Y_d\hspace{1pt}\bigr|>{\textstyle\frac{\epsilon\sqrt{d}}{2}}\hspace{1.5pt}\bigr]
\end{aligned}
\end{equation}
where we used $\|X\|+\sqrt{d}\geqslant\sqrt{d}$ for the inequality. Since we have $\E(X_i)=0$ and $\V(X_i)=1$ we get
$$
\E(Y_i)=\E\bigl(\medfrac{X_i^2-1}{2}\bigr) = {\textstyle\frac{1}{2}}\bigl(\E(X_i^2)-1\bigr) = 0
$$
for every $i=1,\dots,d$. We now estimate for $k\geqslant2$ the $k$-th moment of $Y_i$. For this we firstly note that
$$
|X_i^2(\omega)-1|^k\leqslant X_i^{2k}(\omega)+1
$$
holds for every $\omega\in\Omega$. Indeed, if $|X_i(\omega)|\leqslant1$, then $0\leqslant X_i^2(\omega)\leqslant1$ and thus $|X_i^2(\omega)-1|^k=(1-X_i^2(\omega))^k\leqslant1$. If otherwise $|X_i(\omega)|>1$, then $X_i^2(\omega)-1>0$ and therefore $|X_i^2(\omega)-1|^k=(X_i^2(\omega)-1)^k\leqslant X_i^{2k}(\omega)$. We employ the above and estimate
\begin{align*}
|\E(Y_i^k)| & = \bigl|\E\bigl(\bigl(\medfrac{X_i^2-1}{2}\bigr)^k\bigr)\bigr| = \smallfrac{1}{2^k}\bigl|\E\bigl((X_i^2-1)^k\bigr)\bigr|\\
&\leqslant \smallfrac{1}{2^k}\int_{\Omega}|X_i^2-1|^k\dd\P \leqslant \smallfrac{1}{2^k}\int_{\Omega}X_i^{2k}+1\dd\P = \smallfrac{1}{2^k}\bigl(\E(X_i^{2k})+1\bigr)\\
& = \smallfrac{1}{2^k}\Bigl(\medfrac{1}{\sqrt{2\pi}}\int_{\mathbb{R}}t^{2k}\exp(-t^2/2)\dd t +1\Bigr) = \smallfrac{1}{2^k}\Bigl(\smallfrac{(2k)!}{2^kk!}+1\Bigr)\\
& = \smallfrac{(2k)\cdot(2k-1)\cdot(2k-2)\cdot(2k-3)\cdots5\cdot4\cdot3\cdot2\cdot1}{(2k)^2\cdot(2k-2)^2\cdots(2\cdot3)^2\cdot(2\cdot2)^2\cdot(2\cdot1)^2}k!+\smallfrac{1}{2^k}\\
& = \smallfrac{(2k-1)\cdot(2k-3)\cdots5\cdot3\cdot1}{(2k)\cdot(2k-2)\cdots6\cdot4\cdot2}k!+\smallfrac{1}{2^k}\\
& \leqslant 1\cdot 1\cdots 1\cdot \smallfrac{3}{4}\cdot\smallfrac{1}{2}\cdot k! +\smallfrac{k!}{4\cdot 2}\\
&= \bigl(\smallfrac{3}{4}+\smallfrac{1}{4}\bigr)\smallfrac{k!}{2}
\\
&=\smallfrac{k!}{2}
\end{align*}
where we used  $X_i\sim\mathcal{N}(0,1)$ to compute $\E(X_i^{2k})$ and Lemma \ref{LEM-10-1} to evaluate the integral. We thus established that the $Y_i$ satisfy the assumptions of Theorem \ref{MTB-THM} and we can thus make use of Bernstein's inequality
$$
\P\bigl[\hspace{1pt}|Y_1+\cdots+Y_d|\geqslant a\hspace{0.5pt}\bigr]\leqslant 2\exp\bigl(-C\min\bigl(\smallfrac{a^2}{d},a\bigr)\bigr)
$$
with $a:={\textstyle\frac{\epsilon\sqrt{d}}{2}}>0$ to continue our estimate from \eqref{EQ-10-1}. Since $\sqrt{d}\geqslant\epsilon$ holds, we get
\begin{align*}
\P\bigl[\hspace{1pt}\big|\|x\|-\sqrt{d}\hspace{1pt}\big|\geqslant\epsilon\hspace{0.5pt}\bigr]& \leqslant \P\bigl[\hspace{1pt}\bigl|Y_1+\cdots+Y_d\hspace{1pt}\bigr|\geqslant\medfrac{\epsilon\sqrt{d}}{2}\hspace{1pt}\bigr] \\
&\leqslant 2\exp\bigl(-C\min\bigl(\medfrac{(\epsilon\sqrt{d}/2)^2}{d},\medfrac{\epsilon\sqrt{d}}{2}\bigr)\bigr)\\
&\leqslant 2\exp\bigl(-C\min\bigl(\medfrac{\epsilon^2}{4},\medfrac{\epsilon^2}{2}\bigr)\bigr)\\
&= 2\exp\bigl(-C\medfrac{\epsilon^2}{4}\bigr).
\end{align*}
Finally, we recall that the constant in Theorem \ref{MTB-THM} was $C=1/4$ and thus the exponent equals $-c\hspace{1pt}\epsilon^2$ with $c=1/16$.
\end{proof}

In Chapter \ref{Ch-INTRO} we propagated the intuition that a point chosen at random will have norm close to $\sqrt{d}$ with `a high probability.' More precisely, Figure \ref{FIG-1} suggested that the deviation from $\sqrt{d}$ to be expected will not increase with $d$. Theorem \ref{GA-THM} allows to quantify this intuition as follows. If we pick, e.g., $\epsilon=10$ then a random point's norm will satisfy $\sqrt{d}-10\leqslant \|x\|\leqslant \sqrt{d}+10$ with probability greater or equal to $0.99$ whenever we are in a space with dimension $d\geqslant100$.

\smallskip

We note the following byproduct the proof of Theorem \ref{GA-THM} which gives a similar estimate but of the square of the norm. Notice that one of our first results in Chapter \ref{Ch-INTRO} was the equality $\E(\|X\|^2)=d$ for any fixed $d$\hspace{1pt}---\hspace{1pt}and that we on the other hand proved a statement without the squares, namely $\E(\|X\|)\approx\sqrt{d}$, asymptotically.

\begin{prop}\label{GA-COR} Let $x\in\mathbb{R}^d$ be drawn at random with respect to the spherical Gaussian distribution with zero mean and unit variance. Then
$$
\P\bigl[\hspace{1pt}\big|\|x\|^2-d\hspace{1pt}\big|\geqslant\epsilon\bigr]\leqslant 2\exp\bigl(-c\hspace{1pt}\min\bigl(\medfrac{\epsilon^2}{2d},\epsilon\bigr)\bigr).
$$
holds for every $\epsilon>0$ where $c=1/8$.
\end{prop}
\begin{proof}We define the random variables $Y_i$ as in the proof of Theorem \ref{GA-THM} and then proceed using the Bernstein inequality as in the last part of the previous proof. This way we obtain
$$
\P\bigl[\hspace{1pt}\bigl|\|X\|^2-d\bigr|\geqslant\epsilon\hspace{1pt}\bigr] = \P\bigl[\hspace{1pt}\bigl|Y_1+\cdots+Y_d\bigr|\geqslant\medfrac{\epsilon}{2}\hspace{1pt}\bigr]\leqslant 2\exp\bigl(-C\min\bigl(\smallfrac{\epsilon^2}{4d},\medfrac{\epsilon}{2}\bigr)\bigr)
$$
which leads to the claimed inequality taking $C=1/4$ into account.
\end{proof}

\smallskip

In the next two theorems we divide by $\|x\|$ and $\|y\|$ where $x$ and $y$ are picked at random. Notice however that $x=0$ or $y=0$ occurs only with probability zero. To be completely precise, we could use the conditional probability $\P[|\langle{}x/\|x\|,y/\|y\|\rangle{}|\leqslant\epsilon\,|\,x\not=0\text{ and }y\not=0]$. We will instead tacitly assume this. Observe that the bound below is non-trivial if $\epsilon>2/(\sqrt{d}-7)$. In this result it thus possible to think of $\epsilon\rightarrow0$, \emph{but only if simultaneously $d\rightarrow\infty$ suitably fast}.

\smallskip

\begin{thm}\label{GO-THM} Let $x,y\in\mathbb{R}^d$ be drawn at random with respect to the spherical Gaussian distribution with zero mean and unit variance. Then for every $\epsilon>0$ and for all $d\geqslant1$ the estimate
$$
\P\bigl[\hspace{1pt}\big|\bigl\langle{}\medfrac{x}{\|x\|},\medfrac{y}{\|y\|}\bigr\rangle{}\big|\geqslant\epsilon\hspace{1pt}\bigr]\leqslant \medfrac{2/\epsilon+7}{\sqrt{d}}
$$
holds.
\end{thm}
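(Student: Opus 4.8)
The plan is to use the rotational invariance of the spherical Gaussian to reduce the two-point estimate to a one-point one, and then to split according to whether $\|x\|$ has the typical order $\sqrt{d}$. First I would condition on $y$ (which is nonzero almost surely) and, exactly as in the proof of Theorem \ref{UO-THM}, choose for each realisation with $y\neq0$ an orthogonal linear map $T$ with $Ty=\|y\|e^{\scriptscriptstyle(1)}$. Orthogonal maps preserve the Gaussian density (it is a function of the norm alone and such maps have Jacobian determinant $\pm1$), so $Tx\sim\mathcal{N}(0,1,\mathbb{R}^d)$, and since $T$ preserves norms and the scalar product we get $\langle x/\|x\|,\,y/\|y\|\rangle=(Tx)_1/\|Tx\|$. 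By the law of total probability it therefore suffices to prove
$$
\P\bigl[\,|z_1|\geqslant\epsilon\|z\|\,\bigr]\leqslant\medfrac{2/\epsilon+7}{\sqrt{d}}
$$
for a single random vector $z=(z_1,\dots,z_d)\sim\mathcal{N}(0,1,\mathbb{R}^d)$.

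To estimate the last probability I would intersect with the event $\{\|z\|\geqslant\sqrt{d}/2\}$ and its complement. On $\{\|z\|\geqslant\sqrt{d}/2\}$ the inequality $|z_1|\geqslant\epsilon\|z\|$ forces $|z_1|\geqslant\epsilon\sqrt{d}/2$; since $z_1\sim\mathcal{N}(0,1)$ by Proposition \ref{PROP-9-1}, the Markov bound together with $\E|z_1|\leqslant(\E z_1^2)^{1/2}=1$ yields
$$
\P\bigl[\,|z_1|\geqslant\epsilon\|z\|,\ \|z\|\geqslant\sqrt{d}/2\,\bigr]\leqslant\P\bigl[\,|z_1|\geqslant\epsilon\sqrt{d}/2\,\bigr]\leqslant\medfrac{2\,\E|z_1|}{\epsilon\sqrt{d}}\leqslant\medfrac{2}{\epsilon\sqrt{d}}.
$$
On the complement, $\|z\|<\sqrt{d}/2$ implies $\bigl|\|z\|-\sqrt{d}\bigr|\geqslant\sqrt{d}/2$, so applying the Gaussian Annulus Theorem (Theorem \ref{GA-THM}) with its parameter set to $\sqrt{d}/2$ (admissible since $\sqrt{d}/2\leqslant\sqrt{d}$) gives
$$
\P\bigl[\,\|z\|<\sqrt{d}/2\,\bigr]\leqslant2\exp\bigl(-\medfrac{1}{16}\cdot\medfrac{d}{4}\bigr)=2\exp(-d/64).
$$

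It then remains to verify the elementary inequality $2\exp(-d/64)\leqslant 7/\sqrt{d}$ for all $d\geqslant1$, i.e.\ $g(d):=2\sqrt{d}\,\exp(-d/64)\leqslant7$. A short computation shows $g$ increases on $(0,32]$ and decreases afterwards, with maximum $g(32)=8\sqrt{2}\,e^{-1/2}\approx6.86<7$ --- which is precisely where the constant $7$ originates. Adding the two bounds gives $\P[|z_1|\geqslant\epsilon\|z\|]\leqslant 2/(\epsilon\sqrt{d})+7/\sqrt{d}$, which is the assertion.

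The only genuinely delicate point is the first step: making the rotational-invariance reduction rigorous, i.e.\ the measurable choice of $T$ as a function of $y$ and the identity $Tx\sim\mathcal{N}(0,1,\mathbb{R}^d)$ for each fixed $y$. This, however, is handled verbatim as in the proof of Theorem \ref{UO-THM} (with the uniform ball distribution there replaced by the Gaussian, and using that both are invariant under orthogonal maps). Everything after the reduction is a union bound, one Markov estimate and the single-variable inequality above, so no further obstacles are expected.
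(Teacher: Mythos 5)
Your proof is correct and follows essentially the same route as the paper: reduce the scalar product against the second vector to a one-dimensional standard Gaussian, split on the event $\|x\|\geqslant\sqrt{d}/2$, control its complement with the Gaussian Annulus Theorem, and absorb $2\exp(-d/64)$ into $7/\sqrt{d}$ by maximising $2\sqrt{d}\,\e^{-d/64}$ at $d=32$. The only (harmless) differences are that the paper produces the one-dimensional Gaussian via Proposition \ref{LK-GAUSS-LEM} rather than an explicit rotation --- which also sidesteps the measurable-selection point you flag as delicate --- and that it phrases the final combination through the law of total probability instead of your cleaner union bound.
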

\begin{proof}Let $y\in\mathbb{R}^d\backslash\{0\}$ be fixed and let $X\colon\Omega\rightarrow\mathbb{R}^d$ be a random vector with $X\sim\mathcal{N}(0,1)$ whose coordinate functions we denote by $X_i$ for $i=1,\dots d$. We define
$$
U_y\colon\Omega\rightarrow\mathbb{R}^d,\;U_y(\omega)=\bigl\langle{}X(\omega),\medfrac{y}{\|y\|}\bigr\rangle{}=\sum_{i=1}^d\medfrac{y_i}{\|y\|}X_i(\omega)
$$
which is itself a random variable which has by Proposition \ref{LK-GAUSS-LEM} Gaussian distribution with $\mu=0$ and
$$
\sigma^2=\sum_{i=1}^d\medfrac{y_i^2}{\|y\|^2}= \medfrac{1}{\|y\|^2}\sum_{i=1}^d y_i^2 = 1.
$$
This shows that $U_y\sim\mathcal{N}(0,1)$ is normally distributed. In particular, we have
$$
\P\bigl[\hspace{0.5pt}|U_y|\leqslant \delta\hspace{0.5pt}\bigr] = \medfrac{1}{\sqrt{2\pi}}\int_{-\delta}^{\delta}\exp(-t^2/2)\dd t
$$
for any $\delta>0$. Observe that the right hand side is independent of $y$. Therefore, if $Y\colon\Omega\rightarrow\mathbb{R}^d$ is any other random vector, we can consider the composite random vector $U_Y(X)\colon\Omega\rightarrow\mathbb{R}^d$ and get for any $\delta>0$
\begin{align*}
\P\bigl[\hspace{1pt}|U_Y(X)|\leqslant \delta\hspace{1pt}\bigr] & = \P\bigl(\{\omega\in\Omega\:;\:(X(\omega),Y(\omega))\in A\}\bigr)= \int_A\rho(x)\rho(y)d(x,y)\\
&  =  \int_{\mathbb{R}}\rho(x)\int_{A_y}\rho(x)\dd x \dd y = \int_{\mathbb{R}}\rho(x)\P(\{\omega\in\Omega\:;\:X(\omega)\in A_y\}) \dd y  \\
& = \int_{\mathbb{R}}\rho(t)\underbrace{\P(\{\omega\in\Omega\:;\:|U_y(X(\omega))|\leqslant\delta\})}_{\text{independent of $y$}} \dd y = 1\cdot\P\bigl[|U_y(X)|\leqslant\delta\bigr]  \\
&= \medfrac{1}{\sqrt{2\pi}}\int_{-\delta}^{\delta}\exp(-t^2/2)\dd t  \geqslant 1-\medfrac{2}{\sqrt{2\pi}}\int_{\delta}^{\infty}1/t^2\dd t = 1-\medfrac{2}{\sqrt{2\pi}\hspace{1pt}\delta}
\end{align*}
where $\rho$ is the Gaussian density function and $A=\{(x,y)\in\mathbb{R}^2\:;\:|U_y(x)|\leqslant\delta\}$. 
Let now $X$ and $Y\colon\Omega\rightarrow\mathbb{R}^d$ both be normally distributed random vectors and let $\epsilon>0$ be given. We first use the Theorem of Total Probability and drop the second summand. Then we rewrite the first term by using the random variable $U_Y(X)$ from above and the second term such that the Gaussian Annulus Theorem becomes applicable. We get
\begin{align*}
\P\bigl[\hspace{0.5pt}\big|\bigl\langle{}\textfrac{X}{\|X\|},\textfrac{Y}{\|Y\|}\bigr\rangle{}\big|\leqslant\epsilon\hspace{0.5pt}\bigr] & \geqslant \P\bigl[\hspace{0.5pt}\big|\bigl\langle{}\textfrac{X}{\|X\|},\textfrac{Y}{\|Y\|}\bigr\rangle{}\big|\leqslant\epsilon\;\big|\;\|X\|\geqslant\textfrac{\sqrt{d}}{2}\hspace{1pt}\bigr]\cdot\P\bigl[\|X\|\geqslant\textfrac{\sqrt{d}}{2}\hspace{1pt}\bigr] \\
&= \P\bigl[\big|\bigl\langle{}X,\textfrac{Y}{\|Y\|}\rangle{}|\leqslant\epsilon\,\|X\|\;\big|\;\|X\|\geqslant\textfrac{\sqrt{d}}{2}\hspace{1pt}\bigr]\cdot\P\bigl[\|X\|\geqslant\textfrac{\sqrt{d}}{2}\hspace{1pt}\bigr] \\
&\geqslant \P\bigl[|U_Y(X)|\leqslant\textfrac{\epsilon\sqrt{d}}{2}\hspace{1pt}\bigr]\cdot\P\bigl[|\|X\|-\sqrt{d}|\geqslant\textfrac{\sqrt{d}}{2}\hspace{1pt}\bigr] \\
&\geqslant   \Bigl(1-\medfrac{2}{\sqrt{2\pi}\hspace{1pt}(\epsilon\sqrt{d}/2)}\Bigr)\Bigl(1-2\exp\bigl(-c(\sqrt{d}/2)^2\bigr)\Bigr) \\
&=  \Bigl(1-\medfrac{4}{\sqrt{2\pi d}\hspace{1.5pt}\epsilon}\Bigr)\Bigl(1-2\exp\bigl(-\medfrac{cd}{4}\bigr)\Bigr) \\
&\geqslant  1-  \medfrac{4}{\sqrt{2\pi}\hspace{1.5pt}\epsilon}\medfrac{1}{\sqrt{d}}   -2\exp\bigl(-\medfrac{d}{64}  \bigr)
\end{align*}
since $c=1/16$. We see now already that the second summand is the one that goes slower to zero than the third. To get rid of the exponential term we thus only need to find the right constant. We claim that
$$
2\exp\bigl(-\medfrac{x}{64}\bigr)\leqslant \medfrac{7}{\sqrt{x}}
$$
holds for every $d\geqslant1$. Indeed, the above is equivalent to $f(d):=\frac{49}{4}\exp\bigl(\frac{d}{32}\bigr)-d\geqslant0$. Considering $f\colon(0,\infty)\rightarrow\mathbb{R}$ we get $f'(d)=0$ if and only if $d=32\ln(\frac{128}{49})$ and at this point needs to be the global minimum of $f$. Using a calculator one can see that $f(32\ln(\frac{128}{49}))>0$. Consequently, we get
$$
\P\bigl[\hspace{1pt}\big|\bigl\langle{}\textfrac{X}{\|X\|},\textfrac{Y}{\|Y\|}\bigr\rangle{}\big|\geqslant\epsilon\hspace{1pt}\bigr] \leqslant \medfrac{4}{\sqrt{2\pi}\hspace{1.5pt}\epsilon}\medfrac{1}{\sqrt{d}}   +\medfrac{7}{\sqrt{d}} \leqslant \bigl(\medfrac{2}{\epsilon}+7\bigr) \medfrac{1}{\sqrt{d}}
$$
which concludes the proof.
\end{proof}

Theorem \ref{GO-THM} now also quantifies our intuition that two points chosen at random will have scalar product close to zero and thus will be `almost orthogonal' with `a high probability.' If we pick for instance $\epsilon = 0.1$ and $d\geqslant 100\,000$, then by Theorem \ref{GO-THM} we get that two points drawn at random will, with probability greater or equal to $0.9$, have (normalized) scalar product less or equal to $0.1$. This corresponds to an angle of $90^{\circ}\pm 6^{\circ}$.

\smallskip

The proof of the previous result shows also the following.

\begin{cor}\label{COR-ANGLE-ESTIM} Let $X\sim\mathcal{N}(0,1,\mathbb{R}^d)$ and $0\not=\xi\in\mathbb{R}^d$ be fixed. Then we have
$$
\P\big[\hspace{1pt}|\langle{}X,\xi\rangle{}|\geqslant\epsilon\hspace{1pt}\bigr]\leqslant \medfrac{4}{\sqrt{2\pi}}\medfrac{\|\xi\|}{\epsilon\sqrt{d}}
$$
for every $\epsilon>0$.
\end{cor}
\begin{proof} In the notation of the proof of Theorem \ref{GO-THM} we calculate
\begin{align*}
\P\big[\hspace{1pt}\bigl|\langle{}X,\xi\rangle{}\bigr|\leqslant\epsilon\hspace{1pt}\bigr] & = \P\big[\hspace{1pt}\bigl|\langle{}X,\medfrac{\xi}{\|\xi\|}\rangle{}\bigr|\leqslant\medfrac{\epsilon}{\|\xi\|}\bigr] = \P\big[\hspace{1pt}\bigl|U_{\xi}(X)\bigr|\leqslant\medfrac{\epsilon}{\|\xi\|}\bigr] \geqslant 1-\medfrac{4}{\sqrt{2\pi}}\medfrac{\|\xi\|}{\epsilon\sqrt{d}}
\end{align*}
which establishes the result.
\end{proof}

\smallskip

Since the probability that two random points are \emph{exactly} orthogonal is obviously zero, we have on the other hand to expect that the probability of being \emph{very} close to zero is again small. The estimate below is non-trivial if $\epsilon<1$ and $d>16\ln(\frac{2}{1-\epsilon})$. In particular we can now let $\epsilon$ tend to zero \emph{for a fixed $d$} if we wish so. However, the threshold on the left hand side tends to zero for $d\rightarrow\infty$ even if we fix $0<\epsilon<1$ since we divide by $\sqrt{d}$ inside the square bracket.

\begin{thm}\label{GO-THM-2}Let $x$, $y\in\mathbb{R}^d$ be drawn at random with respect to the spherical Gaussian distribution with zero mean and unit variance. Then for $\epsilon>0$ and $d\geqslant1$ we have
$$
\P\bigl[\bigl|\bigl\langle{}\medfrac{x}{\|x\|},\medfrac{y}{\|y\|}\bigr\rangle\bigl|\leqslant\medfrac{\epsilon}{2\sqrt{d}}\,\bigr]\leqslant \epsilon+2\exp(-cd)
$$
where $c=1/16$.
\end{thm}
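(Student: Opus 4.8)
The plan is to recycle the auxiliary random variable $U_Y(X)$ built in the proof of Theorem~\ref{GO-THM}. Recall that for fixed $y\neq0$ the variable $U_y(\omega)=\langle{}X(\omega),y/\|y\|\rangle{}$ is $\mathcal{N}(0,1)$-distributed by Proposition~\ref{LK-GAUSS-LEM}, and that integrating over a second random vector $Y$ yields, for the composite $U_Y(X)$, the identity
$$
\P\bigl[|U_Y(X)|\leqslant\delta\bigr]=\smallfrac{1}{\sqrt{2\pi}}\int_{-\delta}^{\delta}\exp(-t^2/2)\dd t
$$
for every $\delta>0$. Since $\langle{}X/\|X\|,Y/\|Y\|\rangle{}=U_Y(X)/\|X\|$ on the full-measure set where $X\neq0$, the probability to be bounded is $\P\bigl[|U_Y(X)|\leqslant\smallfrac{\epsilon}{2\sqrt{d}}\|X\|\bigr]$.

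First I would decompose along the event $E=\{\|X\|\leqslant2\sqrt{d}\}$: writing $A$ for the event $\{|U_Y(X)|\leqslant\smallfrac{\epsilon}{2\sqrt{d}}\|X\|\}$, one has $\P[A]\leqslant\P[A\cap E]+\P[E^c]$. On $E$ the bound $\smallfrac{\epsilon}{2\sqrt{d}}\|X\|\leqslant\epsilon$ holds, so $A\cap E\subseteq\{|U_Y(X)|\leqslant\epsilon\}$, and the displayed identity with $\delta=\epsilon$ gives $\P[A\cap E]\leqslant\smallfrac{1}{\sqrt{2\pi}}\int_{-\epsilon}^{\epsilon}\exp(-t^2/2)\dd t\leqslant\smallfrac{2\epsilon}{\sqrt{2\pi}}\leqslant\epsilon$, using $2/\sqrt{2\pi}<1$. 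For $\P[E^c]=\P[\|X\|>2\sqrt{d}]$ I would note that $\|X\|>2\sqrt{d}$ forces $\bigl|\|X\|-\sqrt{d}\bigr|\geqslant\sqrt{d}$ and apply the Gaussian Annulus Theorem (Theorem~\ref{GA-THM}) with its parameter set equal to $\sqrt{d}$ (admissible since the constraint there is $0\leqslant\cdot\leqslant\sqrt{d}$), obtaining $\P[E^c]\leqslant2\exp(-c\hspace{1pt}d)$ with $c=1/16$. Adding the two contributions gives $\P[A]\leqslant\epsilon+2\exp(-c\hspace{1pt}d)$, as desired.

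There is no genuinely hard step here; the proof is a short assembly of tools already in place. The two points that need care are: transporting the composite-variable identity $\P[|U_Y(X)|\leqslant\delta]=\smallfrac{1}{\sqrt{2\pi}}\int_{-\delta}^{\delta}\exp(-t^2/2)\dd t$ from the proof of Theorem~\ref{GO-THM} (it is perhaps cleanest to isolate it there as its own displayed line, or to re-derive it via Cavalieri's principle exactly as was done in that proof), and choosing the norm threshold so that it simultaneously forces $\smallfrac{\epsilon}{2\sqrt{d}}\|X\|\leqslant\epsilon$ on the good event and reproduces exactly the constant $c=1/16$ on the bad event; the threshold $\|X\|\leqslant2\sqrt{d}$, i.e.\ a permitted deviation of $\sqrt{d}$, does both.
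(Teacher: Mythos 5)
Your proposal is correct and follows essentially the same route as the paper: the paper likewise bounds the event $\{|U_Y(X)|/\|X\|\leqslant\epsilon/(2\sqrt{d})\}$ by its inclusion in the union of $\{\|X\|>2\sqrt{d}\}$ and $\{|U_Y(X)|\leqslant\epsilon\}$, controls the first piece via the Gaussian Annulus Theorem with deviation $\sqrt{d}$ and the second via the $\mathcal{N}(0,1)$-law of $U_Y(X)$ established in the proof of Theorem~\ref{GO-THM}. The only cosmetic difference is that the paper phrases the argument through the complementary event rather than your direct union bound.
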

\begin{proof} Let $X,Y\colon\Omega\rightarrow\mathbb{R}^d$ be random vectors with normal distribution. Let $U_Y(X)$ be defined as in the proof of Theorem \ref{GO-THM}. We compute
\begin{align*}
\P\bigl[\hspace{1pt}\bigl|\bigl\langle{}\medfrac{X}{\|X\|},\medfrac{Y}{\|Y\|}\bigr\rangle\bigl|\hspace{2pt}\hspace{2pt}\geqslant\medfrac{\epsilon}{2\sqrt{d}}\,\bigr] & = 1-\P\bigl[\hspace{1pt}\bigl|\bigl\langle{}{\textstyle\frac{X}{\|X\|},\frac{Y}{\|Y\|}}\bigr\rangle\bigl|\hspace{2pt}<\hspace{1pt}\medfrac{\epsilon}{2\sqrt{d}}\hspace{1.5pt}\bigr]\\
& = 1-\P\bigl[\medfrac{1}{\|X\|}\cdot\bigl|U_Y(X)\bigl|<\medfrac{1}{2\sqrt{d}}\cdot\epsilon\hspace{1pt}\bigr]\\
& \geqslant 1-\P\bigl[\medfrac{1}{\|X\|}<\medfrac{1}{2\sqrt{d}} \:\text{ or }\: \bigl|U_Y(X)\bigl|<\epsilon\hspace{1pt}\bigr]\\
& \geqslant 1-\Bigl(\P\bigl[\hspace{1pt}\|X\|>2\sqrt{d}\hspace{1.5pt}\bigr] +\P\bigl[\hspace{1pt}\bigl|U_Y(X)\bigl|<\epsilon\hspace{1.5pt}\bigr]\Bigr)\\
& = \P\bigl[\hspace{1pt}\|X\|\leqslant 2\sqrt{d}\hspace{2pt}\bigr] -\P\bigl[\hspace{1pt}\bigl|U_Y(X)\bigl|<\epsilon\hspace{1pt}\bigr]\\
& = \P\bigl[\hspace{1pt}\|X\|-\sqrt{d} \leqslant \sqrt{d}\hspace{2pt}\bigr] -\medfrac{1}{\sqrt{2\pi}}\int_{-\epsilon}^{\epsilon}\exp(-t^2/2)\dd t\\
& \geqslant \P\bigl[\bigl|\|X\|-\sqrt{d}\hspace{1.5pt}\bigr| \leqslant \sqrt{d}\hspace{2pt}\bigr] -\medfrac{1}{\sqrt{2\pi}}2\epsilon\\
& \geqslant 1-2\exp\Bigl(-\medfrac{1}{16}(\sqrt{d})^2\Bigr) -\sqrt{2/\pi}\,\epsilon\\
& \geqslant 1-2\exp\bigl(\medfrac{d}{16}\bigr) -\epsilon
\end{align*}
where we used Theorem \ref{GA-THM} and the formula for $\P\bigl[\hspace{1pt}|U_Y(X)|\leqslant \delta\hspace{1pt}\bigr]$ that we established in the proof of Theorem \ref{GO-THM}.
\end{proof}

If we now fix the dimension such that the exponential term is already quite small (e.g., $d=100$, then $2\exp(-cd)<0.01$) and pick, e.g., $\epsilon=0.1$, then the result shows that the (normalized) scalar product of two random points is less than $0.005$ only with a probability less than $0.11$.

\smallskip

Combining the last two results we can think of the scalar products for a fixed high dimension $d$, with high probability, to be small numbers which are however bounded away from zero.

\smallskip

Our last result in this section quantifies our finding from Chapter \ref{Ch-INTRO} that for two points $x$ and $y$ drawn at random from a unit Gaussian we have $\|x-y\|\approx\sqrt{2d}$. Using $\epsilon=18$ in (ii) below shows that the values possible for the dimension are $d>324$. In this case the estimate is however still trivial. Interesting cases appear only if $\epsilon>18$ and $d$ so large that the sum on the right hand side of (ii) stays below one.

\begin{thm}\label{THM-distance-of-two-points-sqrt-2d} Let $x$, $y\in\mathbb{R}^d$ be drawn at random with respect to a spherical Gaussian distribution with zero mean and unit variance. Then we have the following.\vspace{3pt}
\begin{compactitem}
\item[(i)] $\forall\:\epsilon>18\;\exists\:d_0\in\mathbb{N}\;\forall\:d\geqslant d_0\colon \P\bigl[\hspace{1pt}\bigl|\|x-y\|-\sqrt{2d}\hspace{1.5pt}\bigr|\geqslant\epsilon\hspace{1pt}\bigr]\leqslant\medfrac{18}{\epsilon}.$\vspace{3pt}

\item[(ii)] $\forall\:\epsilon>18,\,d\geqslant\epsilon^2\colon\P\bigl[\hspace{1pt}\bigl|\|x-y\|-\sqrt{2d}\hspace{1.5pt}\bigr|\geqslant\epsilon\hspace{1pt}\bigr]\leqslant\medfrac{18}{\epsilon} +\medfrac{8}{\sqrt{d}}.$
\end{compactitem}
\end{thm}
\begin{proof} We use the same trick as in the beginning of the proof of the Gaussian Annulus Theorem (Theorem \ref{GA-THM}) and then use the cosine law to get
\begin{align*}
\P\bigl[\bigl|\|X-Y\|-\sqrt{2d}\bigr|\geqslant\epsilon\hspace{1pt}\bigr]&\leqslant\P\bigl[\hspace{1pt}\bigl|\|X-Y\|-\sqrt{2d}\hspace{1pt}\bigr|(\|X-Y\|+\sqrt{2d}\hspace{1pt})\geqslant\epsilon\sqrt{2d}\hspace{2pt}\bigr]\\
&=\P\bigl[\hspace{1pt}\bigl|\|X-Y\|^2-2d\bigr|\geqslant\epsilon\sqrt{2d}\hspace{2pt}\bigr]\\
&=1-\P\bigl[\hspace{1pt}\bigl|\|X-Y\|^2-2d\bigr|\leqslant\epsilon\sqrt{2d}\hspace{2pt}\bigr]\\
&=1-\P\bigl[-\epsilon\sqrt{2d}\leqslant\|X\|^2+\|Y\|^2-2\langle{}X,Y\rangle{}-2d\leqslant\epsilon\sqrt{2d}\hspace{2pt}\bigr]\\
& \leqslant1-\P\bigl[\hspace{1pt}\bigr|\|X\|^2-d\hspace{1pt}\bigr|\leqslant\medfrac{\epsilon\sqrt{2d}}{3}\,\bigr]^2\cdot\P\bigl[\bigr|\langle{}X,Y\rangle{}\bigr|\leqslant\medfrac{\epsilon\sqrt{2d}}{6}\,\bigr]=:(\circ)
\end{align*}
Now we employ Proposition \ref{GA-COR} to obtain
\begin{align*}
\P\bigl[\bigr|\|X\|^2-d\hspace{1pt}\bigr|\geqslant\medfrac{\epsilon\sqrt{2d}}{3}\,\bigr]&\leqslant2\exp\bigl(-\medfrac{1}{8}\hspace{1pt}\min\bigl(\medfrac{(\epsilon\sqrt{2d}/3)^2}{2d},\medfrac{\epsilon\sqrt{2d}}{3}\bigr)\bigr)\\
& \leqslant 2\exp\bigl(-\medfrac{1}{8}\hspace{1pt}\min\bigl(\medfrac{\epsilon^2}{9},\medfrac{\epsilon^2\sqrt{2}}{3}\bigr)\bigr)\\
& = 2\exp\bigl(-\medfrac{\epsilon^2}{72}\bigr)
\end{align*}
where we used $\sqrt{d}\geqslant\epsilon$ for the second estimate. This leads to
\begin{equation}\label{EST-1}
P\bigl[\hspace{1pt}\bigr|\|X\|^2-d\hspace{1pt}\bigr|\leqslant\medfrac{\epsilon\sqrt{2d}}{3}\,\bigr]\geqslant1-2\exp\bigl(-\medfrac{\epsilon^2}{72}\bigr)
\end{equation}
which will allow us below to estimate the squared term in $(\circ)$. In order to estimate the non-squared term we use Theorem \ref{GO-THM} from which we get with $\delta:=\epsilon/(6\sqrt{2d})>0$ the estimate
\begin{align*}
\medfrac{2/\delta+7}{\sqrt{d}}&\geqslant\P\bigl[\hspace{1pt}\big|\bigl\langle{}X,Y\bigr\rangle{}\big|\geqslant\delta\|X\|\|Y\|\hspace{1pt}\bigr]\\
&\geqslant \P\bigl[\hspace{1pt}\big|\bigl\langle{}X,Y\bigr\rangle{}\big|\geqslant\delta\|X\|\|Y\|\,\big|\,\|X\|,\|Y\|\leqslant\sqrt{2d}\hspace{2pt}\bigr]\cdot \P\bigl[\hspace{1pt}\|X\|,\|Y\|\leqslant\sqrt{2d}\hspace{2pt}\bigr]\\
& \geqslant \P\bigl[\hspace{1pt}\big|\bigl\langle{}X,Y\bigr\rangle{}\big|\geqslant\delta\sqrt{2d}\sqrt{2d}\hspace{2pt}\bigr]\cdot \P\bigl[\hspace{1pt}\|X\|\leqslant\sqrt{2d}\hspace{2pt}\bigr]^2\\
& =\P\bigl[\hspace{1pt}\big|\bigl\langle{}X,Y\bigr\rangle{}\big|\geqslant\delta\hspace{1pt}6\sqrt{2d}\cdot\medfrac{\sqrt{2d}}{6}\hspace{1pt}\bigr]\cdot \P\bigl[\hspace{1pt}\big|\|X\|^2-d\big|\leqslant d\hspace{1pt}\bigr]^2\\
& \geqslant\P\bigl[\hspace{1pt}\big|\bigl\langle{}X,Y\bigr\rangle{}\big|\geqslant\epsilon\medfrac{\sqrt{2d}}{6}\hspace{1pt}\bigr]\cdot \bigl(1-2\exp\bigl(-\medfrac{1}{8}\min(d/2,d)\bigr)\bigr)^2\\
&=\P\bigl[\hspace{1pt}\big|\bigl\langle{}X,Y\bigr\rangle{}\big|\geqslant\medfrac{\epsilon\sqrt{2d}}{6}\hspace{1pt}\bigr]\cdot \bigl(1-2\exp\bigl(-\medfrac{d}{16}\bigr)\bigr)^{2}
\end{align*}
where we used Proposition \ref{GA-COR} with $\epsilon=d$ to treat the second factor in the last estimate. Bringing the squared factor in the last expression on the other side and recalling $\delta:=\epsilon/(6\sqrt{2d})$ leads to
\begin{equation}\label{EST-2}
\begin{array}{rl}
\P\bigl[\bigr|\langle{}X,Y\rangle{}\bigr|\leqslant\medfrac{\epsilon\sqrt{2d}}{6}\,\bigr]&=1-\P\bigl[\bigr|\langle{}X,Y\rangle{}\bigr|\geqslant\medfrac{\epsilon\sqrt{2d}}{6}\,\bigr]\\
&\geqslant 1-\medfrac{2/\delta+7}{\sqrt{d}}\cdot\bigl(1-2\exp\bigl(-\medfrac{d}{16}\bigr)\bigr)^{-2}\\
&\geqslant 1- \bigl(\medfrac{12\sqrt{2}}{\epsilon}+\medfrac{7}{\sqrt{d}} \bigr)\cdot\bigl(1-2\exp\bigl(-\medfrac{d}{16}\bigr)\bigr)^{-2}\\
\end{array}
\end{equation}
Now we combine \eqref{EST-1} and \eqref{EST-2} to continue estimating
\begin{align*}
(\circ) & = 1-\P\bigl[\bigr|\|X\|^2-d\bigr|\leqslant\medfrac{\epsilon\sqrt{2d}}{3}\,\bigr]^2\cdot\P\bigl[\bigr|\langle{}X,Y\rangle{}\bigr|\leqslant\medfrac{\epsilon\sqrt{2d}}{6}\,\bigr]\\
&\leqslant 1-\bigl(1-2\exp\bigl(-\medfrac{\epsilon^2}{72}\bigr)\bigr)^2\cdot\Bigr[1- \bigl(\medfrac{12\sqrt{2}}{\epsilon}+\medfrac{7}{\sqrt{d}} \Bigr)\cdot\bigl(1-2\exp\bigl(-\medfrac{d}{16}\bigr)\bigr)^{-2}\hspace{1pt}\Bigr]\\
& \leqslant 1-\bigl(1-4\exp\bigl(-\medfrac{\epsilon^2}{72}\bigr)\bigr)\cdot\Bigr[1- \frac{\textstyle\frac{12\sqrt{2}}{\epsilon}+\frac{7}{\sqrt{d}}}{\textstyle(1-2\exp(-\frac{d}{16}))^2}\Bigr]\\
& \leqslant 4\exp\bigl(-\medfrac{\epsilon^2}{72}\bigr) + \frac{\textstyle\frac{12\sqrt{2}}{\epsilon}+\frac{7}{\sqrt{d}}}{\textstyle(1-2\exp(-\frac{d}{16}))^2}=:f(\epsilon,d)
\end{align*}
where we see that $\lim_{d\rightarrow\infty}f(\epsilon,d)=4\exp(-\epsilon^2/72)+12\sqrt{2}/\epsilon$ holds. Using a calculator one verifies that $4\exp(-\epsilon^2/72)<1/\epsilon$ holds for $\epsilon>17.5$. From this we get that for suitable large $d$ and $\epsilon>17.5$ the estimate
$$
f(\epsilon,d)\leqslant\medfrac{1}{\epsilon}+\medfrac{12\sqrt{2}}{\epsilon}<\medfrac{18}{\epsilon}
$$
holds which implies (i). For (ii) we continue estimating $f(\epsilon,d)$. For $\epsilon>17.5$, we pick $d\geqslant\epsilon^2>306.25$, so $d\geqslant307$. For this choice of $d$ we check with a calculator that $(1-2\exp(-d/16))^{-2}\leqslant1.0002$ which leads to an estimate for the fraction. With this we get
\begin{align*}
f(\epsilon,d) & = 4\exp\bigl(-\medfrac{\epsilon^2}{72}\bigr) + \frac{\textstyle\frac{12\sqrt{2}}{\epsilon}+\frac{7}{\sqrt{d}}}{\textstyle1-4\exp(-\frac{d}{16})}\leqslant \medfrac{1}{\epsilon}+1.0002\bigl(\medfrac{12\sqrt{2}}{\epsilon} +\medfrac{7}{\sqrt{d}}\bigr)\leqslant\medfrac{18}{\epsilon}+\medfrac{8}{\sqrt{d}}
\end{align*}
as desired.
\end{proof}

Since we do need this later, let us formulate the following probability estimate that we established in the proof above.

\begin{cor}\label{COR-distance-of-two-points-sqrt-2d} Let $x$, $y\in\mathbb{R}^d$ be drawn at random with respect to the spherical Gaussian distribution with zero mean and unit variance. Then
$$
\forall\:\epsilon>18,\,d\geqslant\epsilon^2\colon \P\bigl[\hspace{2pt}\bigl|\|x-y\|^2-2d\hspace{1pt}\bigr|\geqslant\epsilon\sqrt{2d}\hspace{2pt}\bigr]\leqslant \medfrac{18}{\epsilon}+\medfrac{8}{\sqrt{d}}
$$
holds.\hfill\qed
\end{cor}

\section*{Problems}

\begin{probl}\label{SUM-GAUSS-PROB} Generalize Proposition \ref{LK-GAUSS-LEM} as follows. For $i=1,\dots,d$ let $X_i\sim\mathcal{N}(\mu_i,\sigma_i)$ be independent Gaussian random variables. Let $\lambda_i\not=0$ be real numbers. Show that $X:=\lambda_1X_1+\cdots+\lambda_dX_d$ is again a Gaussian random variable with mean $\mu=(\mu_1+\cdots+\mu_d)/d$ and $\sigma^2=\lambda_1^2\sigma_1^2+\cdots+\lambda_d^2\sigma_d^2$.
\end{probl}


\chapter{Random projections}\label{Ch-RP}

Let $d, k\geqslant1$ with $k\leqslant{}d$. In the sequel we identify $\mathbb{R}^{k\times d}\cong\L(\mathbb{R}^d,\mathbb{R}^k)$ and refer to its elements sometimes as matrices and sometimes as linear maps.

\smallskip

Let us firstly consider a matrix $A\in\mathbb{R}^{k\times d}$ whose $k$ rows are orthogonal. Then the map $\mathbb{R}^k\rightarrow\mathbb{R}^d$, $x\mapsto A^Tx$ is injective and we can regard $\mathbb{R}^k$ via $A^T$ as a subspace of $\mathbb{R}^d$. The linear map $P\colon\mathbb{R}^d\rightarrow\mathbb{R}^d$, $Px:=A^TAx$ is then idempotent, its range is isomorphic to $\mathbb{R}^k\subseteq\mathbb{R}^d$ in the aforementioned sense, and thus $P$ is an \emph{orthogonal projection onto $\mathbb{R}^k$}. Using the basis extension theorem and the Gram-Schmidt procedure it is easy to see that for every linear subspace $V\subseteq \mathbb{R}^d$ with $\dim V=k$ there exists an orthogonal projection and that each such projection can be realized by a matrix $A\in\mathbb{R}^{k\times d}$ with $k$ orthogonal rows.

\smallskip

In this chapter our aim is to understand the properties of maps given by matrices that we pick at random. For this let $(\Omega,\Sigma,\P)$ be a probability space.

\begin{dfn}\label{DFN-RLM} A measurable function $U\colon\Omega\rightarrow\mathbb{R}^{k\times d}$ is called a \textit{random matrix}.
\end{dfn}

In order to specify the underlying distribution, we extend Proposition \ref{PROP-9-1} to matrices. We first note that if a matrix
$$
U=\begin{bmatrix}
u_{11} & u_{12} & \cdots & u_{1d} \\
u_{21} & u_{22} & \cdots & u_{2d} \\
\vdots &  &  & \vdots \\
u_{k1} & u_{k2} & \cdots & u_{kd} \\
\end{bmatrix}
$$
is given, the map $U\colon\mathbb{R}^d\rightarrow\mathbb{R}^k$ is given by $Ux = \bigl(\langle{}u_1,x\rangle{},\langle{}u_2,x\rangle{},\dots,\langle{}u_k,x\rangle{}\bigr)$ for $x\in\mathbb{R}^d$, if we denote by $u_i=(u_{i1},u_{i2},\dots,u_{id})$ the $i$-th row of $U$ for $i=1,\dots,k$. If $U\colon \Omega\rightarrow\mathbb{R}^{k\times d}$ is a random matrix, then we denote by $u_{ij}\colon\Omega\rightarrow\mathbb{R}$ the random variables that arise as the coordinate functions of $U$ and by $u_{i}\colon\Omega\rightarrow\mathbb{R}^d$ the random vectors that arise, if we combine the coordinate functions of the $i$-th row back into a vector.
Employing this notation, we get the following result.

\begin{prop}\label{PROP-RANDOM-MATRIX} Let $U\colon\Omega\rightarrow\mathbb{R}^{k\times d}$ be a random matrix with entries $u_{ij}\colon\Omega\rightarrow\mathbb{R}$ and row vectors $u_{i}\colon\Omega\rightarrow\mathbb{R}^d$. Then the following are equivalent.
\begin{compactitem}

\item[(i)] $U\sim\mathcal{N}(0,1,\mathbb{R}^{k\times d})$.

\vspace{2pt}

\item[(ii)] $\forall\:i=1,\dots,k\colon u_i\sim\mathcal{N}(0,1,\mathbb{R}^d)$.
 
\vspace{2pt}

\item[(iii)] $\forall\:i=1,\dots,d,\,j=1,\dots,k\colon u_{ij}\sim\mathcal{N}(0,1)$.

\end{compactitem}
\end{prop}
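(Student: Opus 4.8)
The plan is to prove the equivalence by showing (i)$\Leftrightarrow$(iii) and (ii)$\Leftrightarrow$(iii), both times reducing everything to Proposition \ref{PROP-9-1}. The key observation is that the entries of a $d\times k$ matrix are just $d\cdot k$ real numbers, so a random matrix $U\colon\Omega\to\mathbb{R}^{d\times k}$ is, up to the linear identification $\mathbb{R}^{d\times k}\cong\mathbb{R}^{dk}$, the same thing as a random vector with $dk$ coordinates. Under this identification the distribution $\mathcal{N}(0,1,\mathbb{R}^{d\times k})$ is by definition (Definition \ref{DFN-DISTR}(ii) applied in dimension $dk$) the spherical Gaussian on $\mathbb{R}^{dk}$, because $\|U\|^2=\sum_{i,j}u_{ij}^2$ equals the squared Euclidean norm of the corresponding vector in $\mathbb{R}^{dk}$ — the Frobenius norm and the Euclidean norm agree under vectorisation. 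Hence Proposition \ref{PROP-9-1}, read in dimension $dk$ with the coordinate functions $u_{ij}$, gives immediately: $U\sim\mathcal{N}(0,1,\mathbb{R}^{d\times k})$ if and only if the $u_{ij}$ are independent and each $u_{ij}\sim\mathcal{N}(0,1,\mathbb{R}^1)$. This is precisely the content of (i)$\Leftrightarrow$(iii), once one notes that (iii) as stated should be read together with independence of the entries (indeed Proposition \ref{PROP-9-1} packages the scalar-Gaussian condition and independence together).

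For (ii)$\Leftrightarrow$(iii) the plan is to apply Proposition \ref{PROP-9-1} a second time, now one row at a time. Fix $i\in\{1,\dots,k\}$. The row vector $u_i=(u_{i1},\dots,u_{id})\colon\Omega\to\mathbb{R}^d$ has coordinate functions $u_{i1},\dots,u_{id}$, so by Proposition \ref{PROP-9-1} in dimension $d$ we have $u_i\sim\mathcal{N}(0,1,\mathbb{R}^d)$ if and only if $u_{i1},\dots,u_{id}$ are independent and each $u_{ij}\sim\mathcal{N}(0,1,\mathbb{R}^1)$. So (ii) says exactly: for each $i$, the entries within row $i$ are independent standard Gaussians. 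Condition (iii) (with independence of all $dk$ entries) clearly implies this. For the converse direction one needs that independence within each row plus (iii) implies joint independence of all entries; the cleanest route is to go through (i): use the already-established (i)$\Leftrightarrow$(iii), together with the fact that a random matrix whose rows are independent spherical Gaussians is itself a spherical Gaussian on $\mathbb{R}^{d\times k}$. Concretely, if each $u_i\sim\mathcal{N}(0,1,\mathbb{R}^d)$ \emph{and the rows $u_1,\dots,u_k$ are mutually independent}, then for a product Borel set one factors $\P[U\in A_1\times\cdots\times A_k]=\prod_i\P[u_i\in A_i]=\prod_i (2\pi)^{-d/2}\int_{A_i}\exp(-\|x\|^2/2)\,\dd\lambda^d(x)$, and recombining the exponentials and using $\|U\|^2=\sum_i\|u_i\|^2$ yields the $\mathcal{N}(0,1,\mathbb{R}^{d\times k})$ density; since product sets generate the Borel $\sigma$-algebra of $\mathbb{R}^{d\times k}$ this gives (i). Running this chain, (ii)$+$(row independence)$\Rightarrow$(i)$\Rightarrow$(iii), and symmetrically (i)$\Rightarrow$(ii).

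The subtlety to be careful about — and what I expect to be the only real point worth spelling out — is the bookkeeping of \emph{which} independence statements are being asserted in (ii) and (iii). Proposition \ref{PROP-9-1} bundles ``spherically Gaussian'' with ``independent coordinates,'' so (ii) should be understood as ``the rows are independent and each row is a spherical Gaussian'' and (iii) as ``all $dk$ entries are independent standard Gaussians.'' With that reading, the whole proposition is just Proposition \ref{PROP-9-1} applied in dimensions $dk$ and $d$, plus the elementary remarks that (a) vectorisation $\mathbb{R}^{d\times k}\cong\mathbb{R}^{dk}$ is measure-preserving and identifies Frobenius with Euclidean norm, and (b) full independence of a family of random variables is equivalent to independence of the blocks (rows) together with independence within each block — a standard fact about product measures which one can also see directly by factoring the joint density as above. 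No genuinely new estimate or idea is needed; the work is entirely in phrasing the three conditions so that the two invocations of Proposition \ref{PROP-9-1} line up.
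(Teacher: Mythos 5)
Your proof takes essentially the same route as the paper: the paper's entire argument is to identify $\mathbb{R}^{d\times k}\cong\mathbb{R}^{dk}$ and declare that the equivalences then ``follow immediately from Proposition \ref{PROP-9-1}.'' You fill in the details the paper omits, and your observation that (ii) and (iii) must be read as including the relevant independence statements (since marginal distributions alone do not determine the joint law) is a genuine and worthwhile point of care that the paper glosses over.
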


\begin{proof} Since $\mathbb{R}^{k\times d}\cong\mathbb{R}^{kd}$ holds, we see that a random matrix $U\in\mathbb{R}^{k\times d}$ can be identified with a real random vector $U\colon\Omega\rightarrow\mathbb{R}^{dk}$. In that sense we understand (i). Having done this rearrangement the equivalences follow immediately from Proposition \ref{PROP-9-1}.
\end{proof}

By Proposition \ref{PROP-RANDOM-MATRIX} we can write $U\sim\mathcal{N}(0,1,\mathbb{R}^{k\times d})$ and understand this matrix-wise, row vector-wise or element-wise. If we draw $U\in\mathbb{R}^{k\times d}$ at random with $U\sim\mathcal{N}(0,1,\mathbb{R}^{k\times d})$, then by Theorem \ref{GO-THM} we have
$$
\P\bigl[\hspace{1pt}\bigl|\bigl\langle{}\medfrac{u_i}{\|u_i\|},\medfrac{u_\ell}{\|u_\ell\|}\bigr\rangle{}\bigr|\leqslant\epsilon\hspace{1pt}\bigr]\geqslant \medfrac{2/\epsilon+7}{\sqrt{d}}
$$
for $i\not=\ell$, which means that the rows are almost orthogonal with high probability for large dimensions $d$.
We will therefore call a matrix $U\sim\mathcal{N}(0,1,\mathbb{R}^{k\times d})$ a \emph{random projection from $\mathbb{R}^d$ to $\mathbb{R}^k$}. Observe however that $U$ needs in general neither to be orthogonal, nor a projection.

\smallskip

We will study now, how a random projection in the aforementioned sense does distorts length. The bound in the result below is non-trivial for $k\epsilon^2>16\ln2\,(\approx11.1)$.

\begin{thm}\label{RP-THM}(Random Projection Theorem) Let $U\in\mathbb{R}^{k\times d}$ be a random matrix with $U\sim\mathcal{N}(0,1)$. Then for every $x\in\mathbb{R}^d\backslash\{0\}$ and every $0<\epsilon<1$ we have
$$
\P\bigl[\hspace{1pt}\big|\|Ux\|-\sqrt{k}\hspace{1pt}\|x\|\big|\geqslant\epsilon\sqrt{k}\hspace{1pt}\|x\|\hspace{1pt}\bigr]\leqslant 2\exp(-ck\epsilon^2)
$$
where $c=1/16$.
\end{thm}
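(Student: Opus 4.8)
The plan is to fix $x$, diagonalise the situation so that $\|Ux\|$ becomes $\|x\|$ times the norm of a standard Gaussian vector in $\mathbb{R}^k$, and then quote the Gaussian Annulus Theorem in dimension $k$. So first I would fix $x\in\mathbb{R}^d\backslash\{0\}$ and look at the coordinates of $Ux$. Writing $u_1,\dots,u_k\in\mathbb{R}^d$ for the rows of $U$, the $i$-th coordinate of $Ux$ is $\langle u_i,x\rangle=\sum_{j=1}^d u_{ij}x_j$. Since $U\sim\mathcal{N}(0,1,\mathbb{R}^{d\times k})$, Proposition \ref{PROP-RANDOM-MATRIX} gives that the entries $u_{ij}$ are independent $\mathcal{N}(0,1)$ random variables; hence the rows $u_1,\dots,u_k$ are independent (they involve disjoint families of the $u_{ij}$), and Proposition \ref{LK-GAUSS-LEM} shows $\langle u_i,x\rangle\sim\mathcal{N}(0,\|x\|^2)$ for each $i$. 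Therefore $\langle u_1,x\rangle,\dots,\langle u_k,x\rangle$ are independent $\mathcal{N}(0,\|x\|^2)$ variables.

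Next I would set $Z_i:=\langle u_i,x\rangle/\|x\|$, so that each $Z_i\sim\mathcal{N}(0,1)$ and the $Z_i$ are independent; by Proposition \ref{PROP-9-1} this means $Z:=(Z_1,\dots,Z_k)$ satisfies $Z\sim\mathcal{N}(0,1,\mathbb{R}^k)$. Then
$$
\|Ux\|^2=\sum_{i=1}^k\langle u_i,x\rangle^2=\|x\|^2\sum_{i=1}^k Z_i^2=\|x\|^2\|Z\|^2,
$$
and since $\|x\|>0$ we get $\|Ux\|=\|x\|\cdot\|Z\|$, hence $\|Ux\|-\sqrt{k}\|x\|=\|x\|\bigl(\|Z\|-\sqrt{k}\bigr)$. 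Dividing the event by $\|x\|>0$ yields
$$
\P\bigl[\,\big|\|Ux\|-\sqrt{k}\|x\|\big|\geqslant\epsilon\sqrt{k}\|x\|\,\bigr]=\P\bigl[\,\big|\|Z\|-\sqrt{k}\big|\geqslant\epsilon\sqrt{k}\,\bigr].
$$

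Finally I would apply the Gaussian Annulus Theorem (Theorem \ref{GA-THM}) in dimension $k$ to the vector $Z$, with the threshold $\epsilon\sqrt{k}$ playing the role of its parameter. Its hypothesis ``threshold $\leqslant\sqrt{k}$'' reads here $\epsilon\sqrt{k}\leqslant\sqrt{k}$, which is exactly the assumption $0<\epsilon<1$. This bounds the right-hand side above by $2\exp\bigl(-c(\epsilon\sqrt{k})^2\bigr)=2\exp(-ck\epsilon^2)$ with $c=1/16$, which is the claim. The argument is essentially bookkeeping; the only points that need a little care are verifying that the coordinates of $Ux$ are genuinely independent Gaussians (so that $Z\sim\mathcal{N}(0,1,\mathbb{R}^k)$ and Theorem \ref{GA-THM} applies), which is where Proposition \ref{PROP-RANDOM-MATRIX} together with Proposition \ref{LK-GAUSS-LEM} enters, and checking that the constraint $\epsilon\sqrt{k}\leqslant\sqrt{k}$ demanded by Theorem \ref{GA-THM} is met by $0<\epsilon<1$.
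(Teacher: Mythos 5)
Your proof is correct and follows essentially the same route as the paper: reduce to the observation that $U(x/\|x\|)$ (equivalently your $Z$) is a standard Gaussian vector in $\mathbb{R}^k$ and then invoke the Gaussian Annulus Theorem in dimension $k$ with threshold $\epsilon\sqrt{k}$. If anything you are slightly more careful than the paper, since you explicitly justify the independence of the coordinates of $Z$ via the independence of the rows of $U$ and check that the hypothesis $\epsilon\sqrt{k}\leqslant\sqrt{k}$ of Theorem \ref{GA-THM} is met.
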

\begin{proof}Let $x\in\mathbb{R}^d\backslash\{0\}$ be fixed and let $U\colon\Omega\rightarrow\mathbb{R}^{k\times d}$ be a random matrix with $U\sim\mathcal{N}(0,1)$. We consider
$$
U(\cdot)\medfrac{x}{\|x\|}\colon\Omega\rightarrow\mathbb{R}^k
$$
and observe that each of its coordinate functions
$$
\pr_i(U(\cdot)\medfrac{x}{\|x\|}) = \bigl\langle{}u_i,\medfrac{x}{\|x\|}\bigr\rangle{} = \Bigsum{j=1}{d}u_{ij}\medfrac{x_j}{\|x\|}
$$
is a linear combination of random variables $u_{ij}\sim\mathcal{N}(0,1)$ by Proposition \ref{PROP-RANDOM-MATRIX} whose squared coeffients ${\textstyle\frac{x_j^2}{\|x\|^2}}$ sum up to one. By Proposition \ref{LK-GAUSS-LEM} this means $U(\cdot){\textstyle\frac{x}{\|x\|}}\sim\mathcal{N}(0,1,\mathbb{R}^k)$ and we can deduce

\begin{align*}
\P\bigl[\hspace{1pt}\bigl|\|Ux\|-\sqrt{k}\hspace{1pt}\|x\|\bigr|\geqslant \epsilon\sqrt{k}\hspace{1pt}\|x\|\hspace{1pt}\bigr] & = \P\bigl[\hspace{1pt}\bigl|\|U\medfrac{x}{\|x\|}\|-\sqrt{k}\hspace{1pt}\bigr|\geqslant \epsilon\sqrt{k}\,\bigr]\\
&\leqslant 2\exp\bigl(-\medfrac{(\epsilon\sqrt{k})^2}{16}\bigr)\\
&=2\exp\bigl(-\medfrac{\epsilon^2 k}{16}\bigr)
\end{align*}
where we applied Theorem \ref{GA-THM} in the space $\mathbb{R}^k$ to get the estimate.
\end{proof}

Notice that we have on both sides of the estimate the expression $\epsilon\sqrt{k}$. On the right hand side we need this number to be suitable large to get the probability down, but on the right we would rather have it small in order to get the distortion small. This conundrum we can solve by  `absorbing' the factor $\sqrt{k}$ into the notation. That is, for a random matrix $U\sim\mathcal{N}(0,1,\mathbb{R}^{k\times d})$ we consider the \emph{Johnson-Lindenstrauss projection} 
$$
T_{\scriptscriptstyle U}\colon\mathbb{R}^{d}\rightarrow\mathbb{R}^k,\;T_{\scriptscriptstyle U}x:={\textstyle\frac{1}{\sqrt{k}}}Ux.
$$
Then Theorem \ref{RP-THM} reads
$$
\P\bigl[\hspace{1pt}\big|\|T_{\scriptscriptstyle U}x\|-\|x\|\big|\geqslant\epsilon\|x\|\hspace{1pt}\bigr]\leqslant 2\exp(-ck\epsilon^2)
$$
and we can think of $\epsilon$ being small but $k$ being so large that $k\epsilon^2$ is large. If we do this, then we get that $\big|\|T_{\scriptscriptstyle U}x\|-\|x\|\big|<\epsilon\|x\|$ holds with high probability. Since this is a multiplicative error estimate, we can further rewrite this as
$$
\medfrac{\|T_{\scriptscriptstyle U}x\|}{\|x\|}\approx 1.
$$
If we restrict ourselves to $x\in K$ with, e.g., a fixed compact set $K\subseteq\mathbb{R}^d$, then we can even pick $k$ and $\epsilon$ accordingly and achieve that $\|T_{\scriptscriptstyle U}x\|\approx\|x\|$ holds  on whole $K$ with high probability.

\smallskip

The following result will extend the latter to mutual distances within a sets of $n$ points.

\begin{thm}\label{JL-LEM}(Johnson-Lindenstrauss Lemma) Let $0<\epsilon<1$ and $n\geqslant1$ and let $k\geqslant\frac{48}{\epsilon^2}\ln n$. Let $U\sim\mathcal{N}(0,1,\mathbb{R}^{k\times d})$ be a random matrix. Then the Johnson-Lindenstrauss projection $T_{\scriptscriptstyle U}$ satisfies for any set of $n$--many points $x_1,\dots,x_n\in\mathbb{R}^d$ the estimate
$$
\P\bigl[(1-\epsilon)\|x_i-x_j\|\leqslant\|T_{\scriptscriptstyle U}x_i-T_{\scriptscriptstyle U}x_j\|\leqslant (1+\epsilon)\|x_i-x_j\| \text{ for all } i,j\bigr]\geqslant 1-\medfrac{1}{n}.
$$
\end{thm}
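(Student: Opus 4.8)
The plan is to apply the Random Projection Theorem (in its $T_{\scriptscriptstyle U}$-formulation) to each of the finitely many difference vectors $x_i-x_j$ and then take a union bound. First I would observe that there are exactly $\binom n2$ unordered pairs $\{i,j\}$ with $i\neq j$, and that for each such pair the vector $v_{ij}:=x_i-x_j$ is a fixed element of $\mathbb{R}^d$; if $v_{ij}=0$ the two-sided inequality holds trivially for that pair, so we may restrict attention to those pairs with $v_{ij}\neq0$. For a single fixed $v_{ij}\neq0$, Theorem \ref{RP-THM} (rewritten via $T_{\scriptscriptstyle U}x=\frac1{\sqrt k}Ux$ as noted immediately after its proof) gives
$$
\P\bigl[\,\bigl|\|T_{\scriptscriptstyle U}v_{ij}\|-\|v_{ij}\|\bigr|\geqslant\epsilon\|v_{ij}\|\,\bigr]\leqslant 2\exp(-ck\epsilon^2)
$$
with $c=1/16$. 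Since $T_{\scriptscriptstyle U}$ is linear, $T_{\scriptscriptstyle U}v_{ij}=T_{\scriptscriptstyle U}x_i-T_{\scriptscriptstyle U}x_j$, so the complementary event is exactly $(1-\epsilon)\|x_i-x_j\|\leqslant\|T_{\scriptscriptstyle U}x_i-T_{\scriptscriptstyle U}x_j\|\leqslant(1+\epsilon)\|x_i-x_j\|$.

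Next I would bound the failure probability. Using $k\geqslant\frac{48}{\epsilon^2}\ln n$ we get $ck\epsilon^2=\frac{k\epsilon^2}{16}\geqslant 3\ln n$, hence for each pair the probability of failure is at most $2\exp(-3\ln n)=2/n^3$. Taking a union bound over the at most $\binom n2=\frac{n(n-1)}2<\frac{n^2}2$ relevant pairs yields a total failure probability at most $\frac{n^2}2\cdot\frac2{n^3}=\frac1n$. Therefore the probability that the two-sided estimate holds simultaneously for all $i,j$ is at least $1-\frac1n$, which is the claim. (The case $n=1$ is vacuous since there are no pairs, and the bound $1-1/n=0$ is then trivially true.)

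I do not expect a genuine obstacle here: everything reduces to linearity of $T_{\scriptscriptstyle U}$, one application of Theorem \ref{RP-THM} per difference vector, the arithmetic $\frac{48}{\epsilon^2}\ln n\cdot\frac{\epsilon^2}{16}=3\ln n$, and the elementary count $\binom n2<n^2/2$. The only point requiring a word of care is that Theorem \ref{RP-THM} is stated for a fixed nonzero $x$, so one must note that the bound is uniform in that vector and that the random matrix $U$ is the \emph{same} for all pairs — which is fine, since the union bound does not require independence across pairs. The mild sharpening from $48$ (rather than $16$) in the hypothesis is precisely what converts the per-pair bound $2\exp(-ck\epsilon^2)$ into something summable against the $\binom n2$ pairs, leaving the clean final bound $1/n$.
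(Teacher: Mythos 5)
Your proposal is correct and follows essentially the same route as the paper's proof: one application of the Random Projection Theorem per difference vector, the arithmetic $\frac{48}{\epsilon^2}\ln n\cdot\frac{\epsilon^2}{16}=3\ln n$ giving a per-pair failure probability of $2n^{-3}$, and a union bound over ${n\choose 2}\leqslant n^2/2$ pairs. Your explicit handling of the case $x_i=x_j$ (where Theorem \ref{RP-THM} does not directly apply) is a small point of care that the paper glosses over.
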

\begin{proof} Firstly, we fix $i\not=j$, put $x:=x_i-x_j$ and compute
\begin{align*}
&\hspace{-35pt}\P\bigl[\hspace{1pt}\|T_{\scriptscriptstyle U}x_i-T_{\scriptscriptstyle U}x_j\|\not\in \bigl((1-\epsilon)\|x_i-x_j\|,(1+\epsilon)\|x_i-x_j\|\bigr)\bigr] \\
\phantom{xxxx}& = \P\bigl[\hspace{1pt}\|T_{\scriptscriptstyle U}x\|\not\in \bigl((1-\epsilon)\|x\|,(1+\epsilon)\|x\|\bigr)\bigr]\\
& =\P\bigl[\hspace{1pt}\bigl|\|T_{\scriptscriptstyle U}x\|-\|x\|\bigr|\geqslant \epsilon\|x\|\hspace{1pt}\bigr]\\
& =\P\bigl[\hspace{1pt}\bigl|\|Ux\|-\sqrt{k}\hspace{1pt}\|x\|\bigr|\geqslant \epsilon\sqrt{k}\hspace{1pt}\|x\|\hspace{1pt}\bigr]\\
&\leqslant 2\exp\bigl(-k\epsilon^2/16\bigr)
\end{align*}
where we used Theorem \ref{RP-THM} in the last step. Since the inequality $(1-\epsilon)\|x_i-x_j\|\leqslant\|T_{\scriptscriptstyle U}x_i-T_{\scriptscriptstyle U}x_j\|\leqslant (1+\epsilon)\|x_i-x_j\| $ is always true for $i=j$ and there are ${n\choose 2}\leqslant n^2/2$ choices for $i\not=j$, we get
\begin{align*}
&\P\bigl[\,\forall\:i,j\colon (1-\epsilon)\|x_i-x_j\|\leqslant\|T_{\scriptscriptstyle U}x_i-T_{\scriptscriptstyle U}x_j\|\leqslant (1+\epsilon)\|x_i-x_j\| \bigr]\\
& = 1-\P\bigl[\,\exists\:i\not=j\colon \|T_{\scriptscriptstyle U}x_i-T_{\scriptscriptstyle U}x_j\|\not\in \bigl((1-\epsilon)\|x_i-x_j\|,(1+\epsilon)\|x_i-x_j\|\bigr)\bigr]\\
& \geqslant 1-\frac{n^2}{2}\cdot2\exp\bigl(-k\epsilon^2/16\bigr)\\
& \geqslant  1-n^2\exp\bigl(-\bigl(\medfrac{48}{\epsilon^2}\ln n\bigr)\,\epsilon^2/16\bigr)\\
& \geqslant  1-n^2\exp\bigl(\ln n^{-48/16}\bigr)\bigr)\\
& =  1-n^2 n^{-3}\\
& =  1-\medfrac{1}{n}
\end{align*}
where we used the selection of $k$ for the second last estimate.
\end{proof}

Similar to our remarks on Theorem \ref{RP-THM}, we can derive from Theorem \ref{JL-LEM} that 
$$
\medfrac{\|T_{\scriptscriptstyle U}x_i-T_{\scriptscriptstyle U}x_j\|}{\|x_i-x_j\|}\approx 1
$$
holds for all $x_i\not=x_j$ with high probability by picking $\epsilon$ small but $k$ so large that $k\epsilon^2$ exceeds $48\ln(n)$. We point out here that since the logarithm grows very slowly, making the sample set larger does only require to increase $k$ a little. If we agree to pick all the families $x_1,\dots,x_n$ from within a compact set $K\subseteq\mathbb{R}^d$, then we can again achieve that
$$
\|T_{\scriptscriptstyle U}x_i-T_{\scriptscriptstyle U}x_j\|\approx \|x_i-x_j\|
$$
holds with high probability by picking $\epsilon$ and $k$ as explained above. From this stems the interpretation that the Johnson-Lindenstrauss Lemma allows to reduce the dimension of a data set while preserving the mutual distances between the data points up to some small error. Notice that the set of points (if it is only $n$--many) is arbitrary. In particular the projection map $T_{\scriptscriptstyle U}$ and its codomain depend only on $n$ and $\epsilon$. That means after agreeing on the number of points we want to project and the error bound we want to achieve, we get the same estimate for all families $x_1,\dots,x_n$ of $n$--many points. Furthermore, the estimate holds indeed \emph{for all} of the $n$ points and not only \emph{most} (one could think of a result saying that if we pick a pair $(i,j)$ then with probability $1-1/n$ the estimate is true, which would mean that there could be some `bad' pairs whose distance gets distorted by more than $\epsilon$, but this is \emph{not} the case!).

\smallskip

The following figure shows the distortion that occurred depending on $k$ in an experiment compared with the bound provided by the Johnson-Lindenstrauss Lemma, see Problem \ref{JL-SIM}.

\begin{center}
\begin{tikzpicture}

	\begin{axis}
[
axis line style={thick, shorten >=-10pt, shorten <=-10pt},
y=120pt,
axis y line=left,
axis x line=middle,
axis line style={->},
no markers,
tick align=outside,
major tick length=2pt,
ymin=0.05,
ytick={0.2,0.4,...,1.0},
xmin=0,
xtick={0, 100, ..., 1000},
every tick label/.append style={font=\tiny},
xlabel=\small $k$,
ylabel=\small $\epsilon$,
every axis x label/.style={
    at={(ticklabel* cs:1.07)},
    anchor=west,
},
every axis y label/.style={
    at={(ticklabel* cs:1.2)},
    anchor=north,
},
]
		\addplot[ mark=none,fill=black, 
                    fill opacity=0.00] coordinates {
(20,0.64960412)
(40,0.43647613)
(60,0.38151316)
(80,0.32697068)
(100,0.28641338)
(120,0.28950316)
(140,0.29823953)
(160,0.23320381)
(180,0.21251614)
(200,0.23225801)
(220,0.22376387)
(240,0.20401529)
(260,0.18940454)
(280,0.15808729)
(300,0.14830084)
(320,0.16424003)
(340,0.15829449)
(360,0.14747864)
(380,0.16493581)
(400,0.14438415)
(420,0.15194512)
(440,0.13889315)
(460,0.12463126)
(480,0.14175293)
(500,0.14543227)
(520,0.12134347)
(540,0.1235162)
(560,0.11541249)
(580,0.12772562)
(600,0.12293452)
(620,0.13992636)
(640,0.10152627)
(660,0.11849245)
(680,0.10807353)
(700,0.10787662)
(720,0.10581649)
(740,0.10253265)
(760,0.10597364)
(780,0.10030326)
(800,0.09757551)
(820,0.11491439)
(840,0.10146898)
(860,0.11099813)
(880,0.10535177)
(900,0.09765621)
(920,0.09172647)
(940,0.0993466)
(960,0.09343611)
	};
\addplot [domain=100:960, samples=101,dashed]{sqrt(100/x)};
	\end{axis}
\end{tikzpicture}\nopagebreak[4]\begin{fig}\label{JL-FIG}Solid line: Maximal distortion that occurs when $n=300$ points $x\sim\mathcal{N}(0,1,\mathbb{R}^{1000})$ are projected to $\mathbb{R}^k$. Dashed line: JL-bound $\epsilon = (48\ln(n)/k)^{1/2}$.\end{fig}
\end{center}

\smallskip

We conclude with the following corollary that describes the length distortion for all points in $\mathbb{R}^d$ (or in a compact $K\subseteq\mathbb{R}^d)$) that we can expect if we apply the Johnson-Lindenstrauss Projection and pick the parameters to treat sets with $n$-many points.

\begin{cor} Under the assumptions of Theorem \ref{JL-LEM} we have
$$
\P\bigl[(1-\epsilon)\|x\|\leqslant\|T_{\scriptscriptstyle U}x\|\leqslant (1+\epsilon)\|x\|\bigr]\geqslant 1-{\textstyle\frac{1}{n}}
$$
for every $x\in\mathbb{R}^d$.\hfill\qed
\end{cor}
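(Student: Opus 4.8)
The plan is to reduce the statement to the Random Projection Theorem (Theorem \ref{RP-THM}), which already controls $\|Ux\|$, and then to check that the choice $k\geqslant\frac{48}{\epsilon^2}\ln n$ forces the resulting error term to be at most $1/n$.

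First I would dispose of the degenerate cases. If $n=1$ then $1-\frac1n=0$ and there is nothing to prove, and if $x=0$ then $T_{\scriptscriptstyle U}x=0$ so the event $\{(1-\epsilon)\|x\|\leqslant\|T_{\scriptscriptstyle U}x\|\leqslant(1+\epsilon)\|x\|\}$ has probability one. Hence I may assume $n\geqslant2$ and $x\neq0$, which is exactly the range in which Theorem \ref{RP-THM} applies (recall also $0<\epsilon<1$ from the assumptions of Theorem \ref{JL-LEM}).

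Next, since $T_{\scriptscriptstyle U}x=\frac1{\sqrt k}Ux$, multiplying the chain $(1-\epsilon)\|x\|\leqslant\|T_{\scriptscriptstyle U}x\|\leqslant(1+\epsilon)\|x\|$ through by $\sqrt k$ shows it is equivalent to $\bigl|\|Ux\|-\sqrt k\|x\|\bigr|\leqslant\epsilon\sqrt k\|x\|$. Consequently the complement of the event in the corollary is contained in $\bigl\{\bigl|\|Ux\|-\sqrt k\|x\|\bigr|\geqslant\epsilon\sqrt k\|x\|\bigr\}$, and Theorem \ref{RP-THM} yields
$$
\P\bigl[\|T_{\scriptscriptstyle U}x\|\notin[(1-\epsilon)\|x\|,(1+\epsilon)\|x\|]\bigr]\leqslant 2\exp(-ck\epsilon^2)
$$
with $c=1/16$. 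Finally I would substitute $k\geqslant\frac{48}{\epsilon^2}\ln n$, which gives $ck\epsilon^2=\frac{k\epsilon^2}{16}\geqslant3\ln n$ and hence $2\exp(-ck\epsilon^2)\leqslant 2n^{-3}\leqslant n^{-1}$ for $n\geqslant2$; passing to complements yields the claim. (Alternatively, one can simply invoke the Johnson--Lindenstrauss Lemma with the $n$ points $x_1:=x$ and $x_2=\dots=x_n:=0$ and read off the estimate for the pair $i=1$, $j=2$.)

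There is no genuine obstacle here: the only point requiring care is the separate treatment of $x=0$ (equivalently $n=1$), where the multiplicative formulation degenerates; everything else is the routine constant check $\frac{48}{16}=3$ turning $2\exp(-ck\epsilon^2)$ into a bound below $1/n$.
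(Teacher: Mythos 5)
Your proof is correct and follows essentially the route the paper intends: the corollary is stated without proof because it is exactly the single-pair computation already carried out at the start of the proof of the Johnson--Lindenstrauss Lemma (Theorem \ref{RP-THM} plus the check $k\epsilon^2/16\geqslant 3\ln n$, so $2n^{-3}\leqslant n^{-1}$ for $n\geqslant2$). Your handling of the degenerate cases $x=0$ and $n=1$ is a sensible extra precision.
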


\section*{Problems}

\begin{probl}\label{NaiveTailBound} Let $X_i\sim\mathcal{N}(0,1)$ and $X=X_1+\dots+X_d$. \vspace{3pt}

\begin{compactitem}

\item[(i)] Use the formula $\E(f(X_i))=(2\pi)^{-1/2}\int_{\mathbb{R}}f(x)\exp(-x^2/2)\dd x$ to show that $\E(\exp(tX_i))=(1-2t)^{-d/2}$ holds for $t\in(0,1/2)$.\vspace{3pt}

\item[(ii)] Derive the estimate $\P\bigl[X\geqslant a\bigr] \leqslant\inf_{t\in(0,1/2)}\frac{\exp(-ta)}{(1-2t)^{d/2}}$ for $a>0$.\vspace{3pt}
\end{compactitem}
\end{probl}

\begin{probl}\label{AlternativeJLProof} Use Problem \ref{NaiveTailBound} to give an alternative proof of the Johnson-Lindenstrauss Lemma that does not rely on the Gaussian Annulus Theorem.
\end{probl}

\begin{probl}\label{PROB-LANDAU} Restate the Random Projection Theorem and the Johnson-Lindenstrauss Theorem by using Landau symbols to express the dependencies of $\epsilon$, $k$ and $n$.
\end{probl}

\begin{probl}\label{JL-SIM} Let $n>k$ be integers.\vspace{2pt}

\begin{compactitem}
\item[(i)] Implement a random projection $T\colon\mathbb{R}^d\rightarrow\mathbb{R}^k$  and test it for small $d$ and $k$.\vspace{5pt}

\item[(ii)] Put $d=1000$, generate 10\,000 points in $\mathbb{R}^d$ at random, project them via $T$ to $\mathbb{R}^k$ and compute, for different values of $k$, the worst distorsion of a pairwise distance
$$
\epsilon:=\max\Bigl(1-\min_{x\not=y}\medfrac{\|Tx-Ty\|}{\sqrt{k}\hspace{1pt}\|x-y\|}, \max_{x\not=y}\medfrac{\|Tx-Ty\|}{\sqrt{k}\hspace{1pt}\|x-y\|}-1\Bigr)
$$
that occurs.\vspace{3pt}

\item[(iii)] Compare the outcome of your experiment with the relation of $\epsilon$ and $k$ that is given in the Johnson-Lindenstrauss Lemma, by replicating Figure \ref{JL-FIG} based on the data from (ii).

\vspace{3pt}

\item[(iv)] Explain how you would pick $\epsilon$ and $k$ if you are given a dataset in $\mathbb{R}^d$ and want to do a dimensionality reduction that does not corrupt a classifier based on nearest neighbors.

\end{compactitem}
\end{probl}


\chapter{Separating Gaussian data}\label{Ch-Fit}

After having studied in previous chapters the properties of high-dimensional data that comes from a Gaussian distribution, we now want use this knowledge to understand under which conditions it is possible to separate (or `disentangle') the data that stems from two `Gaussians' with variance one and different means. Let us first look at low dimensions, for example at the 2-dimensional data set in Figure \ref{FIG-6-1}, which we generated by drawing samples of equal size from two Gaussians with different centers.

\vspace{-15pt}

\begin{center}
\begin{tikzpicture}[scale=1.1]
	\begin{axis}[
	axis line style={draw=none},
	ymin=-1,
    ymax=4.25,
	ticks=none,
	xmin=-5,
    xmax=4,
	height=5.5cm,
	width=8.5cm,
	yticklabels={,,},
	xticklabels={,,}
	]

\addplot[only marks,mark=*,mark options={scale=0.6, fill=black!50!white, color=black!50!white},text mark as node=true] table[col sep=comma] {2d-Gauss-1.dat};

\addplot[only marks,mark=*,mark options={scale=0.6, fill=black!50!white, color=black!50!white},text mark as node=true] table[col sep=comma] {2d-Gauss-2.dat};
	\end{axis}
\end{tikzpicture}
\vspace{-15pt}
\begin{fig}\label{FIG-6-1}2-dimensional dataset arising\\from two Gaussian distributions.\end{fig}
\end{center}
 
Separation (or disentanglement) in Figure \ref{FIG-6-1} would mean to determine for each point if it came from the lower left Gaussian or from the upper right Gaussian. Firstly, we observe that this cannot be done in a deterministic way but only `with high probability for almost all points'. Secondly we see, that for this to be possible it is necessary that the distance between the two means is sufficiently large. If this is the case, we might have a good chance that pairs of data points with small distances belong to the same Gaussian whereas pairs with large distances belong to different Gaussians. Moreover, there will be only a few points located half way between the centers which means that if we start by picking an arbitrary data point and then compute the other points' distances from the fixed one, we may expect that two clusters, corresponding to the two Gaussians, appear.

\smallskip

In high dimensions the picture looks different. We know that here most data points of a Gaussian are located in an annulus with radius $\sqrt{d}$ around the mean of the Gaussian. An analogue of the assumption in the 2-dimensional case would thus require that the distance $\Delta:=\|\mu_1-\mu_2\|$ between the means is larger than $2\sqrt{d}$, see Figure \ref{FIG-6-2}, implying that the data sets will be almost disjoint.

\begin{center}
\begin{tikzpicture}

\begin{scope}
\fill[pattern=north west lines,opacity=.6,draw] (-1.8,0) circle [radius=1.15];
\fill [white, draw=black] (-1.8,0) circle [radius=0.9];
\end{scope}

\fill[fill=black] (-1.8,0) circle (1pt);
\node (1) at (-0.7,1) {\footnotesize$\sqrt{d}$};
\draw[-latex] (-1.8,0)--(-1,0.73);
\node (2) at (-2.075,-0.1) {\footnotesize$\mu_1$};
\begin{scope}
\fill[pattern=north west lines,opacity=.6,draw] (1.8,0.7) circle [radius=1.15];
\fill [white, draw=black]  (1.8,0.7) circle [radius=0.9];
\end{scope}

\fill[fill=black] (1.8,0.7) circle (1pt);
\draw[-latex] (1.8,0.7)--(1.05,-0.1);
\node (1) at (0.73,-0.4) {\footnotesize$\sqrt{d}$};
\node (2) at (2.1,0.7) {\footnotesize$\mu_2$};
\end{tikzpicture}
\nopagebreak[4]
\begin{fig}\label{FIG-6-2}Two high-dimensional Gaussians with distance $\|\mu_1-\mu_2\|>2\sqrt{d}$.\end{fig}
 \end{center}
 
If this is the case, then we may expect that separation can be done analogously to the low-dimensional situation. There are however two principal differences: Firstly, our asymptotic approach to high dimensions means that we do not consider $d$ as a constant but always ask what happens for $d\rightarrow\infty$. Therefore, we need to understand the distance of the means $\Delta=\Delta(d)$ as a function of the dimension as well. Otherwise, i.e., for constant $\Delta$ and dimension tending to infinity, the two annuli would eventually overlap more and more and become almost coincident. From this we see that we have to require that $\Delta\rightarrow\infty$. Our low-dimensional arguments suggest that $\Delta>2\sqrt{d}$ yields one scenario in which disentanglement can be achieved. On the other hand, the picture in Figure \ref{FIG-6-3} suggests that the overlap might also be small, even if $\Delta\leqslant2\sqrt{d}$.

\smallskip

We emphasize that Figure \ref{FIG-6-2} and in particular Figure \ref{FIG-6-3} constitute rather imperfect 2-dimensional abstract illustrations of high dimensional facts. Notice, e.g., that any two points in one of the annuli have, with high probability, distance $\sqrt{2d}$. The picture does however not represent this at all!
 
\begin{center}
\begin{tikzpicture}

\begin{scope}
\fill[pattern=north west lines,opacity=.6,draw] (-1.8,0) circle [radius=1.15];
\fill [white, draw=black!60!white] (-1.8,0) circle [radius=0.9];
\end{scope}

\fill[fill=black] (-1.8,0) circle (1pt);
\node (1) at (-0.7,1) {\footnotesize$\sqrt{d}$};
\draw[-latex] (-1.8,0)--(-1,0.73);

\node (2) at (-2.075,-0.1) {\footnotesize$\mu_1$};

\fill[pattern=north west lines,even odd rule,opacity=.6,draw] (-.5,0.5) circle (1.15) (-.5,0.5) circle (0.9);

\fill[fill=black] (-.5,0.5) circle (1pt);
\draw[-latex] (-.5,0.5)--(-1.28,-0.25);
\node (1) at (-1.65,-0.5) {\footnotesize$\sqrt{d}$};
    \node (2) at (-.2,0.5) {\footnotesize$\mu_2$};
    
\end{tikzpicture}
\nopagebreak[4]
\begin{fig}\label{FIG-6-3}Two high-dimensional Gaussians with distance $\|\mu_1-\mu_2\|\leqslant2\sqrt{d}$.\end{fig}
\end{center}

Notice that also in this second scenario $\Delta\rightarrow\infty$ must hold in order to avoid that for large $d$ the the two annuli become almost coincident. The picture suggests nevertheless that this can be achieved also for $\Delta$'s that grow slower than $2\sqrt{d}$ and experiments corroborate this idea. For Figure \ref{SEP-FIG-1} below we considered two Gaussians in dimension $d=1000$, sampled 100 points from each of them, and plotted the distribution of the sample's mutual distances.
\begin{center}
\begin{tikzpicture}

	\begin{axis}
[
axis line style={thick, shorten >=-5pt, shorten <=-2pt},
y=0.075pt,
axis y line=left,
axis x line=middle,
axis line style={->},
no markers,
tick align=outside,
major tick length=2pt,
ymin=0.0,
ymax=1050,
ytick={0,200,...,1000},
xmin=38,xmax=61,
xtick={40,42, ..., 60},
every tick label/.append style={font=\tiny},
xlabel=\small $\|x-y\|$,
every axis x label/.style={
    at={(ticklabel* cs:1.07)},
    anchor=west,
},
every axis y label/.style={
    at={(ticklabel* cs:1.2)},
    anchor=north,
},
]
\addplot[ mark=none,fill=black, 
                    fill opacity=0.05] coordinates {
(40.0,0)
(40.21,0)
(40.42,0)
(40.63,1)
(40.84,0)
(41.05,2)
(41.26,4)
(41.47,7)
(41.68,6)
(41.89,29)
(42.1,27)
(42.31,51)
(42.52,104)
(42.73,130)
(42.94,228)
(43.15,288)
(43.36,404)
(43.57,502)
(43.78,558)
(43.99,696)
(44.2,787)
(44.41,832)
(44.62,864)
(44.83,769)
(45.04,697)
(45.25,632)
(45.46,562)
(45.67,450)
(45.88,401)
(46.09,282)
(46.3,190)
(46.51,145)
(46.72,100)
(46.93,55)
(47.14,34)
(47.35,32)
(47.56,17)
(47.77,7)
(47.98,4)
(48.19,2)
(48.4,1)
(48.61,0)
(48.82,0)
(49.03,0)
(49.24,0)
(49.45,0)
(49.66,0)
(49.87,0)
(50.08,0)
(50.29,0)
(50.5,0)
(50.71,0)
(50.92,2)
(51.13,6)
(51.34,5)
(51.55,7)
(51.76,10)
(51.97,29)
(52.18,46)
(52.39,83)
(52.6,98)
(52.81,171)
(53.02,237)
(53.23,341)
(53.44,404)
(53.65,509)
(53.86,626)
(54.07,687)
(54.28,732)
(54.49,769)
(54.7,755)
(54.91,736)
(55.12,733)
(55.33,613)
(55.54,505)
(55.75,442)
(55.96,394)
(56.17,282)
(56.38,222)
(56.59,152)
(56.8,126)
(57.01,86)
(57.22,64)
(57.43,40)
(57.64,39)
(57.85,18)
(58.06,15)
(58.27,5)
(58.48,5)
(58.69,3)
(58.9,1)
(59.11,0)
(59.32,0)
(59.53,1)
(59.74,0)
(59.95,0)
(60.16,0)
(60.37,0)
(60.58,0)
(60.79,0)
	};
	\end{axis}
\end{tikzpicture}\nopagebreak[4]\begin{fig}\label{SEP-FIG-1}Mutual distances of points in a set consisting of 100 points sampled from $\mathcal{N}(0,1,\mathbb{R}^{1000})$ and 100 points sampled from $\mathcal{N}(\mathds{1},1,\mathbb{R}^{1000})$, where $\mathds{1}=(1,1,\dots)$.\end{fig}
\end{center}

Indeed, the distances of points that belong to the same Gaussian accumulate around $\sqrt{2d}\approx44.72$. This suggests that the other `bump' in the picture belongs to those pairs of points which come from different Gaussians. In our example we have a distance of $\Delta\approx31.62$ between the Gaussians' centers and $2\sqrt{d}\approx63.25$. This shows that we are in the situation of Figure \ref{FIG-6-3}, i.e., the Gaussians overlap. The distribution of mutual distances suggest that it nevertheless should be possible to disentangle them with high accuracy. Indeed, if we pick for a start one point at random and then take its 100 nearest neighbors, Figure \ref{SEP-FIG-1} suggests that our odds are good that these 100 points will belong to one of the Gaussians and the rest of the points to the other.

\smallskip

Our aim now is to make the above formal. We start by computing the expected values for the distance between points that belong to different Gaussians.

\begin{thm}\label{EV-THM} Let $\mu_1$, $\mu_2\in\mathbb{R}^d$ and $\Delta:=\|\mu_1-\mu_2\|$. Let $X_1\sim\mathcal{N}(\mu_1,1,\mathbb{R}^d)$ and $X_2\sim\mathcal{N}(\mu_2,1,\mathbb{R}^d)$ be random vectors.\vspace{1pt}
\begin{compactitem}

\item[(i)] $\forall\:d\geqslant1\colon\bigl|\E\bigl(\|X_1-X_2\|-\sqrt{\Delta^2+2d}\hspace{2pt}\bigr)\bigr|\leqslant \medfrac{3d+8\Delta^2+\sqrt{24}\Delta d^{1/2}}{2(\Delta^2+2d)^{3/2}}$.\vspace{2pt}

\item[(ii)] $\forall\:d\geqslant1\colon\V(\|X_1-X_2\|)\leqslant \Bigl(\medfrac{3d+8\Delta^2+\sqrt{24}\Delta d^{1/2}}{2(\Delta^2+2d)^{3/2}}\Bigr)^2 + \medfrac{3d+8\Delta^2+\sqrt{24}\Delta d^{1/2}}{\Delta^2+2d}$.
\end{compactitem}
\end{thm}\vspace{-6pt}
\begin{proof} (i) Let $X_1$, $X_2\colon\Omega\rightarrow\mathbb{R}^d$ be as above. We define the auxiliary random vector $\tilde{X}_1:=\mu_2+X_1-\mu_1$. Then $\tilde{X}_1\sim\mathcal{N}(\mu_2,1,\mathbb{R}^d)$ holds. Moreover, $\tilde{X}_1$ and $X_2$ are independent. For a fixed $\omega\in\Omega$ we get the following picture for the values $x_1:=X_1(\omega)$, $x_2:=X_2(\omega)$ and $\tilde{x}_1:=\tilde{X}_1(\omega)\in\mathbb{R}^d$. Firstly, in view of Theorem \ref{EXP-VAR-NORM} we may assume that $x_1$ will be located near the surface of a ball with radius $\sqrt{d}$ centered at $\mu_1$ and that $x_2$ and $\tilde{x}_1$ will both be located near the surface of a ball with radius $\sqrt{d}$ centered at $\mu_2$. By definition of $\tilde{x}_1$ the lines connecting $\mu_1$ with $x_1$ and $\mu_2$ with $\tilde{x}_1$, respectively, will be parallel. Moreover, if we think of a coordinate system in which $\mu_2$ is the origin, then we may expect that $\tilde{x}_1$ and $x_2$ have a distance of approximately $\sqrt{2d}$, compare Theorem \ref{EXP-VAR-DIST}. Finally, we think of the line going from $\mu_2$ to $\mu_1$ marking the north pole of $\B(\mu_2,\sqrt{d}\hspace{1pt})$. Then Theorem \ref{EXP-VAR-ANGLE} suggests that $\tilde{x}_1$ and $x_2$ will be located close to the equator of $\B(\mu_2,\sqrt{d}\hspace{1pt})$, since the scalar products $\langle{}\tilde{x}_1-\mu_2,\mu_1-\mu_2\rangle{}$ and $\langle{}x_2-\mu_2,\mu_1-\mu_2\rangle{}$ both will be close to zero. In the triangle given by all three points we thus may expect that the angle $\theta$ will be close to $90^{\circ}$ which implies that $\|x_1-x_2\|^2\approx\Delta^2+2d$ should hold.
\begin{center}
\includegraphics{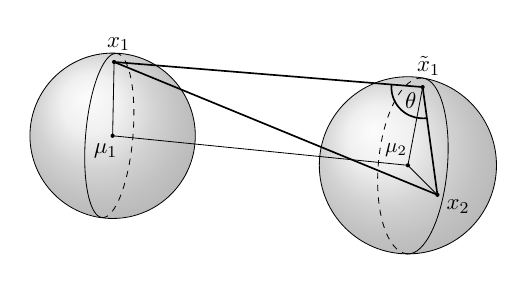}
\nopagebreak[4]
\vspace{-5pt}
\begin{fig}\label{FIG-6-30}Points sampled from different unit Gaussians lead to\\an almost orthogonal triangle with hypothenuse between $x_1$ and $x_2$.\end{fig}
\end{center}

Following the above intuition, we will now formally use the cosine law and our results from Chapter \ref{Ch-INTRO} to compute the expectation. We put $\xi:=\mu_1-\mu_2$ and start with
\begin{equation}\label{COS-LAW}
\begin{array}{rl}\vspace{3pt}
\|X_1-X_2\|^2\hspace{-7pt}\,& = \|X_2-\tilde{X}_1\|^2+\|X_1-\tilde{X}_1\|^2 - 2\|X_2-\tilde{X}_1\|\|\tilde{X}_1-X_1\|\cos(\theta)\\\vspace{3pt}
& = \|X_2-\tilde{X}_1\|^2+\|\mu_1-\mu_2\|^2 - 2\langle{}X_2-\tilde{X}_1,X_1-\tilde{X}_1\rangle{}\\\vspace{3pt}
& = \|X_2-\tilde{X}_1\|^2+\Delta^2 -2(\langle{}X_2-\mu_2,\xi\rangle{}+ \langle{}X_1-\mu_1,-\xi\rangle{})=:(\circ)
\end{array}
\end{equation}
where we know $\E(\|X_2-\tilde{X}_1\|^2)=2d$ since $\tilde{X}_1\sim\mathcal{N}(\mu_2,1,\mathbb{R}^d)$ by \eqref{EXP-NORM-SQUARED}. Moreover, by Theorem \ref{EXP-VAR-ANGLE} we get $\E(\langle{}X_2-\mu_2,\xi\rangle{})=\E(\langle{}X_1-\mu_1,-\xi\rangle{})=0$ which leads to
$$
\E(\|X_1-X_2\|^2)=2d + \Delta^2 + 2(0 - 0) = \Delta+2d.
$$
Now we estimate the variance of $\|X_1-X_2\|^2$ but we have to be careful as the three non-constant expressions in \eqref{COS-LAW} are not independent. Indeed, from the second line of \eqref{COS-LAW} we get
\begin{align*}
\V(\|X_1-X_2\|^2) & = \V(\|X_2-\tilde{X}_1\|^2 + \Delta^2 - 2\langle{}X_2-\tilde{X}_1,X_1-\tilde{X}_1\rangle{})\\
& = \V(\|X_2-\tilde{X}_1\|^2) + 4\V(\langle{}X_2-\tilde{X}_1,X_1-\tilde{X}_1\rangle{})\\
&  \hspace{30pt} + \Cov(\|X_2-\tilde{X}_1\|^2,-2\langle{}X_2-\tilde{X}_1,X_1-\tilde{X}_1\rangle{})\\
& \leqslant \V(\|X_2-\tilde{X}_1\|^2) + 4\V(\langle{}X_2-\tilde{X}_1,X_1-\tilde{X}_1\rangle{})\\
&  \hspace{30pt} + \bigl(\V(\|X_2-\tilde{X}_1\|^2)\cdot\V(-2\langle{}X_2-\tilde{X}_1,X_1-\tilde{X}_1\rangle{})\bigr)^{1/2}=:(*)
\end{align*}
where we know, see Problem \ref{PROB-2}, that $\V(\|X_2-\tilde{X}_1\|^2)\leqslant 3d$ holds. For the second term under the root we compute
\begin{align*}
\V(\langle{}X_2-\tilde{X}_1,X_1-\tilde{X}_1\rangle{}) & = \V(\langle{}X_2-\mu_2,\mu_1-\mu_2\rangle{} + \langle{}X_1-\mu_1,\mu_2-\mu_1\rangle{} )\\
& = \V(\langle{}X_2-\mu_2,\mu_1-\mu_2\rangle{}) + \V(\langle{}X_1-\mu_1,\mu_2-\mu_1\rangle{} )\\
& \leqslant 2\|\mu_1-\mu_2\|^2
\end{align*}
where we use $Z:=X_i-\mu_i\sim\mathcal{N}(0,1,\mathbb{R}^d)$, $\xi:=\pm(\mu_1-\mu_2)$ together with Theorem \ref{EXP-VAR-ANGLE} to get $\V(\langle{}Z,\xi\rangle{}) = \|\xi\|^2$. This allows us to continue the estimate of the variance as follows
$$
(*) \leqslant 3d+8\|\mu_1-\mu_2\|^2 + (3d\cdot8\|\mu_1-\mu_2\|^2)^{1/2}= 3d+8\Delta^2+\sqrt{24}\Delta d^{1/2}.
$$
Now we adapt the method used in the proof of Theorem \ref{EXP-VAR-NORM} and split
$$
\|X_1-X_2\|-\sqrt{\Delta^2+2d} = \medfrac{\|X_1-X_2\|^2-(\Delta^2+2d)}{2\sqrt{\Delta^2+2d}} - \medfrac{(\|X_1-X_2\|^2-(\Delta^2+2d))^2}{2\sqrt{\Delta^2+2d}(\|X_1-X_2\|+\sqrt{\Delta^2+2d})^2}
$$
where we know that the expectation of the first term is zero. The expectation of the second term we estimate by
$$
0\leqslant \E\Bigl(\medfrac{(\|X_1-X_2\|^2-(\Delta^2+2d))^2}{2\sqrt{\Delta^2+2d}(\|X_1-X_2\|+\sqrt{\Delta^2+2d})^2}\Bigr)\leqslant \medfrac{\V(\|X_1-X_2\|^2)}{2(\Delta^2+2d)^{3/2} }\leqslant \medfrac{3d+8\Delta^2+\sqrt{24}\Delta d^{1/2}}{2(\Delta^2+2d)^{3/2} }
$$
where we used our previous estimate for $\V(\|X_1-X_2\|^2)$. This establishes (i).

\smallskip

(ii) Finally we get
\begin{align*}
\V(\|X_1-X_2\|) &= \bigl|\E\bigl((\|X_1-X_2\|)^2\bigr) - \bigl(\E(\|X_1-X_2\|)\bigr)^2\bigr|\\
& = \bigl|(\Delta^2+2d) - \bigl[\E\bigl(\|X_1-X_2\|-\sqrt{\Delta^2+2d}\hspace{2pt}\bigr)+\sqrt{\Delta^2+2d}\hspace{2pt}\bigr]^2\bigr|\\
& = \bigl|(\Delta^2+2d) -\bigl[\bigl(\E\bigl(\|X_1-X_2\|-\sqrt{\Delta^2+2d}\hspace{2pt}\bigr)\bigr)^2\\
&\hspace{50pt}+2\E\bigl(\|X_1-X_2\|-\sqrt{\Delta^2+2d}\hspace{2pt}\bigr)\sqrt{\Delta^2+2d}+(\Delta^2+2d)\bigr]\bigr|\\
& \leqslant \bigl|\E\bigl(\|X_1-X_2\|-\sqrt{\Delta^2+2d}\hspace{2pt}\bigr)\bigr|^2\\
&\hspace{50pt} + 2\bigr|\E\bigl(\|X_1-X_2\|-\sqrt{\Delta^2+2d}\hspace{2pt}\bigr)\bigr|\sqrt{\Delta^2+2d}
\end{align*}
which shows (ii) by employing twice the estimate from (i).
\end{proof}

In the following results we understand $\Delta=\Delta(d)$ as a function of the dimension. To keep the notation short we will however drop the `of $d$' most of the time.

\begin{cor}\label{EV-THM-COR} Let $\mu_1$, $\mu_2\in\mathbb{R}^d$ and $\Delta:=\|\mu_1-\mu_2\|$. Let $X_1\sim\mathcal{N}(\mu_1,1,\mathbb{R}^d)$ and $X_2\sim\mathcal{N}(\mu_2,1,\mathbb{R}^d)$ be random variables. Assume either that there exists $d_0\geqslant1$ such that $\Delta\geqslant2\sqrt{d}$ holds for all $d\geqslant d_0$ or that there exists $d_0\geqslant1$ such that $\Delta<2\sqrt{d}$ holds for all $d\geqslant d_0$. Then
$$
\lim_{d\rightarrow\infty}\E\bigl(\|X_1-X_2\|-\sqrt{\Delta^2+2d}\hspace{2pt}\bigr)=0 \;\;\text{ and }\;\; \limsup_{d\rightarrow\infty}\V\bigl(\|X_1-X_2\|\bigr)\leqslant23.
$$
\end{cor}
\begin{proof} We start with the first scenario, assume $\Delta\geqslant2\sqrt{d}$, i.e., $d\leqslant\Delta^2/4$, and estimate
$$
\medfrac{3d+8\Delta^2+\sqrt{24}\Delta d^{1/2}}{2(\Delta^2+2d)^{3/2}} \leqslant \medfrac{(3/4)\Delta^2+8\Delta^2+\sqrt{6}\Delta^2}{2(\Delta^2)^{3/2}}\leqslant\medfrac{3/4+8+\sqrt{6}}{4\sqrt{d}}\xrightarrow{d\rightarrow\infty}0.
$$
For the second scenario assume $\Delta<2\sqrt{d}$. Then
$$
\medfrac{3d+8\Delta^2+\sqrt{24}\Delta d^{1/2}}{2(\Delta^2+2d)^{3/2}} \leqslant \medfrac{3d+32d+2\sqrt{24}d}{2(2d)^{3/2}} = \medfrac{17+\sqrt{24}}{2\sqrt{2d}}\xrightarrow{d\rightarrow\infty}0
$$
holds. This shows that the expectation tends to zero. For the variance we get from the above that the first term will tend to zero and it thus remains to bound the second. In the first scenario we estimate
$$
\medfrac{3d+8\Delta^2+\sqrt{24}\Delta d^{1/2}}{\Delta^2+2d}\leqslant\medfrac{(3/4+8+\sqrt{6})\Delta^2}{\Delta^2+2d}\leqslant\medfrac{12}{1+(2d/\Delta^2)}\leqslant 12
$$
and in the second
$$
\medfrac{3d+8\Delta^2+\sqrt{24}\Delta d^{1/2}}{\Delta^2+2d} \leqslant\medfrac{45d}{\Delta^2+2d}= \medfrac{45}{(\Delta^2/d)+2}\leqslant \medfrac{45}{2}
$$
which finishes the proof.
\end{proof}

The above suggests that if we pick two points $x$ and $y$ from Gaussians with different centers at random, then we have to expect that $\|x-y\|\approx\sqrt{\Delta^2+2d}$ holds for large $d$ and that the distribution does not spread out when $d$ increases. On the other hand we know already from Chapter \ref{Ch-INTRO} that $\|x-y\|\approx\sqrt{2d}$ for large $d$ provided that $x$ and $y$ are sampled from the same Gaussian. This implies that the question if the two Gaussians can be separated, requires that the distance $\sqrt{\Delta^2+2d}-\sqrt{2d}$ between the two expectational values is large enough. For the two scenarios covered by Corollary \ref{EV-THM-COR} we get the following.

\begin{lem}\label{delta-LEM} Let $\Delta\colon\mathbb{N}\rightarrow(0,\infty)$ and $\delta\colon\mathbb{N}\rightarrow(0,\infty)$, $\delta(d):=\sqrt{\Delta^2+2d}-\sqrt{2d}$.
\begin{compactitem}\vspace{2pt}

\item[(i)] If there exists $d_0\geqslant1$ such that $\Delta\geqslant2\sqrt{d}$ holds for all $d\geqslant d_0$, then $\delta(d)\geqslant\sqrt{d}$ holds for $d\geqslant d_0$.\vspace{4pt}

\item[(ii)] If there exists $d_0\geqslant1$, $c>0$ and $\alpha\geqslant0$ such that $cd^{\hspace{1pt}1/4+\alpha}\leqslant\Delta<2\sqrt{d}$ holds for all $d\geqslant d_0$, then $\delta(d)\geqslant (c^2/4)d^{\hspace{1pt}2\alpha}$ holds for $d\geqslant d_0$.

\end{compactitem}
\end{lem}

\begin{proof} (i) In this case we compute
$$
\delta(d)=\sqrt{\Delta^2+2d}-\sqrt{2d} \geqslant \sqrt{4d+2d}-\sqrt{2d} = (\sqrt{6}-\sqrt{2})\sqrt{d}\geqslant\sqrt{d}.
$$
(ii) Using $\Delta^2\leqslant4d$ and $\Delta>cd^{\hspace{1pt}1/4+\alpha}$ we get
$$
\delta(d)=\medfrac{\Delta^2}{\sqrt{\Delta^2+2d}+\sqrt{2d}}\geqslant\medfrac{(cd^{\hspace{1pt}1/4+\alpha})^2}{\sqrt{6d}+\sqrt{2d}}\geqslant \medfrac{c^2d^{\hspace{1pt}2\alpha}}{(\sqrt{6}+\sqrt{2})}\geqslant \medfrac{c^2}{4}\cdot d^{\hspace{1pt}2\alpha}
$$
as claimed.
\end{proof}

We will see later how Lemma \ref{delta-LEM} outlines for the situation sketched in Figure \ref{FIG-6-3}, how fast $\Delta$ needs to grow in order to separate two Gaussians. Before we discuss this, we need however the following explicit probability estimate complementing our results in Theorem \ref{EV-THM} and Corollary \ref{EV-THM-COR} on expectation and variance.

\begin{thm}\label{SEP-THM-2} Let $\mu_1$, $\mu_2\in\mathbb{R}^d$ and let $\Delta:=\|\mu_1-\mu_2\|>0$. Let $x_1$ and $x_2$ be drawn at random with respect to Gaussian distribution with unit variance and means $\mu_1$ and $\mu_2$, respectively. Then 
$$
\P\bigl[\hspace{1pt}\bigl|\|x_1-x_2\|-\sqrt{\Delta^2+2d}\hspace{2pt}\bigr|\geqslant\epsilon\hspace{1pt}\bigr] \leqslant \medfrac{54}{\epsilon}+\medfrac{8}{\sqrt{d}}+\medfrac{20}{\epsilon\sqrt{d}}
$$
holds for $\epsilon>54$ and $d\geqslant\epsilon^2/9$.
\end{thm}
\begin{proof} Formally, we consider random variables $X_i\sim\mathcal{N}(\mu_i,1,\mathbb{R}^d)$ for $i=1,2$ and we estimate 
$$
P:=\P\bigl[\bigl|\|X_1-X_2\|-\sqrt{\Delta^2+2d}\hspace{2pt}\bigr|\geqslant\epsilon\bigr]
$$
by the same trick that we used already in Theorem \ref{GA-THM} and Theorem \ref{THM-distance-of-two-points-sqrt-2d}, namely by multiplying the left side of estimate in the square brackets by $\|X_1-X_2\|+\sqrt{\Delta^2+2d}$ and the right side by $\sqrt{\Delta^2+2d}$. This way we get
$$
P \leqslant \P\bigl[\hspace{1pt}\bigl|\|X_1-X_2\|^2-(\Delta^2+2d)\bigr|\geqslant\epsilon\sqrt{\Delta^2+2d}\hspace{2pt}\bigr]=:(\circ).
$$
Now we introduce the auxiliary random variable $\tilde{X}_1:=\mu_2+X_1-\mu_1\sim\mathcal{N}(\mu_2,1,\mathbb{R}^d)$ and formula \eqref{COS-LAW} that we derived from the cosine law to get
\begin{align*}
(\circ)& = \P\bigl[\hspace{1pt}\bigl|\|X_1-X_2\|^2-(\Delta^2+2d)\bigr|\geqslant\epsilon\sqrt{\Delta^2+2d}\hspace{2pt}\bigr]\\
& = 1 - \P\bigl[-\epsilon\sqrt{\Delta^2+2d}\leqslant\|X_2-\tilde{X}_1\|^2+\Delta^2-2\bigl(\langle{}X_2-\mu_2,\mu_1-\mu_2\rangle{}\\
& \hspace{50pt} +\langle{}X_1-\mu_1,\mu_2-\mu_1\rangle{}\bigr)-\Delta^2-2d\leqslant\epsilon\sqrt{\Delta^2+2d}\hspace{2pt}\bigr]\\
& \leqslant 1 - \P\bigl[-\medfrac{\epsilon\sqrt{\Delta^2+2d}}{3}\leqslant\|X_2-\tilde{X}_1\|^2-2d\leqslant\medfrac{\epsilon\sqrt{\Delta^2+2d}}{3}\hspace{2pt}\bigr]\\
& \hspace{50pt}\cdot\P\bigl[-\medfrac{\epsilon\sqrt{\Delta^2+2d}}{3}\leqslant-2\langle{}X_2-\mu_2,\mu_1-\mu_2\rangle{}\leqslant\medfrac{\epsilon\sqrt{\Delta^2+2d}}{3}\hspace{2pt}\bigr]\\
& \hspace{50pt}\cdot\P\bigl[-\medfrac{\epsilon\sqrt{\Delta^2+2d}}{3}\leqslant-2\langle{}X_1-\mu_1,\mu_2-\mu_1\rangle{}\leqslant\medfrac{\epsilon\sqrt{\Delta^2+2d}}{3}\hspace{2pt}\bigr]\\
& \leqslant 1 - \P\bigl[\hspace{1pt}\bigl|\|X_2-\tilde{X}_1\|^2-2d\hspace{1pt}\bigr|\leqslant\medfrac{\epsilon\sqrt{2d}}{3}\hspace{2pt}\bigr]\cdot\P\bigl[\hspace{1pt}|\langle{}Z,\xi\rangle{}|\leqslant\medfrac{\epsilon\sqrt{\Delta^2}}{6}\hspace{2pt}\bigr]^2 =:(*)
\end{align*}
where we use $Z:=X_i-\mu_i\sim\mathcal{N}(0,1,\mathbb{R}^d)$ and $\xi:=\mu_1-\mu_2$. In order to continue we need to estimate both probabilities in $(*)$ from below. From Corollary \ref{COR-distance-of-two-points-sqrt-2d} we get for $\epsilon/3>18$ and $d\geqslant(\epsilon/3)^2$ the estimate
$$
\P\bigl[\hspace{1pt}\bigl|\|X_2-\tilde{X}_1\|^2-2d\hspace{1pt}\bigr|\leqslant\medfrac{\epsilon\sqrt{2d}}{3}\hspace{2pt}\bigr]\geqslant1-\bigl(\medfrac{18}{\epsilon/3}+\medfrac{8}{\sqrt{d}}\bigr)=1-\bigl(\medfrac{54}{\epsilon}+\medfrac{8}{\sqrt{d}}\bigr).
$$
Using Corollary \ref{COR-ANGLE-ESTIM} we get
$$
\P\bigl[\hspace{1pt}|\langle{}Z,\xi\rangle{}|\leqslant\medfrac{\epsilon\sqrt{\Delta^2}}{6}\hspace{1pt}\bigr]\geqslant1-\medfrac{4}{\sqrt{2\pi}}\medfrac{\|\xi\|}{(\epsilon\Delta/6)\sqrt{d}} = 1-\medfrac{24}{\sqrt{2\pi}\epsilon\sqrt{d}}\geqslant1-\medfrac{10}{\epsilon\sqrt{d}}.
$$
Combining both leads to
\begin{align*}
(*) & = 1 - \P\bigl[\hspace{1pt}\bigl|\|X_2-\tilde{X}_1\|^2-2d\hspace{1pt}\bigr|\leqslant\medfrac{\epsilon\sqrt{2d}}{3}\hspace{2pt}\bigr]\cdot\P\bigl[\hspace{1pt}\bigl|\langle{}Z,\xi\rangle{}\bigr|\leqslant\medfrac{\epsilon\sqrt{\Delta^2}}{6}\hspace{2pt}\bigr]^2\\
&\leqslant 1-\Bigl[\hspace{1pt}1-\bigl(\medfrac{54}{\epsilon}+\medfrac{8}{\sqrt{d}}\bigr)\Bigr]\Bigl[1-2\cdot\medfrac{10}{\epsilon\sqrt{d}}+\bigl(\medfrac{10}{\epsilon\sqrt{d}}\bigr)^2\hspace{1pt}\Bigr]\\
&\leqslant 1-\Bigl[\hspace{1pt}1-\bigl(\medfrac{54}{\epsilon}+\medfrac{8}{\sqrt{d}}\bigr)\Bigr]\Bigl[1-\medfrac{20}{\epsilon\sqrt{d}}\hspace{1pt}\Bigr]\\
&\leqslant 1- \Bigl[\hspace{1pt}1-\bigl(\medfrac{54}{\epsilon}+\medfrac{8}{\sqrt{d}}\bigr)-\medfrac{20}{\epsilon\sqrt{d}}\hspace{1pt}\Bigr]\\
&=\medfrac{54}{\epsilon}+\medfrac{8}{\sqrt{d}}+\medfrac{20}{\epsilon\sqrt{d}}
\end{align*}
which finishes the proof.
\end{proof}

The following is the core result on separability as we can use it to see that points which have `small' mutual distance belong to the same Gaussian with high probability.

\begin{thm}\label{SEP-THM}(Separation Theorem) Let $\mu_1$, $\mu_2\in\mathbb{R}^d$ and put $\Delta:=\|\mu_1-\mu_2\|$. Let $\epsilon_1>18$ and  $\epsilon_2>54$ be constant and assume that there is $d_0\geqslant1$ such that $\sqrt{\Delta^2+2d}-\sqrt{2d}>\epsilon_1+\epsilon_2$ holds for $d\geqslant d_0$. Let $X_i\sim\mathcal{N}(\mu_i,1,\mathbb{R}^d)$ and $S_i:=\{x_i^{\scriptscriptstyle(1)},\dots,x_i^{\scriptscriptstyle(n)}\}$ be samples of $X_i$ for $i=1,2$ such that $S_1\cap S_2=\emptyset$. Let $x,y\in S_1\cup S_2$. Then
$$
\liminf_{d\rightarrow\infty}\;\P\bigl[\hspace{1pt}x,\,y \text{ come from same Gaussian}\:\big|\:|\|x-y\|-\sqrt{2d}\hspace{2pt}|<\epsilon_1 \hspace{1pt}\bigr]\geqslant\medfrac{1-18/\epsilon_1}{1+54/\epsilon_2}
$$
holds.
\end{thm}

\begin{proof} On the basis of our prior experiments, see Figure \ref{SEP-FIG-1}, and in view of Theorem \ref{EXP-VAR-DIST} and Corollary \ref{EV-THM-COR} we expect that the mutual distances will accumulate around the values $\sqrt{2d}$ and $\sqrt{\Delta^2+2d}$. Our assumptions imply $\sqrt{2d}+\epsilon_1<\sqrt{\Delta^2+2d}-\epsilon_2$ which guarantees that $\Bbar(\sqrt{2d},\epsilon_1)$ and $\Bbar(\sqrt{\Delta^2+2d},\epsilon_2)$ have empty intersection, i.e., there is some space between the two `bumps' sketched in Figure \ref{SEP-FIG-2} below.
\begin{center}
\begin{tikzpicture}

	\begin{axis}
[
axis line style={thick, shorten >=-5pt, shorten <=-2pt},
y=60pt,
axis y line=left,
axis x line=middle,
axis line style={->},
no markers,
tick align=outside,
major tick length=2pt,
ymin=0.0,
ymax=1.1,
ytick={0},
yticklabels={},
xmin=38,xmax=61,
xtick={44.72-2, 44.72,44.72+2,54.79-2, 54.79,54.79+2},
xticklabels={},
every tick label/.append style={font=\tiny},
xlabel=\small $\|x-y\|$,
every axis x label/.style={
    at={(ticklabel* cs:1.07)},
    anchor=west,
},
every axis y label/.style={
    at={(ticklabel* cs:1.2)},
    anchor=north,
},
]

\addplot [domain=38:61, samples=150,fill=black,fill opacity=0.05]{0.9*exp(-(x-44.72)^2/1.5)};
\addplot [domain=38:61, samples=150,fill=black,fill opacity=0.05]{exp(-(x-54.79)^2/1.5)};
\end{axis}

\begin{picture}(0,0)
\draw [line width=0.25mm] (1.41,-0.025) -- (2.595,-0.025);
\draw [line width=0.25mm] (4.41,-0.025) -- (5.598,-0.025);
\node at (2,-0.3) {\tiny$\Bbar(\sqrt{2d},\epsilon_1)$};
\node at (5,-0.3) {\tiny$\Bbar(\sqrt{\hspace{-1.5pt}\Delta^2\hspace{-1.5pt}+\hspace{-1.5pt}2d},\epsilon_2)$};
\end{picture}
\end{tikzpicture}\nopagebreak[4]\begin{fig}\label{SEP-FIG-2}Picture of the distribution of mutual distances\\ accumulating around $\sqrt{2d}$ and $\sqrt{\Delta^2+2d}$.\end{fig}
\end{center}

To keep notation short let us abbreviate $x\sim y$ for the case that $x$ and $y$ come from the same Gaussian, i.e., if $x,\,y\in S_1$ or $x,\,y\in S_2$. Consequently, $x\not\sim y$ means $x\in S_1, y\in S_2$ or $x\in S_2,\,y\in S_1$. We use Bayes Theorem to compute the conditional probability
$$
\P\bigl[\hspace{1pt}x\sim y\:\big|\:\bigl|\|x-y\|-\sqrt{2d}\hspace{2pt}\bigr|<\epsilon_1\hspace{1pt}\bigr] = \frac{\P\bigl[\hspace{1pt}\bigl|\|x-y\|-\sqrt{2d}\hspace{2pt}\bigr|<\epsilon_1 \:\big|\:x\sim y\hspace{1pt}\bigr]\cdot\P[\hspace{1pt}x\sim y\hspace{1pt}]}{\P\bigl[\hspace{1pt}\bigl|\|x-y\|-\sqrt{2d}\hspace{2pt}\bigr|<\epsilon_1\hspace{1pt}\bigr]}=:(\circ)
$$
and continue with the denominator as follows
\begin{align*}
\P\bigl[\hspace{1pt}\bigl|\|x-y\|-\sqrt{2d}\hspace{1pt}\bigr|<\epsilon_1\hspace{1pt}\bigr] & = \P\bigl[\hspace{1pt}\bigl|\|x-y\|-\sqrt{2d}\hspace{1.5pt}\bigr|<\epsilon_1\:\big|\:x\sim y\bigr]\cdot\P[x\sim y]\\
& \hspace{20pt} + \P\bigl[\hspace{1pt}\bigl|\|x-y\|-\sqrt{2d}\hspace{2pt}\bigr|<\epsilon_1\:\big|\:x\not\sim y\bigr]\cdot\P[x\not\sim y]\\
& \leqslant 1\cdot\medfrac{1}{2}+\bigl(1-\P\bigl[\hspace{1pt}\bigl|\|x-y\|-\sqrt{2d}\hspace{2pt}\bigr|\geqslant\epsilon_1\:\big|\:x\not\sim y\bigr]\bigr)\cdot\medfrac{1}{2}\\
& \leqslant\medfrac{1}{2}\cdot\Bigl(1+1-\P\bigl[\hspace{1pt}\bigl|\|x-y\|-\sqrt{\Delta^2+2d}\hspace{2pt}\bigr|<\epsilon_2\:\big|\:x\not\sim y\bigr]\Bigr)\\
& \leqslant\medfrac{1}{2}\cdot\Bigl(1+\P\bigl[\hspace{1pt}\bigl|\|x-y\|-\sqrt{\Delta^2+2d}\hspace{2pt}\bigr|\geqslant\epsilon_2\:\big|\:x\not\sim y\bigr]\Bigr)\\
& \leqslant\medfrac{1}{2}\cdot\Bigl(1+\medfrac{54}{\epsilon_2}+\medfrac{8}{\sqrt{d}}+\medfrac{20}{\epsilon_2\sqrt{d}}\Bigr)\\
\end{align*}
where we employed $\P[x\not\sim y]=\medfrac{1}{2}$, the fact that $\Bbar(\sqrt{2d},\epsilon_1)\cap\Bbar(\sqrt{\Delta^2+2d},\epsilon_2)=\emptyset$ and the estimate from Theorem \ref{SEP-THM-2}. Using $\P[x\sim y] =1/2$ and the estimate from Theorem \ref{THM-distance-of-two-points-sqrt-2d} to treat the numerator of $(\circ)$ we get
\begin{equation}\label{SEP-EQ-1}
\P\bigl[\hspace{1pt}x\sim y\;\big|\;\bigl|\|x-y\|-\sqrt{2d}\hspace{2pt}\bigr|<\epsilon_1\bigr] \geqslant \frac{1-\frac{18}{\epsilon_1}-\frac{8}{\sqrt{d}}}{1+\frac{54}{\epsilon_2}+\frac{8}{\sqrt{d}}+\frac{20}{\epsilon_2\sqrt{d}}}
\end{equation}
and letting $d\rightarrow\infty$ finishes the proof.
\end{proof}

Notice that in order to apply Theorem \ref{SEP-THM} we need that $\delta=\sqrt{\Delta^2+2d}-\sqrt{2d}\geqslant\epsilon_1+\epsilon_2>72$ holds. Here, $\delta=\delta(d)$ can for example be large and bounded or even tend to infinity. If the latter is the case, we may in \eqref{SEP-EQ-1} consider $\epsilon_2\rightarrow\infty$ as well. We state concrete examples for this below.

\begin{cor}\label{Ch6-COR} Let $\epsilon_1>18$ be constant, $\epsilon_2=d^{\hspace{1pt}\alpha}$ and $\Delta=2d^{\hspace{1pt}1/4+\alpha}$ with $\alpha>0$. Then the assumptions of Theorem \ref{SEP-THM} are met and we have
$$
\liminf_{d\rightarrow\infty}\;\P\bigl[\hspace{1pt}x,\,y \text{ come from same Gaussian}\:\big|\:|\|x-y\|-\sqrt{2d}\hspace{2pt}|<\epsilon_1 \hspace{1pt}\bigr]\geqslant1-\medfrac{18}{\epsilon_1}.
$$
\end{cor} 
\begin{proof} We first consider the case $\alpha\geqslant1/4$. Then by Lemma \ref{delta-LEM}(i) we get that $\delta\geqslant d^{\hspace{1pt}1/2}>\epsilon_1+d^{\hspace{1pt}1/4}$ holds for large $d$. We may thus proceed as in the proof of the previous theorem and at its end we see, since $\epsilon_2\rightarrow\infty$ for $d\rightarrow\infty$, that the limit inferior is bounded from below by $1-18/\epsilon_1$.

\smallskip

Let now $\alpha<1/4$. Then we apply Lemma \ref{delta-LEM}(ii) and get that $\delta\geqslant (2^2/4)d^{\hspace{1pt}2\alpha}>\epsilon_1+d^{\hspace{1pt}\alpha}$ for large $d$. From this point on we can proceed as in the first part.
\end{proof}

We conclude this chapter by running the separation algorithm suggested by our previous results with the parameters outlined in Corollary \ref{Ch6-COR}. That is, we consider two $d$-dimensional Gaussians whose centers have distance $\Delta=\Delta(d)$ and sample from each of them 100 points. Then we pick one data point at random and label its 100 nearest neighbors with $0$, the rest with $1$. Finally we check how many of the points with the same label came from the same Gaussian. The charts below show the rate of correctly classified points for different choices of $\Delta$.

\begin{center}
\begin{tikzpicture}
\pgfplotsset{scaled x ticks=false}
	\begin{axis}
[
axis line style={thick, shorten >=-5pt, shorten <=-2pt},
y=100pt,
x=0.0095pt,
axis y line=left,
axis x line=middle,
axis line style={->},
no markers,
tick align=outside,
major tick length=2pt,
ymin=0.5,
ymax=1.05,
ytick={0.5,0.6,...,1},
xmin=200,xmax=10000,
xtick={500,3000,6000,9000},
every tick label/.append style={font=\tiny},
xlabel=$\scriptstyle d$,
every axis x label/.style={
    at={(ticklabel* cs:1.07)},
    anchor=west,
},
every axis y label/.style={
    at={(ticklabel* cs:1.2)},
    anchor=north,
},
]
\addplot[ mark=none] coordinates {
(200,0.9116)
(400,0.8375)
(600,0.7855)
(800,0.7568)
(1000,0.7363)
(1200,0.7186)
(1400,0.7168)
(1600,0.6928)
(1800,0.6834)
(2000,0.6739)
(2200,0.6674)
(2400,0.6626)
(2600,0.6518)
(2800,0.6572)
(3000,0.6473)
(3200,0.6441)
(3400,0.6402)
(3600,0.6278)
(3800,0.6246)
(4000,0.6363)
(4200,0.6227)
(4400,0.6271)
(4600,0.6276)
(4800,0.617)
(5000,0.6092)
(5200,0.616)
(5400,0.6101)
(5600,0.6154)
(5800,0.6001)
(6000,0.6085)
(6200,0.6043)
(6400,0.6131)
(6600,0.6094)
(6800,0.5991)
(7000,0.5953)
(7200,0.5945)
(7400,0.5915)
(7600,0.6011)
(7800,0.5933)
(8000,0.5976)
(8200,0.5851)
(8400,0.5952)
(8600,0.59)
(8800,0.5862)
(9000,0.5967)
(9200,0.5841)
(9400,0.5869)
(9600,0.5844)
(9800,0.5852)
(10000,0.5855)
	};
	\end{axis}\end{tikzpicture}
\begin{tikzpicture}
\pgfplotsset{scaled x ticks=false}
	\begin{axis}
[
axis line style={thick, shorten >=-5pt, shorten <=-2pt},
y=100pt,
x=0.0095pt,
axis y line=left,
axis x line=middle,
axis line style={->},
no markers,
tick align=outside,
major tick length=2pt,
ymin=0.5,
ymax=1.05,
ytick={0.5,0.6,...,1},
xmin=200,xmax=10000,
xtick={500,3000,6000,9000},
every tick label/.append style={font=\tiny},
xlabel=$\scriptstyle d$,
every axis x label/.style={
    at={(ticklabel* cs:1.07)},
    anchor=west,
},
every axis y label/.style={
    at={(ticklabel* cs:1.2)},
    anchor=north,
},
]
\addplot[ mark=none] coordinates {
(200,0.7695)
(400,0.7908)
(600,0.7926)
(800,0.7838)
(1000,0.7841)
(1200,0.7817)
(1400,0.7886)
(1600,0.7878)
(1800,0.7854)
(2000,0.7843)
(2200,0.7859)
(2400,0.7805)
(2600,0.7932)
(2800,0.7868)
(3000,0.7902)
(3200,0.79)
(3400,0.7928)
(3600,0.7929)
(3800,0.7923)
(4000,0.7967)
(4200,0.7872)
(4400,0.7905)
(4600,0.8006)
(4800,0.7874)
(5000,0.7924)
(5200,0.7962)
(5400,0.79)
(5600,0.7934)
(5800,0.791)
(6000,0.7868)
(6200,0.7917)
(6400,0.7889)
(6600,0.7886)
(6800,0.7901)
(7000,0.7881)
(7200,0.7882)
(7400,0.79)
(7600,0.7991)
(7800,0.7912)
(8000,0.8005)
(8200,0.7855)
(8400,0.796)
(8600,0.7889)
(8800,0.7916)
(9000,0.7919)
(9200,0.7882)
(9400,0.7933)
(9600,0.7809)
(9800,0.7919)
(10000,0.7852)
	};
	\end{axis}\end{tikzpicture}
\begin{tikzpicture}
\pgfplotsset{scaled x ticks=false}
	\begin{axis}
[
axis line style={thick, shorten >=-5pt, shorten <=-2pt},
y=100pt,
x=0.0095pt,
axis y line=left,
axis x line=middle,
axis line style={->},
no markers,
tick align=outside,
major tick length=2pt,
ymin=0.5,
ymax=1.05,
ytick={0.5,0.6,...,1},
xmin=200,xmax=10000,
xtick={500,3000,6000,9000},
every tick label/.append style={font=\tiny},
xlabel=$\scriptstyle d$,
every axis x label/.style={
    at={(ticklabel* cs:1.07)},
    anchor=west,
},
every axis y label/.style={
    at={(ticklabel* cs:1.2)},
    anchor=north,
},
]
\addplot[ mark=none] coordinates {
(200,0.8979)
(400,0.9086)
(600,0.9266)
(800,0.9301)
(1000,0.9345)
(1200,0.9456)
(1400,0.9465)
(1600,0.9458)
(1800,0.9565)
(2000,0.952)
(2200,0.9595)
(2400,0.958)
(2600,0.9612)
(2800,0.9581)
(3000,0.9609)
(3200,0.9651)
(3400,0.9633)
(3600,0.968)
(3800,0.9631)
(4000,0.9659)
(4200,0.9647)
(4400,0.9672)
(4600,0.9682)
(4800,0.9677)
(5000,0.9664)
(5200,0.9716)
(5400,0.9686)
(5600,0.9727)
(5800,0.9734)
(6000,0.9721)
(6200,0.9715)
(6400,0.9717)
(6600,0.9706)
(6800,0.9725)
(7000,0.9738)
(7200,0.9749)
(7400,0.9744)
(7600,0.9737)
(7800,0.975)
(8000,0.9752)
(8200,0.9751)
(8400,0.9745)
(8600,0.9758)
(8800,0.9771)
(9000,0.9775)
(9200,0.9778)
(9400,0.9774)
(9600,0.977)
(9800,0.9756)
(10000,0.9769)
	};
	\end{axis}

\end{tikzpicture}\nopagebreak[4]\begin{fig}\label{SEP-FIG-2}Rate of correctly classified data points for $\Delta\equiv{}10$, $\Delta=2d^{\hspace{1pt}1/4}$, and $\Delta=2d^{\hspace{1pt}0.3}$. The pictures show the means over 100 experiments for each dimension.\end{fig}
\end{center}

We leave it as Problem \ref{PROBL-6-2} to verify the above and we mention that for $\Delta=2\sqrt{d}$ in experiments the rate of correctly classified points turned out to be one for all dimensions. Finally let us look at the the distribution of mutual distances in a case where $\Delta$ is (too) small.

\begin{center}
\begin{tikzpicture}

	\begin{axis}
[
axis line style={thick, shorten >=-5pt, shorten <=-2pt},
y=0.095pt,
x=9pt,
axis y line=left,
axis x line=middle,
axis line style={->},
no markers,
tick align=outside,
major tick length=2pt,
ymin=0.0,
ymax=680,
ytick={0,200,...,600},
xmin=136,xmax=149,
xtick={136,138, ..., 148},
every tick label/.append style={font=\tiny},
xlabel=$\scriptstyle\|x-y\|$,
every axis x label/.style={
    at={(ticklabel* cs:1.07)},
    anchor=west,
},
every axis y label/.style={
    at={(ticklabel* cs:1.2)},
    anchor=north,
},
]
\addplot[ mark=none,fill=black, 
                    fill opacity=0.05] coordinates {
(137.0,0)
(137.11,0)
(137.22,0)
(137.33,0)
(137.44,0)
(137.55,0)
(137.66,0)
(137.77,0)
(137.88,2)
(137.99,1)
(138.1,1)
(138.21,2)
(138.32,3)
(138.43,4)
(138.54,7)
(138.65,8)
(138.76,17)
(138.87,21)
(138.98,26)
(139.09,28)
(139.2,43)
(139.31,56)
(139.42,79)
(139.53,91)
(139.64,110)
(139.75,130)
(139.86,151)
(139.97,179)
(140.08,209)
(140.19,237)
(140.3,271)
(140.41,290)
(140.52,352)
(140.63,375)
(140.74,371)
(140.85,398)
(140.96,436)
(141.07,446)
(141.18,440)
(141.29,492)
(141.4,511)
(141.51,501)
(141.62,468)
(141.73,481)
(141.84,463)
(141.95,522)
(142.06,495)
(142.17,469)
(142.28,498)
(142.39,498)
(142.5,495)
(142.61,506)
(142.72,482)
(142.83,489)
(142.94,436)
(143.05,501)
(143.16,513)
(143.27,454)
(143.38,450)
(143.49,495)
(143.6,462)
(143.71,472)
(143.82,414)
(143.93,419)
(144.04,368)
(144.15,338)
(144.26,305)
(144.37,295)
(144.48,245)
(144.59,225)
(144.7,221)
(144.81,209)
(144.92,185)
(145.03,120)
(145.14,94)
(145.25,113)
(145.36,91)
(145.47,52)
(145.58,65)
(145.69,59)
(145.8,34)
(145.91,27)
(146.02,27)
(146.13,10)
(146.24,13)
(146.35,14)
(146.46,6)
(146.57,6)
(146.68,5)
(146.79,1)
(146.9,1)
(147.01,0)
(147.12,0)
(147.23,0)
(147.34,0)
(147.45,0)
(147.56,0)
(147.67,0)
(147.78,0)
(147.89,0)
	};
	\end{axis}
\end{tikzpicture}
\begin{tikzpicture}

	\begin{axis}
[
axis line style={thick, shorten >=-5pt, shorten <=-2pt},
y=0.095pt,
x=9pt,
axis y line=left,
axis x line=middle,
axis line style={->},
no markers,
tick align=outside,
major tick length=2pt,
ymin=0.0,
ymax=680,
ytick={0,200,...,600},
xmin=136,xmax=149,
xtick={136,138, ..., 148},
every tick label/.append style={font=\tiny},
xlabel=$\scriptstyle\|x-y\|$,
every axis x label/.style={
    at={(ticklabel* cs:1.07)},
    anchor=west,
},
every axis y label/.style={
    at={(ticklabel* cs:1.2)},
    anchor=north,
},
]
\addplot[ mark=none,fill=black, 
                    fill opacity=0.05] coordinates {
(137.0,0)
(137.11,0)
(137.22,0)
(137.33,2)
(137.44,1)
(137.55,2)
(137.66,0)
(137.77,2)
(137.88,0)
(137.99,1)
(138.1,3)
(138.21,3)
(138.32,3)
(138.43,9)
(138.54,11)
(138.65,4)
(138.76,23)
(138.87,16)
(138.98,24)
(139.09,31)
(139.2,38)
(139.31,44)
(139.42,70)
(139.53,74)
(139.64,97)
(139.75,112)
(139.86,130)
(139.97,179)
(140.08,196)
(140.19,198)
(140.3,273)
(140.41,310)
(140.52,312)
(140.63,354)
(140.74,391)
(140.85,416)
(140.96,408)
(141.07,392)
(141.18,466)
(141.29,428)
(141.4,480)
(141.51,447)
(141.62,433)
(141.73,431)
(141.84,415)
(141.95,366)
(142.06,347)
(142.17,341)
(142.28,321)
(142.39,297)
(142.5,297)
(142.61,277)
(142.72,288)
(142.83,249)
(142.94,228)
(143.05,247)
(143.16,276)
(143.27,272)
(143.38,308)
(143.49,311)
(143.6,351)
(143.71,347)
(143.82,385)
(143.93,412)
(144.04,376)
(144.15,415)
(144.26,426)
(144.37,458)
(144.48,397)
(144.59,450)
(144.7,428)
(144.81,389)
(144.92,425)
(145.03,385)
(145.14,370)
(145.25,303)
(145.36,267)
(145.47,242)
(145.58,222)
(145.69,221)
(145.8,190)
(145.91,143)
(146.02,134)
(146.13,115)
(146.24,91)
(146.35,64)
(146.46,60)
(146.57,48)
(146.68,30)
(146.79,29)
(146.9,21)
(147.01,11)
(147.12,15)
(147.23,8)
(147.34,5)
(147.45,6)
(147.56,2)
(147.67,4)
(147.78,0)
(147.89,0)
	};
	\end{axis}
\end{tikzpicture}
\nopagebreak[4]\begin{fig}\label{SEP-FIG-3a}Mutual distances of points in a set consisting of 100 points sampled from $\mathcal{N}(0,1,\mathbb{R}^{10000})$ and 100 points sampled from $\mathcal{N}((\Delta,0,0,\dots),1,\mathbb{R}^{10000})$ with $\Delta=25,\,30$.\end{fig}
\end{center}

The pictures in Figure \ref{SEP-FIG-3a} show the distribution of mutual distances between 200 points sampled from two different Gaussians in $\mathbb{R}^{1000}$. In the left picture the distance between their means is 25 and in the right picture it is 30. In the experiment that lead to the right picture our algorithm still correctly classified 98\% of the points, whereas in the experiment underlying the left picture we only got 85\% right. Notice that the threshold identified in Corollary \ref{Ch6-COR} is $2d^{\hspace{1pt}1/4}=20$.

\section*{Problems}

\begin{probl}\label{PROBL-6-1} Replicate the results of Figure \ref{SEP-FIG-1}. More precisely, sample 100 points each of two Gaussians in $\mathbb{R}^{1000}$, one centered at zero and the other at $(1,1,\dots)$. Then compute the pairwise distances within the whole sample and plot their distribution.
\end{probl}

\begin{probl}\label{PROBL-6-2} Implement the naive separation algorithm, that picks one data point at random and then labels that half of the data set which is closest to the first point as $0$ and the rest as $1$. Test the algorithm on the data set from Problem \ref{PROBL-6-1}. When generating the data, mark the data points with $0$ and $1$ and after running the separation algorithm, let your code count how many data points got classified correctly.
\end{probl}

\begin{probl} Replicate the results of Figure \ref{SEP-FIG-2}. More precisely, run the code from Problem \ref{PROBL-6-2} for different dimensions $d$ and different distance functions $\Delta=\Delta(d)$, e.g., $\Delta\equiv c>0$, $\Delta=2\sqrt{d}$, $\Delta=d^{\hspace{1pt}0.3}$ or $\Delta=d^{\hspace{1pt}1/4}$. Plot the rate of correctly classified data points as a function of the dimension. Simulate also the case $\Delta=2d^{\hspace{1pt}0.2}$ and confirm that this leads to a low correct classification rate which decreases for large dimensions.

\begin{center}
\begin{tikzpicture}
\pgfplotsset{scaled x ticks=false}
	\begin{axis}
[
axis line style={thick, shorten >=-5pt, shorten <=-2pt},
y=100pt,
x=0.0095pt,
axis y line=left,
axis x line=middle,
axis line style={->},
no markers,
tick align=outside,
major tick length=2pt,
ymin=0.5,
ymax=1.05,
ytick={0.5,0.6,...,1},
xmin=200,xmax=10000,
xtick={500,3000,6000,9000},
every tick label/.append style={font=\tiny},
xlabel=$\scriptstyle d$,
every axis x label/.style={
    at={(ticklabel* cs:1.07)},
    anchor=west,
},
every axis y label/.style={
    at={(ticklabel* cs:1.2)},
    anchor=north,
},
]
\addplot[ mark=none] coordinates {
(200,0.6812)
(400,0.6777)
(600,0.6661)
(800,0.6648)
(1000,0.6502)
(1200,0.6532)
(1400,0.6528)
(1600,0.641)
(1800,0.6525)
(2000,0.6538)
(2200,0.6436)
(2400,0.6478)
(2600,0.649)
(2800,0.6432)
(3000,0.6416)
(3200,0.642)
(3400,0.6418)
(3600,0.6384)
(3800,0.64)
(4000,0.6442)
(4200,0.635)
(4400,0.6389)
(4600,0.6361)
(4800,0.6375)
(5000,0.6283)
(5200,0.6389)
(5400,0.6287)
(5600,0.6403)
(5800,0.6367)
(6000,0.6367)
(6200,0.6385)
(6400,0.6312)
(6600,0.6287)
(6800,0.6355)
(7000,0.6394)
(7200,0.6306)
(7400,0.6334)
(7600,0.6274)
(7800,0.6346)
(8000,0.6268)
(8200,0.6275)
(8400,0.6341)
(8600,0.6364)
(8800,0.6371)
(9000,0.6338)
(9200,0.6288)
(9400,0.6309)
(9600,0.6275)
(9800,0.6333)
(10000,0.634)
	};
	\end{axis}
	\end{tikzpicture}
\nopagebreak[4]\begin{fig}\label{SEP-FIG-4}Average rate of correctly classified data points for $\Delta=2d^{\hspace{1pt}0.2}$.\end{fig}
\end{center}
\end{probl}

\begin{probl} A weak spot of our method above is that the point that we start with could lie somewhere far away from both of the two annuli. Think of ways to improve on this.

\end{probl}

\begin{probl} The table below contains actual heights sampled from adults aged 40-49 years in the U.S. The numbers represent the percentage that has height less than the value in the top row of the same column and larger than the value in the row left of it\footnote{\tiny https://www2.census.gov/library/publications/2010/compendia/statab/130ed/tables/11s0205.pdf}.
\begin{center}\footnotesize
\begin{tabular}{ccccccccccccccccccc}
\toprule
 &\hspace{-2.5pt}147.32\hspace{-2.5pt}&\hspace{-2.5pt}149.86\hspace{-2.5pt}&\hspace{-2.5pt}152.40\hspace{-2.5pt}&\hspace{-2.5pt}154.94\hspace{-2.5pt}&\hspace{-2.5pt}157.48\hspace{-2.5pt}&\hspace{-2.5pt}160.02\hspace{-2.5pt}&\hspace{-2.5pt}162.56\hspace{-2.5pt}&\hspace{-2.5pt}165.10\hspace{-2.5pt}&\hspace{-2.5pt}167.64&\hspace{-2.5pt}170.18\\
\midrule\vspace{1pt}
Women &1.6&3.4&5.8&9.0&11.0&15.2&12.0&14.2&10.8&8.2\\
Men &0&0&0&0&1.9&1.9&1.8&4.2&9.6&10.9\\
\midrule\midrule\vspace{1pt}
&\hspace{-2.5pt}172.72\hspace{-2.5pt}&\hspace{-2.5pt}175.26\hspace{-2.5pt}&\hspace{-2.5pt}177.80\hspace{-2.5pt}&\hspace{-2.5pt}180.34\hspace{-2.5pt}&\hspace{-2.5pt}182.88\hspace{-2.5pt}&\hspace{-2.5pt}185.42\hspace{-2.5pt}&\hspace{-2.5pt}187.96\hspace{-2.5pt}&\hspace{-2.5pt}190.50\hspace{-2.5pt}&\hspace{-2.5pt}193.04\hspace{-2.5pt}&\hspace{-2.5pt}195.58\\
\midrule\vspace{1pt}
Women &3.5&3.1&1.6&0.1&0&0&0&0&0.5&0\\
Men   &10.1&14.0&15.2&9.5&8.3&5.1&5.2&1.3&0.4&0.5\\
\bottomrule
\end{tabular}\nopagebreak[4]
\begin{tab}\label{TAB-6-1}Sample of heights of adult U.S.~citizens.\end{tab}
\end{center}
Try with our algorithm (of which we know that it works in high dimensions) to separate the 1-dimensional data set. 
\end{probl}

\newpage

\bibliographystyle{amsplain}
\bibliography{Ref}

\end{document}